\numberwithin{equation}{section}
\newcommand{\e}{\epsilon}
\newcommand{\de}{\delta}
\newcommand{\br}{\mathbb{R}}
\newcommand{\N}{\mathbb{N}}
\newcommand{\ik}{\varphi}
\newcommand{\pa}{\partial}
\newcommand{\bt}{\beta}
\newcommand{\al}{\alpha}
\newcommand{\la}{\lambda}
\newcommand{\om}{\omega}
\newcommand{\coi}{C_0^{\infty}}
\newcommand{\ioi}{\int_0^{\infty}}
\newcommand{\be}{\begin{equation}}
\newcommand{\ee}{\end{equation}}
\newcommand{\fs}{\tilde\upsilon}
\newcommand{\os}{{(1)}}
\newcommand{\tp}{{(2)}}
\newcommand{\bma}{\begin{pmatrix}}
\newcommand{\ema}{\end{pmatrix}}
\newcommand{\CA}{\mathcal{A}}
\newcommand{\CW}{\mathcal{W}}
\newcommand{\us}{\mathcal U}
\newcommand{\vs}{\mathcal V}
\newcommand{\s}{\mathcal S}
\newcommand{\T}{\mathcal T}
\newcommand{\R}{\mathcal R}
\newcommand{\CF}{\mathcal F}
\newcommand{\Yo}{Y^{(1)}}
\newcommand{\yo}{y^{(1)}}
\newcommand{\yt}{y^{(2)}}
\newcommand{\ytt}{\check y^{(2)}}
\newcommand{\xo}{x^{(1)}}
\newcommand{\Xo}{X^{(1)}}
\newcommand{\xt}{x^{(2)}}
\newcommand{\vy}{\hat y}
\newcommand{\yor}{\tilde y}
\newcommand{\vor}{\tilde\vs}
\newcommand{\Por}{\tilde\Phi}
\newcommand{\gao}{s_0}
\newcommand{\CB}{\mathcal B}
\newcommand{\CBa}{\mathcal{B}_{1d}}
\newcommand{\CE}{\mathcal E}
\newcommand{\bp}{\Theta}
\newcommand{\dsf}{\Delta\text{II}}
\newcommand{\CTB}{\text{CTB}}
\newcommand{\DTB}{\text{DTB}}
\newcommand{\MPsi}{\Upsilon}
\newcommand{\Mpsi}{\phi}
\theoremstyle{theorem}
\newtheorem{theorem}{Theorem}
\newtheorem{lemma}{Lemma}
\newtheorem{definition}{Definition}
\begin{document}

\title[Analysis of resolution]{Resolution analysis of inverting the generalized $N$-dimensional Radon transform in $\br^n$ from discrete data}
\author[A Katsevich]{Alexander Katsevich$^1$}
\thanks{$^1$This work was supported in part by NSF grant DMS-1906361. Department of Mathematics, University of Central Florida, Orlando, FL 32816 (Alexander.Katsevich@ucf.edu). }

\begin{abstract} 
Let $\R$ denote the generalized Radon transform (GRT), which integrates over a family of $N$-dimensional smooth submanifolds $\s_{\yor}\subset\us$, $1\le N\le n-1$, where an open set $\us\subset\br^n$ is the image domain. The submanifolds are parametrized by points $\yor\subset\vor$, where an open set $\vor\subset\br^n$ is the data domain. The continuous data are $g=\R f$, and the reconstruction is $\check f=\R^*\CB g$. Here $\R^*$ is a weighted adjoint of $\R$, and $\CB$ is a pseudo-differential operator. We assume that $f$ is a conormal distribution, $\text{supp}(f)\subset\us$, and its singular support is a smooth hypersurface $\s\subset\us$. Discrete data consists of the values of $g$ on a lattice $\yor^j$ with the step size $O(\e)$. Let $\check f_\e=\R^*\CB g_\e$ denote the reconstruction obtained by applying the inversion formula to an interpolated discrete data $g_\e(\yor)$. Pick a generic pair $(x_0,\yor_0)$, where $x_0\in\s$, and $\s_{\yor_0}$ is tangent to $\s$ at $x_0$. The main result of the paper is the computation of the limit 
$$
f_0(\check x):=\lim_{\e\to0}\e^\kappa \check f_\e(x_0+\e\check x).
$$
Here $\kappa\ge0$ is selected based on the strength of the reconstructed singularity, and $\check x$ is confined to a bounded set. The limiting function $f_0(\check x)$, which we call the discrete transition behavior, allows computing the resolution of reconstruction.  
\end{abstract}
\maketitle

\section{Introduction}\label{sec_intro}

Analysis of resolution of tomographic reconstruction of a function $f$ from its discrete Radon transform data is a practically important problem. Usually, it is solved in the setting of the sampling theory applied to the classical Radon transform that integrates over hyperplanes. The key assumption in this approach is that $f$ is essentially bandlimited \cite{nat93, pal95, far04}. An extension of this theory allows consideration of more general Radon transforms and allows $f$ to have at most semiclassical singularities \cite{stef20}. 

Frequently, one would like to know how accurately and with what resolution the {\it classical} singularities of $f$ (e.g., a jump discontinuity across a smooth surface $\s=\text{sing\hspace{1.0pt}supp}(f)$) are reconstructed. Let $\check f$ denote the reconstruction from continuous data, and $\check f_\e$ denote the reconstruction from discrete data, where $\e$ represents the data sampling rate. In the latter case, interpolated discrete data are substituted into the ``continuous'' inversion formula. In \cite{kat_2017, kat19a, kat20b, kat20a} the author initiated the analysis of reconstruction by focusing specifically on the behavior of $\check f_\e$ near $\s$. One of the main results of these papers is the computation of the limit 
\be\label{tr-beh}
\DTB(\check x):=f_0(\check x):=\lim_{\e\to0}\e^\kappa \check f_\e(x_0+\e\check x).
\ee
Here $x_0\in\s$ is generic (see a more precise definition below), $\kappa\ge0$ is selected based on the strength of the singularity of $\check f$ at $x_0$, and $\check x$ is confined to a bounded set. It is important to emphasize that both the size of the neighborhood around $x_0$ and the data sampling rate go to zero simultaneously in \eqref{tr-beh}. The limiting function $f_0(\check x)$, which we call the discrete transition behavior (or DTB for short), contains complete information about the resolution of reconstruction.  

The results obtained to date can be summarized as follows. Even though we study reconstruction from discrete data, the classification of the cases is based on their continuous analogues. In \cite{kat_2017} we find $f_0(\check x)$ for the Radon transform in $\br^2$ in two cases: $f$ is static and $f$ changes during the scan (dynamic tomography). In the static case the reconstruction formula is exact (i.e., $\check f=f$), and in the dynamic case the reconstruction formula is quasi-exact (i.e., $\check f-f$ is smoother than $f$). In \cite{kat19a} we find $f_0(\check x)$ for the classical Radon transform (CRT) in $\br^3$ assuming the reconstruction is exact and $f$ has jumps. In \cite{kat20a} we consider a similar setting as in \cite{kat19a}, i.e. $f$ has jumps and reconstruction is quasi-exact, but consider a wide family of generalized Radon transforms (GRT) in $\br^3$. Finally, in \cite{kat20b}, the data still comes from the classical Radon transform, but the dimension is increased to $\br^n$, the reconstruction operators are more general, and $f$ may have singularities other than jumps. See Table~\ref{tbl:cases} for a summary of the cases.

\begin{table}
\begin{center}
\begin{tabular}{ |c|c|c|c|c| } 
 \hline
  & RT type & dimension & type of inversion & singularity of $f$ \\ 
\hline
  \cite{kat_2017}  & CRT \& GRT & 2 & exact/quasi-exact & jumps \\ 
 \cite{kat19a} & CRT  & 3 & exact & jumps \\ 
 \cite{kat20a} & GRT & 3 & quasi-exact & jumps \\ 
 \cite{kat20b} & CRT & $n$ & more general & more general \\ 
 \hline
\end{tabular}
\end{center}
\caption{Summary of the cases considered prior to this paper.}
\label{tbl:cases}
\end{table}

Let $\R$ denote the GRT, which integrates over a family of $N$-dimensional smooth submanifolds $\s_{\yor}\subset\us\subset\br^n$, $1\le N\le n-1$. An open set $\us$ represents the image domain. The submanifolds $\s_{\yor}$ are parametrized by points $\yor\subset\vor$, where an open set $\vor\subset\br^n$ is the data domain. Reconstruction from continuous data $g=\R f$ is obtained by $\check f=\R^*\CB g$. Here $\R^*$ is a weighted adjoint of $\R$, which integrates over submanifolds $\tilde\T_x:=\{\yor\in\vor:x\in\s_{\yor}\}$, and $\CB$ is a fairly general pseudo-differential operator ($\Psi$DO). In fact, $g$ does not even have to be the GRT of some $f$. All we need is that $g$ be a sufficiently regular conormal distribution associated with a smooth hypersurface $\Gamma\subset\vs$.

If $g=\R f$, we allow $f$ to have a fairly general singularity across a smooth hypersurface. The choice of $\CB$ determines whether the reconstruction is quasi-exact (i.e., $\check f-f$ is smoother than $f$), preserves the order of singularities of $f$ ($\check f$ and $f$ are in the same Sobolev space), or is singularity-enhancing ($\check f$ is more singular than  $f$). A common example of the latter is Lambda (also known as local) tomography \cite{rk, fbh01}.  

Thus, the setting considered in this paper includes all the cases considered previously \cite{kat_2017, kat19a, kat20a, kat20b}, but is substantially more general than before. In particular, in previous work we always had $N=n-1$. Now, $N$ can be any integer $1\le N\le n-1$. This includes the practically most important  case of cone beam CT: $n=3$ and $N=1$, on which the overwhelming majority of all medical, industrial, and security CT scans are based (see e.g. \cite{kal11, orh20} and references therein). The main result of this paper is the derivation of the DTB \eqref{tr-beh} under these general conditions. We also show that the DTB equals to the convolution of the continuous transition behavior (CTB) with the suitably scaled classical Radon transform of the interpolation kernel. Loosely speaking, the CTB is the continuous analogue of the DTB: 
\be\label{CTB-beh}
\CTB(\check x):=\lim_{\e\to0}\e^\kappa (R^*\CB g)(x_0+\e\check x).
\ee

The operator $\R^*$ can be viewed as a Fourier Integral Operator (FIO), which is associated to a phase function linear in the frequency variables (see Section 2.4 in \cite{hor71} and Section 1.3 in \cite{gs02}):
\be
(\R^*g)(x)=\frac1{(2\pi)^N}\int_{\br^N}\int_{\vor} e^{i\mu\cdot\Psi(x,\yor)}w(x,\yor)g(\yor)d\yor d\mu,
\ee
where $w\in \coi(\us\times\vor)$, and $\Psi\in C^\infty(\us\times\vor)$ is any $\br^{n-N}$ valued function that satisfies some nondegeneracy conditions. Any such $\Psi$ determines a pair $\R$, $\R^*$ by setting $\s_{\yor}=\{x\in\us:\,x=\Por(x,\yor)\}$, $\T_x=\{y\in\vor:\,x=\Por(x,\yor)\}$, and selecting integration weights. As is easily seen, any FIO $F$ with the same phase can be represented in the form $F=\R^*\CB$ for some $\CB$ (at least, microlocally where $\R^*$ is elliptic). Indeed, we can just take $\CB=(\R^*)^{-1}F$, where $(\R^*)^{-1}$ is an approximate inverse of $\R^*$ (modulo regularizing operators). We assume here that an appropriate cut-off is introduced in $(\R^*)^{-1}$, so the composition is well-defined. Then $\CB$ is a $\Psi$DO, and $F-\R^*\CB$ is regularizing. Thus, the reconstruction algorithm is the application of an FIO $F$ with a phase function, which is linear in the frequency variables, to discrete data $g(\yor_j)$. 

Various methods for applying FIOs to discrete data have been proposed, see e.g. \cite{cdy07, cdy09, ahw12, yang15} and references therein. This appears to be the {\it first analysis of resolution} of the reconstructed image $Fg$ for fairly general classes of FIOs $F$ and distributions $g$. Some results along this direction are in \cite{kat20a}. Analyses of such sort are especially important, because most frequently FIO-based reconstruction is designed to accurately recover only the singularities of the unknown original object $f$. Reconstruction of the smooth part of $f$ is usually not accurate even if the data were ideal (i.e., known everywhere). See, for example, Remark 1 in \cite{salo20}. Our approach, which we call local resolution analysis, is well suited to the analysis of such linear recosntruction algorithms because the analysis is localized to an immediate neighborhood of the singularities of $f$. 

A powerful theory, which can be used to study resolution, is semiclassical analysis \cite{zwor12, stef20}. Nevertheless, the development here is novel and not anticipated by that theory. A common thread through our work is that the well-behaved DTB (i.e., the limit in \eqref{tr-beh}) is guaranteed to exist only if a pair $(x_0,\yor_0)\in\us\times\vor$ is generic. Here $x_0\in\s$, and $y_0\in\Gamma:=\text{sing\hspace{1.0pt}supp}(g)$ is the data point from which the singularity of $f$ at $x_0$ is visible, i.e. $\s_{\yor_0}$ is tangent to $\s$ at $x_0$. Roughly, the pair is generic (or, locally generic, to be precise) if in a small neighborhood of $\yor_0$ the sampling lattice $\yor^j$ is in general position relative to a local patch of $\Gamma$ containing $\yor_0$. Equivalently, we can define what it means for $(x_0,\xi_0)\in WF(\check f)$ to be generic, because, generally, $(x_0,\yor_0)$ can be found from $(x_0,\xi_0)$. The property of a pair to be generic is closely related with the uniform distribution theory \cite{KN_06}. 

If $(x_0,\yor_0)$ is not generic, the DTB may be different from the generic one predicted by our theory, and certain non-local artifacts that depend on the shape of $\s$ can appear as well (see e.g. \cite{kat20b}) even if $\R^*\R$ is a $\Psi$DO. These are novel phenomena, which appear to be outside the scope of semiclassical analysis in its present form \cite{zwor12, stef20} (the artifacts described in \cite{stef20} and \cite{kat20b} are different). This shows also that the case of discrete data is more complicated than when the data are continuous, because in the latter case $\text{WF}(\check f)\subset \text{WF}(f)$ whenever $\R^*\R$ is a $\Psi$DO. 

The paper is organized as follows. In Section~\ref{sec:a-prelims} we introduce the generalized Radon transform and its adjoint, the sampling matrix, the sampling lattice $\yor^j$, and fix a pair $(x_0,\yor_0)\in\us\times\vor$ such that $\s_{\yor_0}$ is tangent to $\s$ at $x_0$. In Section~\ref{sec:prelims} we select convenient coordinates both in the data and image domains, state the main geometric assumptions about the Radon transform $\R$ and the shape of $\s$, and state the definition of a generic pair. In Section~\ref{sec:tang} we show that $\T_\s$ and $\T_{x_0}$ are tangent at $y_0$. Here $\T_\s$ is the surface consisting of all $y\in \vs$ such that $\s_y$ is tangent to $\s$. All calculations are done in the new $y$-coordinates, so we drop the tildas in $\vor$, $\yor$, etc. We also look at the distance between $\T_{x_0}$ and $\T_\s$ for points $y$ that satisfy $|y-y_0|=O(\e^{1/2})$, and obtain a local equation for $\T_{x_\e}$, where $|x_\e-x_0|=O(\e)$. Next, in Section~\ref{GRT-behavior} we describe the assumptions about $f$ and obtain convenient formulas for the behavior of $\R f$ near its singular support. Essentially, both $f$ and $\R f$ are conormal distributions (see Section 18.2 in \cite{hor3}) associated with smooth codimension one surfaces $\s$ and $\T_\s$, respectively. The fact that $\R f$ is conormal follows from the calculus of FIOs, see e.g. Section VIII.5 in \cite{trev2}. We present the necessary calculations here, because they are short, make the paper self-contained, and these calculations are used elsewhere in the paper.

In Section~\ref{sec:assns} we formulate the main assumptions that the $\Psi$DO $\CB$, interpolation kernel $\ik$, and the distribution $g$ satisfy. Our goal is to study the DTB determined by $\R^*\CB$ acting on an interpolated version of the discrete data $g(\yor^j)$. The data does not have to be of the form $g=\R f$ for some $f$. Basically, we require that $g$ be a conormal distribution associated with a smooth hypersurface $\Gamma\subset\vs$. If $g=\R f$, then $\Gamma=\T_\s$. We also break up the reconstruction formula into two parts. In the first part, the integration over $\T_{x_\e}$ (which is performed when $\R^*$ is applied) is restricted to a domain centered at $y_0$ and of size $A\e^{1/2}$ for some fixed $A\gg1$. In the second part, integration is over $\T_{x_\e}$ outside the $A\e^{1/2}$ size neighborhood centered at $y_0$. The first and second parts of the reconstruction are denoted $f_\e^{\os}$ and $f_\e^{\tp}$, respectively.

Let $g_\e$ be the interpolated data. In Section~\ref{sec:gprops} we obtain various bounds on $g$, $g_\e$, $g-g_\e$, and their derivatives. In Section~\ref{sec:fplt} we obtain the limit of the first part of the reconstruction $f_\e^{\os}$ as $\e\to0$ assuming that the symbols of both $\CB$ and $g$ contain only their top order terms. Computation of this limit uses that $(x_0,\yor_0)$ is generic and is based on the uniform distribution theory, see \eqref{key-int}--\eqref{lim}. In Section~\ref{sec:fplots} we obtain the limit of $f_\e^{\os}$ as $\e\to0$ assuming that at least one of the symbols, either that of $\CB$ or $g$, does not contain its top order term. In Section~\ref{sec:spdtb} we show that the second part of the reconstruction $f_\e^{\tp}$ does not contribute to the DTB in the limit as $A\to\infty$. In Section~\ref{sec:compdtb} we compute the DTB by considering the limit as $A\to\infty$. We also state our main results there. The CTB is computed in Section~\ref{sec:compctb}, and we establish that the DTB is the convolution of the CTB and the properly scaled classical Radon transform of the interpolation kernel. Finally, proofs of various lemmas are collected in the appendices.

\section{Preliminaries}\label{sec:a-prelims}

Let $\Por(t,\yor)\in C^\infty(\br^N\times\vor)$ be a defining function for the GRT $\R$. Here $t\in\br^N$ is an auxiliary variable that parametrizes smooth manifolds $\s_{\yor}:=\{x\in\us:x=\Por(t,\yor),t\in\br^N\}$ over which $\R$ integrates, an open set $\us\subset\br^n$ is the image domain, $\yor\in\vor$ is the data domain variable, and an open set $\vor\subset\br^n$ is the data domain. The corresponding GRT is given by
\be\label{lead-t-alt-1}
\R f(\yor)= \int_{\s_{\yor}} f(x)b(x,\yor) dx,
\ee
where $dx$ is the volume form on $\s_{\yor}$ and $b\in \coi(\us\times\vor)$. We assume that $f$ is compactly supported, $\text{supp}(f)\subset\us$, and $f$ is sufficiently smooth, so that $\R f(\yor)$ is a continuous function. Exact reconstruction is computed by
\be\label{recon_cont_or}
\check f(x)=(\R^*\CB g)(x)=\int_{\tilde \T_x}(\CB g)(\yor) w(x,\yor)d\yor,\ g=\R f,
\ee
where $w\in \coi(\us\times\vor)$, $d\yor$ is the volume form on $\tilde\T_x:=\{\yor\in\vor:x\in\s_{\yor}\}$, $\R^*$ is a weighted adjoint of $\R$, and $\CB$ is a fairly arbitrary pseudo-differential operator ($\Psi$DO). The reconstruction formula in \eqref{recon_cont_or} is of the Filtered-Backprojection type. Application of $\CB$ is the filtering step, and integration with respect to $\yor$ (i.e., the application of $\R^*$) is the backprojection step.  By reconstruction here we mean any function (or, distribution) $\check f$ that is reconstructed from the data using \eqref{recon_cont_or}. The reconstruction is intended to recover the visible wave-front set of $f$, but the strength of the singularities of $\check f$ and $f$ need not match. 

Let $D$ be a data sampling matrix, $\det D=1$. Discrete data $g(\yor^j)$ are given on a lattice
\be\label{data-pts-or}
\yor^j=\e Dj,\ j\in\mathbb Z^n.
\ee
Reconstruction from discrete data is given by the same formula \eqref{recon_cont_or}, where we replace $g$ with its interpolated version $g_\e$.

We assume that $\s:=\text{sing\hspace{1.0pt}supp}(f)$ is a smooth hypersurface. Pick some $x_0\in\s$. This point is fixed throughout the paper. Our goal is to study the function reconstructed from discrete data in a neighborhood of $x_0$. All our results are local, so we assume that $\us$ is a sufficiently small neighborhood of $x_0$. Let $\yor_0\in\vor$ be such that $\s_{\yor_0}$ is tangent to $\s$ at $x_0$. Only a small neighborhood of $\yor_0$ is relevant for the recovery of the singularity of $f$ at $x_0$. Hence we assume that $\vor$ is a sufficiently small neighborhood of $\yor_0$.

\section{Selecting coordinates, geometric assumptions}\label{sec:prelims}

Later we will select a convenient $y$ coordinate system. Since data points in \eqref{data-pts-or} are given in the original coordinates, we have to keep track of both the original and new coordinates.  Points in the original and new $y$ coordinates are denoted $\yor$ and $y$, respectively. Data domains in the original and new $y$ coordinates are denoted $\vor$ and $\vs$, respectively. Suppose that $\yor=Uy+\yor_0$ and $\vor=U\vs+\yor_0$, where $U$ is an orthogonal matrix $U:\br^n\to\br^n$ to be selected below. 

In what follows we will be using mostly the new $y$ coordinates, so we modify the defining function appropriately:
\be\label{newdeffn}
\Phi(t,y):=\Por(t,\yor(y))=\Por(t,Uy+\yor_0).
\ee
The new $y$ coordinates are selected so that
\be\label{y-coords}
y=\bma y_1\\ y_\perp\ema,\ y_1\in\br,\ y_\perp\in\br^{n-1},\ y_0=\bma 0\\ 0\ema, (\Psi\circ\Phi)_{y_1}=1,\  (\Psi\circ\Phi)_{y_\perp}=0.
\ee
Here $\Psi(x)=0$ is an equation of $\s$, and $d\Psi(x)\not=0$, $x\in\us$. Multiplying $\Psi(y)$ by a constant, we can make sure that $(\Psi\circ\Phi)_{y_1}=1$. For convenience, here and in the rest of the paper we frequently drop the arguments of $\Psi$, $\Phi$, and similar functions whenever they are $x_0$ and $(t_0=0,y_0=0)$, as appropriate. 

Our convention is that a variable in the subscript of a function denotes the partial derivative (or gradient) of the function with respect to that variable (or a group of variables), e.g. $\Phi_{y_1}=\pa_{y_1}\Phi$. 

Suppose $x$ and $t$ coordinates are selected so that 
\be\label{xcoords}\begin{split}
&x=\bma\xo\\ \xt\ema,\ \xo\in\br^{n-N},\ \xt\in\br^N,\ x_0=\bma 0\\0\ema=\Phi(t=0,y_0=0),\\
&d\Psi=(|d\Psi|,0,\dots,0),\ \Phi_t^{(1)}=0,\ \det \Phi_t^{(2)}\not=0.
\end{split}
\ee
We also denote $x_\perp:=(x_2,\dots,x_n)^T$. 
The notation $\Phi_{*}^{(j)}$, $j=1,2$, stands for the derivative of the $j$-th group of coordinates of $x=\Phi(t,y)$ (either along $\xo$ or along $\xt$) with respect to $*=t$ or $y$. Thus, $\s_y\subset\us$ is a smooth $N$-dimensional embedded submanifold for any $y\in\vs$.

Let $\pi$ be the orthogonal projection onto the $(\xo_1,\xt)$ coordinates: $x\to \bma \xo_1\\ \xt \ema$. Here $\xo_1$ (which is the same as $x_1$) denotes the first component of the first group of coordinates $\xo$. We have
\be\label{jacob-sm}\begin{split}
(\Psi\circ\Phi)_{tt}=&(\Psi''\Phi_{t_k}\Phi_{t_j}+\Psi'\Phi_{t_kt_j})_{1\le k,j\le N}
=|d\Psi|\bma\Phi_t^{(2)}\ema^T\dsf\, \Phi_t^{(2)},\\
\dsf:=&\text{II}_{\hat\s_{y_0}}(x_0)-\text{II}_{\hat\s}(x_0).
\end{split}
\ee
Here $\hat\s=\pi\s$, $\hat\s_{y_0}=\pi\s_{y_0}$, and $\text{II}_{\hat\s}(x)$ is the matrix of the second fundamental form of $\hat\s$ at $\pi x\in \hat\s$ written in the coordinates $\bma \xo_1\\ \xt\ema$. 

Let $\T_\s$ be the set, which consists of all $y\in\vs$ such that  $\s_y$ is tangent to $\s$. Similarly, $\T_x$ denotes the set, which consists of all $y\in\vs$ such that  $\s_y$ contains $x$. To find $\T_\s$, we solve the equations 
\be\label{ts-eqns}
\Psi(\Phi(t,y))=0,\ d\Psi(\Phi(t,y))\Phi_t(t,y)=(\Psi\circ\Phi)_t(t,y)=0.
\ee
The Jacobian matrix is
\be\label{jacob-1}
\begin{pmatrix}
(\Psi\circ\Phi)_t & (\Psi\circ\Phi)_y\\
(\Psi\circ\Phi)_{tt} & (\Psi\circ\Phi)_{ty}
\end{pmatrix}.
\ee

To ensure that $\T_\s$ is locally smooth, we should be able to solve \eqref{ts-eqns} for $t$ and $y_1$ in terms of $y_\perp$. Due to  $(\Psi\circ\Phi)_{y_\perp}=0$ (cf. \eqref{y-coords}), this is the only possible split of the $y$ coordinates. Hence, \eqref{jacob-1} and $(\Psi\circ\Phi)_t=0$ lead to the requirement that $\det(\Psi\circ\Phi)_{tt}\not=0$. Then, solving \eqref{ts-eqns} determines $t=T^*(y_\perp)$ and $y_1=Y_1^*(y_\perp)$ as smooth functions of $y_\perp$ in a neighborhood of $y_\perp=0$. In particular, $y_1=Y_1^*(y_\perp)$ is a local equation of the codimension 1 submanifold $\T_\s\subset \vs$. The point of tangency $x^*=\Phi(T^*(y_\perp),(Y_1^*(y_\perp),y_\perp))$ also depends smoothly on $y_\perp$.

To find $\T_{x_0}$, solve $x_0=\Phi(t,y)$ for $t$ and a subset of the $y$ variables. Split the $y$ coordinates further so that 
\be\label{y-split-extra}
y=\bma \yo\\ \yt \ema,\ \yo\in\br^{n-N},\ \yt\in\br^{N},\ \Phi^\os_{\yt}=0,\ \det\Phi_{(t,\yo)}\not=0.
\ee
The property $\det\Phi_{(t,\yo)}\not=0$ does not generally hold, so additional assumptions are required (see assumptions G1 and G2 below). By \eqref{y-coords}, $y_1$ cannot be a part of $\yt$. Thus, similarly to the $x$-coordinates, we make $y_1$ to be the first coordinate of the first group $\yo$, i.e. $y_1=\yo_1$.

Equations \eqref{y-coords} and \eqref{y-split-extra} form a complete set of requirements that the new $y$ coordinates are supposed to satisfy. The required orthogonal matrix $U$ can be found as follows. Let $V_1\Sigma V_2^T$ be the SVD of the Jacobian matrix $\Por_{\yor}^{(1)}(t=0,\yor_0)$. Here $V_1\in O(n-N)$ and $V_2\in O(n)$ are orthogonal matrices, and $\Sigma$ is a rectangular $(n-N)\times n$ matrix with $\Sigma_{ij}=0$, $i\not=j$, and $\Sigma_{ii}>0$,  $1\le i \le n-N$. The latter property follows from assumptions G1, G2, and $\Phi^\os_t=0$, which yield that $\text{rank}\Por^\os_{\yor}=n-N$. Then we can take $U=V_2$. Indeed,
\be\label{U-deriv}
\frac{\pa\Phi^{(1)}}{\pa\yt}=\frac{\pa\Por^{(1)}}{\pa\yor}\frac{\pa\yor}{\pa\yt}
=V_1\Sigma V_2^T V_2^{\tp}=V_1\Sigma\bma 0\\I_N\ema=0.
\ee
Here $V_2^{(2)}$ is the $n\times N$ matrix consisting of the last $N$ columns of $V_2$, and $I_N$ is the $N\times N$ identity matrix.
Likewise,
\be\label{U-deriv-v2}
\frac{\pa\Phi^{(1)}}{\pa\yo}=\frac{\pa\Por^{(1)}}{\pa\yor}\frac{\pa\yor}{\pa\yo}
=V_1\Sigma V_2^T V_2^{\os}=V_1\Sigma\bma I_{n-N}\\0\ema,\
\det \frac{\pa\Phi^{(1)}}{\pa\yo}\not=0.
\ee
Together with $\Phi^\os_t=0$ and $\det\Phi^\tp_t\not=0$ this implies that $\det\Phi_{(t,\yo)}\not=0$ (cf. \eqref{y-split-extra}). 

Solve $x_0=\Phi(t,y)$ for $(t,\yo)$ to get an equation for $\T_{x_0}$ in the form $y=Y_0(\yt)$. The property $\det\Phi_{(t,\yo)}\not=0$ implies that $\T_x\subset\vs$ is an embedded codimension $n-N$ manifold for any $x\in\us$ (recall that both $\us$ and $\vs$ are sufficiently small). 

Introduce the matrix
\be\label{Bolker-matr}
M:=\begin{pmatrix}\Phi_t & \Phi_y\\
(\xi_0\cdot\Phi)_{tt} & (\xi_0\cdot\Phi)_{ty}\end{pmatrix},\ \xi_0:=d_x\Psi\in T_{x_0}^*\us,
\ee
which is the Jacobian matrix for the equations
\be\label{Jacob-Bolker}
\Phi(t,y)=x_0,\ \xi_0\cdot\Phi_t(t,y)=0,
\ee
where $(t,y)\in\br^N\times\vs$ are the unknowns. We assume that $M$ is non-degenerate (this is the Bolker condition). This condition guarantees that any singularity of $f$ microlocally near $(x_0,\xi_0)$ is visible in the GRT data $\R f(y)$, $y\in\vs$. In block form
\be\label{Bolker-block}\begin{split}
&M=\begin{pmatrix}M_{11} & M_{12}\\
M_{21} & M_{22}\end{pmatrix},\ M_{11}\in\br^{n\times n},  M_{12}\in\br^{n\times N},
M_{21}\in\br^{N\times n},  M_{22}\in\br^{N\times N},\\
&M_{11}=\Phi_{(t,\yo)},\ M_{12}=\Phi_{\yt}.
\end{split}
\ee 
In the selected $x$- and $y$-coordinates (see \eqref{xcoords}, \eqref{y-split-extra}), the matrix $M$ becomes
\be\label{M-alt}\begin{split}
&M=\begin{pmatrix} 0 & \Phi^{\os}_{\yo} & 0\\
\Phi^\tp_t & \Phi^{\tp}_{\yo} & \Phi^\tp_{\yt} \\
\xi_0\cdot\Phi_{tt} & \xi_0\cdot\Phi_{t\yo} & \xi_0\cdot\Phi_{t\yt}
\end{pmatrix}.
\end{split}
\ee 
In what follows, we use
\be\label{Theta-def}
\bp:=d_y(\Psi\circ\Phi)=\Phi^*\xi_0,
\ee
i.e. $\bp\in T_{y_0}^*\vs$ is the pull-back of $\xi_0\in T_{x_0}^*\us$ by $\Phi(0,\cdot)$. By \eqref{y-coords}, $\bp=dy_1$. 

To summarize, our main assumptions describing the geometry of the GRT.

{\bf Geometric assumptions.}
\begin{enumerate}
\item[G1.] $\text{rank}\, \Phi_t=N$;
\item[G2.] $\text{rank}\, \Phi_{(t,y)}=n$;
\item[G3.] $\det M\not=0$;
\end{enumerate}
From the above assumptions it follows that 
\be\label{conseqs}
\det \Phi^{\os}_{\yo}\not=0,\ \det M_{11}=\det \Phi_{(t,\yo)}\not=0,\ \det \Phi^{\tp}_t\not=0.
\ee

\begin{definition}\label{def:gen} A pair $(x_0,\yor_0)$, $x_0\in\us$, $\yor_0\in \tilde \T_{x_0}$,
is generic for the sampling matrix $D$ if 
\begin{enumerate}
\item There is no vector $m\in\mathbb Z^n$ such that the 1-form $D^{-T}md\yor$ vanishes identically on the tangent space to $\widetilde\T_{x_0}$ at $\yor_0$, and
\item $\dsf$ is either positive definite or negative definite.
\end{enumerate}
\end{definition}

Equivalently, we can define $(x_0,\xi_0)\in T^*\us$ to be generic if for every $\yor_0\in \tilde \T_{x_0}$ such that $\xi_0$ is conormal to $\s_{\yor_0}$ the pair $(x_0,\yor_0)$ is generic in the sense of  Definition~\ref{def:gen}.

In the rest of the paper, we assume that the pair $(x_0,\yor_0)$ is generic in the sense of  Definition~\ref{def:gen}, and $\dsf$ is {\it negative} definite. The latter assumption is not restrictive, because the positive and negative definite cases can be converted into each other by a change of the $x$ coordinates.

Sometimes we assume that $g$ is a conormal distribution associated with a smooth hypersurface $\Gamma\subset\vs$ regardless of whether there is an $f$ such that $g=\R f$. If there are a pair $(x_0,y_0)\in\us\times\vs$ and $\xi_0\in T_{x_0}^*\us$ such that (a) $\T_{x_0}$ is tangent to $\Gamma$ at $y_0$, and (b) $\xi_0\Phi'=(0,\Theta)$, where $(y_0,\Theta)\in N^*\Gamma$ (here $\Phi':=\Phi_{(t,y)}$ and $N^*\Gamma$ is the conormal bundle of $\Gamma$), then conditions G1--G3 imply that there is a smooth hypersurface $\s\subset\us$ such that $\Gamma=\T_\s$. This surface is found by solving $\xi\Phi'(t,y)=(0,\eta)$ for $\xi$ and $t$ in terms of $(y,\eta)\in N^*\Gamma$ and then setting $x=\Phi(t,y)$, where $t=t(y,\eta)$. The matrix $M$ in \eqref{Bolker-matr} is the Jacobian of the equation, where $(\xi,t)\in (\br^n\setminus\{0\})\times\br^N$ are the unknowns. Once $\s$ is found, a function $\Psi\in C^\infty(\us)$ such that $\Psi(x)=0$ if and only if $x\in\s$ and $\Psi'(x)\not=0$ for any $x\in\us$ can be easily found as well. 

\section{Tangency of $\T_\s$ and $\T_{x_0}$}\label{sec:tang}
In this section we show that $\T_\s$ and $\T_{x_0}$ are tangent at $y_0$, and investigate their properties near the point of tangency.

\begin{lemma}\label{lem:tang_0} The submanifolds $\T_\s$ and $\T_{x_0}$ are tangent at $y_0=0$, and 
$\bp=d_y(\Psi\circ\Phi)$ is conormal to both of them at $y_0=0$.
\end{lemma}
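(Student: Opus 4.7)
The plan is to extract both tangent spaces $T_{y_0}\T_\s$ and $T_{y_0}\T_{x_0}$ by differentiating their defining equations at $(t,y)=(0,0)$, using heavily the normalizations already made in \eqref{y-coords}, \eqref{xcoords}, and \eqref{y-split-extra}. Recall from \eqref{Theta-def} and \eqref{y-coords} that $\bp=d_y(\Psi\circ\Phi)=dy_1$, so it suffices to show that each of the two tangent spaces is contained in the hyperplane $\{\delta y_1=0\}$.

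For $\T_\s$, I would linearize the defining system \eqref{ts-eqns} at $(t_0,y_0)=(0,0)$. A tangent vector $(\delta t,\delta y)$ must satisfy $(\Psi\circ\Phi)_t\delta t+(\Psi\circ\Phi)_y\delta y=0$. Since $\Phi_t^\os=0$ (from \eqref{xcoords}) and $d\Psi=(|d\Psi|,0,\dots,0)$, we have $(\Psi\circ\Phi)_t=0$ at $(0,0)$; combined with $(\Psi\circ\Phi)_{y_1}=1$ and $(\Psi\circ\Phi)_{y_\perp}=0$ from \eqref{y-coords}, the equation collapses to $\delta y_1=0$. Thus the projection of the solution space to the $y$-variables lies in $\{\delta y_1=0\}$, and since $\T_\s$ has codimension $1$ this is precisely $T_{y_0}\T_\s$. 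Hence $\bp=dy_1$ is conormal to $\T_\s$ at $y_0$.

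For $\T_{x_0}$, I would linearize $\Phi(t,y)=x_0$. Splitting into the two blocks of $x$-coordinates and using $\Phi_t^\os=0$ (from \eqref{xcoords}) and $\Phi^\os_{\yt}=0$ (from \eqref{y-split-extra}), the first-group equation reduces to $\Phi^\os_{\yo}\delta\yo=0$, and by \eqref{U-deriv-v2} the matrix $\Phi^\os_{\yo}$ is invertible, forcing $\delta\yo=0$. Since $y_1$ is the first component of $\yo$, this implies $\delta y_1=0$, so the projection of $T_{(0,0)}\{\Phi=x_0\}$ to the $y$-space is contained in $\{\delta y_1=0\}=T_{y_0}\T_\s$. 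This gives simultaneously the tangency of the two submanifolds at $y_0$ and the fact that $\bp$ is conormal to $T_{y_0}\T_{x_0}$.

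There is really no obstacle of substance here; the lemma is essentially a bookkeeping consequence of the coordinate normalizations already in place. The only mildly delicate point is keeping track of which of the identities $\Phi_t^\os=0$, $\Phi^\os_{\yt}=0$, $(\Psi\circ\Phi)_{y_\perp}=0$, and $\det\Phi^\os_{\yo}\neq 0$ is used where, and noting that the two manifolds have different codimensions ($1$ and $n-N$), so tangency has to be understood as $T_{y_0}\T_{x_0}\subset T_{y_0}\T_\s$ rather than as equality of tangent spaces.
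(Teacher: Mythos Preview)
Your proposal is correct and follows essentially the same approach as the paper: linearize the defining equations of $\T_\s$ and $\T_{x_0}$ at $(t,y)=(0,0)$, use the normalizations \eqref{y-coords}, \eqref{xcoords}, \eqref{y-split-extra} to read off $T_{y_0}\T_\s=\{\delta y_1=0\}$ and $T_{y_0}\T_{x_0}=\{\delta\yo=0\}$, and conclude the inclusion $T_{y_0}\T_{x_0}\subset T_{y_0}\T_\s$ with $\bp=dy_1$ conormal to both. The only cosmetic difference is that the paper phrases this as differentiating the parametrizations $y_1=Y_1^*(y_\perp)$ and $\yo=Y_0(\yt)$ rather than linearizing the implicit equations, but the content is identical.
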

\begin{proof}
Begin with $\T_\s$. Viewing $t$ and $y_1$ as functions of $y_\perp$, differentiating the equation $(\Psi\circ\Phi)(t,y)=0$ (cf. \eqref{ts-eqns}) gives:
\be\label{normal-dir-prf}
d\Psi\cdot(\Phi_t t_{y_\perp}+\Phi_{y_1} (y_1)_{y_\perp}+\Phi_{y_\perp})=
(\Psi\circ\Phi)_{y_1} (y_1)_{y_\perp}=0,
\ee
which implies $(y_1)_{y_\perp}=0$. Here we have used that (cf. \eqref{y-coords}, \eqref{xcoords})
\be\label{useful-assns}
(\Psi\circ\Phi)_t=0,\  (\Psi\circ\Phi)_{y_1}\not=0,\  (\Psi\circ\Phi)_{y_\perp}=0.
\ee
Therefore, in the selected $y$ coordinates, the equation of the tangent space $T_{y_0}\T_\s$ (viewed as a subspace of $\br^n$) is $y_1=0$. By the last equation in \eqref{useful-assns},
\be\label{dpsiphi}
0=(\Psi\circ\Phi)_{y_j}=d_y(\Psi\circ\Phi)e_j=\bp e_j,\ j=2,\dots,n,
\ee
which implies that $\Theta$ is conormal to $\T_\s$ at $y_0$. Here $e_j$ is the standard $j$-th basis vector in $\br^n$, and $y_\perp=(y_2,\dots,y_n)^T$.

Consider next $\T_{x_0}$. Differentiating $x_0^{\os}=\Phi^\os(t,y)$, where $t=t(\yt)$ and $\yo=Y_0(\yt)$, and using \eqref{xcoords}, \eqref{y-split-extra}, and \eqref{conseqs} gives $\pa \yo/\pa\yt=0$. Hence the equation of the tangent space $T_{y_0}\T_{x_0}$ is $\yo=0$, i.e. $T_{y_0}\T_{x_0}$ is a subspace of $T_{y_0}\T_\s$.
\end{proof}

Next we look more closely at the contact between $\T_{x_0}$ and $\T_\s$. 

\begin{lemma}\label{lem:tang} Let $y=Y_0(\yt)$ be the equation of $\T_{x_0}$. Let $y=Z(\yt)$ be the projection of $Y_0(\yt)$ onto $\T_\s$ along the first coordinate, i.e. $Z(\yt)\in\T_\s$ and $Y_0(\yt)-Z(\yt)=(h(\yt),0,\dots,0)^T$ for a scalar function $h(\yt)$. Then, with $M$ as in \eqref{Bolker-block}, and $\bp$ as in \eqref{Theta-def}, we have
\be\label{YZ-diff}\begin{split}
&\bp\cdot(Y_0(\yt)-Z(\yt))=-\frac12Q\yt\cdot\yt+O(|\yt|^3),\\
&Q=C^T(\Psi\circ\Phi)_{tt}^{-1}C,\ C=M_{22}-M_{21}M_{11}^{-1}M_{12},\ \det C\not=0.
\end{split}
\ee

\end{lemma}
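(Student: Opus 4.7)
The plan is to reduce $\bp\cdot(Y_0-Z)=h(\yt)$ to the evaluation of a convenient defining function for $\T_\s$ along the parametrization $Y_0$ of $\T_{x_0}$, and then Taylor-expand to second order in $\yt$. Since $\bp=dy_1$ (by \eqref{Theta-def} and \eqref{y-coords}) and $Y_0-Z$ is parallel to $e_1$, we have $\bp\cdot(Y_0-Z)=h(\yt)$. With the naive defining function $\alpha(y):=y_1-Y_1^*(y_\perp)$, $h=\alpha\circ Y_0=O(|\yt|^2)$ by Lemma~\ref{lem:tang_0}. A cleaner choice is $\tilde\alpha(y):=(\Psi\circ\Phi)(\tau(y),y)$, where $\tau$ is the smooth local solution of $(\Psi\circ\Phi)_t(\tau(y),y)=0$ produced by the implicit function theorem, using invertibility of $(\Psi\circ\Phi)_{tt}|_0$. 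A short chain-rule check gives $d\tilde\alpha(0)=\bp$; by Hadamard's lemma, $\tilde\alpha=\beta\alpha$ with $\beta(0)=1$, so $\tilde\alpha(Y_0(\yt))=h(\yt)+O(|\yt|^3)$ and it suffices to expand $\tilde\alpha(Y_0(\yt))$.

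Let $(u(\yt),v(\yt))\in\br^N\times\br^{n-N}$ denote the IFT solution of $\Phi(u,(v,\yt))=x_0$, so that $Y_0(\yt)=(v(\yt),\yt)$ and $(\Psi\circ\Phi)(u(\yt),Y_0(\yt))\equiv 0$. Setting $s(\yt):=\tau(Y_0(\yt))$ and Taylor-expanding $(\Psi\circ\Phi)(s,Y_0)$ around $s=u$, then eliminating $s-u$ via $(\Psi\circ\Phi)_t(s,Y_0)=0$, one obtains
\[
\tilde\alpha(Y_0(\yt))=-\tfrac12\,(\Psi\circ\Phi)_t^T\bigl[(\Psi\circ\Phi)_{tt}\bigr]^{-1}(\Psi\circ\Phi)_t\,\Big|_{(u(\yt),Y_0(\yt))}+O(|\yt|^3).
\]
Expanding $(\Psi\circ\Phi)_t(u(\yt),Y_0(\yt))=R\yt+O(|\yt|^2)$ with $R:=(\Psi\circ\Phi)_{tt}|_0\,u'(0)+(\Psi\circ\Phi)_{ty}|_0\,Y_0'(0)$ reduces everything to the identification $R=C$.

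For this last step, differentiate $\Phi(u,(v,\yt))=x_0$ at $\yt=0$: the top $n$ rows of the Bolker matrix give $(u'(0),v'(0))^T=-M_{11}^{-1}M_{12}$, while $Y_0'(0)=(v'(0),I_N)^T$. Substituting, $R=\tilde M_{22}-\tilde M_{21}M_{11}^{-1}M_{12}$, where $\tilde M_{2\ast}$ is the analogue of $M_{2\ast}$ built from $(\Psi\circ\Phi)_{t\ast}|_0$ rather than $(\xi_0\cdot\Phi)_{t\ast}|_0$. A direct chain-rule computation gives $\tilde M_{2\ast}-M_{2\ast}=\Phi_t^T\Psi''(x_0)\,M_{1\ast}$ (at the origin), and these corrections telescope through the Schur complement: $\Phi_t^T\Psi''(x_0)\,M_{12}-\Phi_t^T\Psi''(x_0)\,M_{11}\cdot M_{11}^{-1}M_{12}=0$. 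Hence $R=M_{22}-M_{21}M_{11}^{-1}M_{12}=C$, producing the claimed expansion; $\det C\neq 0$ follows from $\det M=\det M_{11}\det C$ together with (G2),(G3). The main obstacle is precisely this final cancellation: naively, $R$ contains curvature-type cross terms $\Phi_t^T\Psi''(x_0)\Phi_\ast$ that are absent from the Bolker matrix $M$, and one has to notice that the Schur-complement structure removes them exactly.
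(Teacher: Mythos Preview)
Your proof is correct. The route differs from the paper's: you evaluate the scalar defining function $\tilde\alpha(y)=(\Psi\circ\Phi)(\tau(y),y)$ (which is exactly the function $P(y)$ the paper introduces later, in \eqref{Pdef}) along $\T_{x_0}$ and reduce $h(\yt)$ to a critical-value expression $-\tfrac12 A^T(\Psi\circ\Phi)_{tt}^{-1}A$; the paper instead expands both $\T_{x_0}$ and $\T_\s$ parametrically to second order and subtracts the two $y_1$-components directly (equations \eqref{scnd-order}--\eqref{diff}). Both arguments hinge on the same cancellation of the $\Psi''$ cross terms: the paper uses the first-order relation $\Phi_t t+\Phi_y y=0$ on $\T_{x_0}$ to pass from $(\Psi\circ\Phi)_{tt}t+(\Psi\circ\Phi)_{ty}y$ to $\xi_0\cdot(\Phi_{tt}t+\Phi_{ty}y)$, while you phrase the identical cancellation as the vanishing of $\Phi_t^T\Psi''(x_0)(M_{12}-M_{11}M_{11}^{-1}M_{12})$ in the Schur complement. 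Your organization is a bit more conceptual (it foreshadows $P$ and makes the Schur-complement structure explicit), whereas the paper's is more elementary and self-contained; both yield the same $Q$ and the same $\det C\neq 0$ via $\det M=\det M_{11}\det C$.
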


\begin{proof} We solve separately two sets of equations:
\be\label{two-syst}\begin{split}
&\T_{x_0}:\,\Phi(t,y)=0;\\ 
&\T_{\s}:\,(\Psi\circ\Phi)(t,y)=0,\ (\Psi\circ\Phi)_t(t,y)=0.
\end{split}
\ee
Since $(t=0,y=0)$ solves \eqref{two-syst}, we have to first order in $\yt$
\be\label{frst-order}\begin{split}
&\T_{x_0}:\,\Phi_t t+\Phi_yy=0;\\ 
&\T_{\s}:\,(\Psi\circ\Phi)_t \check t+(\Psi\circ\Phi)_y y=0,\ (\Psi\circ\Phi)_{tt}\check t+ (\Psi\circ\Phi)_{ty}y=0.
\end{split}
\ee
The solution to the second system (i.e., related to $\T_\s$) is denoted with a check. Because $\T_{x_0}$ and $\T_\s$ are tangent at $y_0$, $y$ is the same in both solutions to first order in $\yt$. Recall that we search not for the general solution $y\in\T_\s$, but for points $y=Z(\yt)$ obtained by  projecting $Y(\yt)\in\T_{x_0}$ onto $\T_\s$ along $y_1$.
By Lemma~\ref{lem:tang_0}, $y_1=0$. Let $\Delta t$ and $\Delta y=\bma \Delta\yo \\ 0\ema$ denote second order perturbations. Since $\yt$ is an independent variable, its perturbation is not considered. Then, to second order in $\yt$,
\be\label{scnd-order}\begin{split}
&\T_{x_0}:\,\Phi_t \Delta t+\Phi_y\Delta y+\frac12\left(\Phi_{tt}t\cdot t+2\Phi_{ty}t\cdot y+\Phi_{yy}y\cdot y\right)=0;\\ 
&\T_{\s}:\,(\Psi\circ\Phi)_t \Delta \check t+(\Psi\circ\Phi)_y \Delta \check y\\
&\qquad
+\frac12\left((\Psi\circ\Phi)_{tt}\check t\cdot \check t+2(\Psi\circ\Phi)_{ty}\check t\cdot  y+(\Psi\circ\Phi)_{yy} y\cdot  y\right)=0.
\end{split}
\ee
Only the first equation on the second line of \eqref{frst-order} was used. Using \eqref{useful-assns}, the last equation in \eqref{two-syst}, and the first equation in \eqref{frst-order} in \eqref{scnd-order} gives
\be\label{scnd-order-2}\begin{split}
\T_{x_0}:\,(d\Psi\cdot\Phi_{y_1})\Delta y_1=&-\frac12\left((d\Psi\cdot\Phi_{tt})t\cdot t+2(d\Psi\cdot\Phi_{ty})t\cdot y+(d\Psi\cdot\Phi_{yy})y\cdot y\right)\\ 
=&-\frac12\left((\Psi\circ\Phi)_{tt} t\cdot  t+2(\Psi\circ\Phi)_{ty} t\cdot  y+(\Psi\circ\Phi)_{yy} y\cdot  y\right);\\
\T_{\s}:\,(d\Psi\cdot\Phi_{y_1}) \Delta \check y_1=&
-\frac12\left((\Psi\circ\Phi)_{tt}\check t\cdot \check t+2(\Psi\circ\Phi)_{ty}\check t\cdot  y+(\Psi\circ\Phi)_{yy} y\cdot  y\right).
\end{split}
\ee
Subtracting the two equations and using the last equation in \eqref{frst-order} gives
\be\label{diff}
\Delta y_1-\Delta \check y_1=\bp\cdot(\Delta y-\Delta \check y)
=-\frac12(\Psi\circ\Phi)_{tt}(t-\check t)\cdot (t-\check t).
\ee
Using again the last equation in \eqref{frst-order} and then the first:
 \be\label{Phi-dtt}
(\Psi\circ\Phi)_{tt}(t-\check t)=(\Psi\circ\Phi)_{tt}t+(\Psi\circ\Phi)_{ty}y
=\xi_0\cdot (\Phi_{tt}t+\Phi_{ty}y),\ \xi_0=d_x\Psi.
\ee
The map $\yt\to t-\check t$ can be computed explicitly. The equation for $\T_{x_0}$ in \eqref{frst-order} and \eqref{Bolker-block} imply
\be\label{frst-eq}
 \begin{pmatrix}t \\ \yo \end{pmatrix} =-M_{11}^{-1}M_{12} \yt.
\ee 
Therefore, by \eqref{Bolker-matr}, \eqref{Bolker-block}, and \eqref{Phi-dtt}
\be\label{Phi-dtt-2nd}
(\Psi\circ\Phi)_{tt}(t-\check t)
=\left(M_{22}-M_{21}M_{11}^{-1}M_{12}\right)\yt=C\yt.
\ee
As is easily checked, 
\be\label{M-deriv}
M\bma M_{11}^{-1} & 0\\ 0 & I\ema \bma I & -M_{12} \\ 0 & I\ema=\bma I & 0\\ * & C\ema,
\ee
where $I$ denotes identity matrices of various sizes. Therefore, $\det M/\det M_{11}=\det C$. Assumption G3 and \eqref{conseqs} imply $\det C\not=0$. All the assertions in \eqref{YZ-diff} now follow.
\end{proof}


Set $x_\e:=x_0+\e\check x$, and let $y=Y(\yt,x_\e)$ be a local equation for $\T_{x_\e}$ in a neighborhood of $y_0=0$. This equation is obtained by solving $\e\check x=\Phi(t,y)$. Suppose $|\check x|=O(1)$ and $|\yt|=O(\e^{1/2})$. The term $\e\check x$ is of a lower order, so the equation for $\T_{x_0}$ in \eqref{frst-order} is accurate on $\T_{x_\e}$ to the order $\e^{1/2}$. 
The updated version of the top equation in \eqref{scnd-order} becomes
\be\label{xe-pert}
M_{11}\bma \Delta t \\ \Delta \yo\ema
+\frac12\left(\Phi_{tt}t\cdot t+2\Phi_{ty}t\cdot y+\Phi_{yy}y\cdot y\right)=\e\check x,
\ee
which is correct to order $\e$. The terms $\frac12(\cdot)$ on the first line of \eqref{scnd-order} and in \eqref{xe-pert} are the same. Therefore, to order $\e$, introduction of the term $\e\check x$ requires only a linear correction compared with $Y_0(\yt)=Y(\yt,x_0)$, and we have
\be\label{frst-order-eps}
Y(\yt,x_\e)=Y_0(\yt)+\e \bma \mathcal P_{\yo}M_{11}^{-1}\check x\\ 0\ema +O(\e^{3/2}),\ 
|\yt|=O(\e^{1/2}),
\ee
where $\mathcal P_{\yo}$ is the projection $\bma t\\ \yo\ema\to\yo$. 
From \eqref{Bolker-block}, $M_{11}^{-1}=\pa (t,\yo)/\pa x$, where $t$ and $\yo$ are functions of $x$ obtained by solving $x=\Phi(t,(\yo,\yt=0))$, the derivative is evaluated at $x=0$, and 
\be\label{PM11}
\mathcal P_{\yo}M_{11}^{-1}=\left.\frac{\pa\Yo(\yt,x)}{\pa x}\right|_{\yt=0,x=0}=\Yo_x. 
\ee

%

\section{Behavior of the GRT near its singular support}\label{GRT-behavior}

To simplify notations, in the rest of the paper we set $b(x,y):=b(x,\yor(y))$, $w(x,y):=w(x,\yor(y))$,  and $g(y):=g(\yor(y))$. The original versions of these functions are used only in Section~\ref{sec:a-prelims}.

Given an open set $U\subset\br^n$, $r\in\br$, and $k\in\N$, by $S^r(U\times\br^k)$ we denote the set of all $h\in C^\infty(U\times(\br^k\setminus \{0\}))$ that satisfy 
\be\label{symbol-ineqs}
|\pa_x^{m}\pa_\xi^{\al} h(x,\xi)|\le c_{m,\al}|\xi|^{r-|\al|},\ x\in U,|\xi|\ge 1,
\ee
for any $m\in\N_0^n$, $\al\in \N_0^k$, $\N_0:=\N\cup\{0\}$, and some $c_{m,\al}$.

Suppose $f\in\CE'(\us)$ is given by
\be\label{f-orig}
f(x)=\frac1{2\pi}\int \tilde f(x,\la)e^{-i\la\Psi(x)}d\la,
\ee
where $\Psi$ is the same as in Sections~\ref{sec:prelims}, \ref{sec:tang}, and $\tilde f$ satisfies
\be\label{f-lim}\begin{split}
&\tilde f(x,\la)= \tilde f^+(x)\la_+^{-(s_0+1)+\frac N2}
+\tilde f^-(x)\la_-^{-(s_0+1)+\frac N2}+\tilde R(x,\la),\ x\in \us,\la\not=0;\\ 
&\pa_x^{m} \tilde f \in L_{loc}^1(\us\times\br),m\in\N_0^n;\, \tilde R\in S^{-(s_1+1)+\frac N2}(\us\times\br),\, \tilde f^\pm\in\coi(\us),\, 0< s_0<s_1.
\end{split}
\ee 
We use the superscripts $'\pm'$ to distinguish between two different functions as opposed to the positive and negative parts of a number. The latter are denoted by the subscripts $'\pm'$: $\la_\pm:=\max(\pm \la,0)$.

The GRT of $f$ is given by
\be\label{GRT-f}\begin{split}
\R f(y) &=\int_{\br^N} f(x) b(x,y)(\det G^{\s}(t,y))^{1/2}dt\\
&=\frac1{2\pi}\int_\br\int_{\br^N} \tilde f(x,\la)e^{-i\la\Psi(x)}b(x,y)(\det G^{\s}(t,y))^{1/2}dt d\la,\ x=\Phi(t,y).
\end{split}
\ee
Here $G^{\s}$ is the Gram matrix
\be\label{gram-1st}
G_{jk}^{\s}(t,y)=\frac{\pa \Phi(t,y)}{\pa t_j}\cdot\frac{\pa \Phi(t,y)}{\pa t_k},\ 
1\le j,k\le N,
\ee
so $(\det G^{\s}(t,y))^{1/2}dt$ is the volume form on $\s_y$.

Consider the second equation for $\T_\s$ in \eqref{two-syst} and solve it for $t$. Since $\det (\Psi\circ\Phi)_{tt} \not=0$, the solution $t^*=t^*(y)$ is a smooth function. The function $t^*(y)$ here is different from $T^*(y_\perp)$ in the paragraph following \eqref{jacob-1}, because now we solve only the second of the two equations that define $\T_\s$. The asymptotics as $\la\to\infty$ of the integral with respect to $t$ in \eqref{GRT-f} is computed with the help of the stationary phase method (see (2.14)--(2.16) in Chapter VIII of \cite{trev2}) 
\be\label{GRT-f-inner}\begin{split}
\int &\tilde f(x,\la)e^{-i\la\Psi(x)}b(x,y)(\det G^{\s}(t,y))^{1/2}dt\\
=&\left(\tilde f(x^*,\la)b(x^*,y)\left|\frac{\det G^{\s}(t^*,y)}{\det (\Psi\circ\Phi)_{tt}(t^*,y)}\right|^{1/2}
\left(\frac{2\pi}{|\la|}\right)^{N/2}+\tilde R(y,\la)\right)\\
&\times e^{-i\frac\pi4\text{sgn}(\la (\Psi\circ\Phi)_{tt}(t^*,y))}
e^{-i\la\Psi(x^*)},\ x^*=\Phi(t^*(y),y),\ \tilde R\in S^{-(s_0+2)}(\vs\times\br).
\end{split}
\ee

Introduce the function
\be\label{Pdef}
P(y):=(\Psi\circ\Phi)(t^*(y),y).
\ee
Then $\R f$ can be written as
\be\label{GRT-f-alt}
\R f(y) =\frac1{2\pi}\int \fs(y,\la)e^{-i\la P(y)}d\la,
\ee
and
\be\label{f-lim-coefs}\begin{split}
\fs(y,\la)=& \fs^+(y)\la_+^{-(s_0+1)}
+\fs^-(y)\la_-^{-(s_0+1)}+\tilde R(y,\la);\
\pa_y^m\tilde v\in L^1_{loc}(\vs\times\br), m\in\N_0^n;\\ 
\fs^\pm\in&\coi(\vs),\ \tilde R\in S^{-\min(s_1+1,s_0+2)}(\vs\times\br),\ 0< s_0<s_1;\\
\fs^\pm(y)=&(2\pi)^{N/2}\tilde f^\pm(x^*)b(x^*,y)\left|\frac{\det G^{\s}(t^*,y)}{\det (\Psi\circ\Phi)_{tt}(t^*,y)}\right|^{1/2} e^{\mp i\frac\pi4\text{sgn}( (\Psi\circ\Phi)_{tt}(t^*,y))}.
\end{split}
\ee 

By construction, $P(y)=0$ is another equation for $\T_\s$. Since $(\Psi\circ\Phi)_t=0$, the first equation for $\T_\s$ in \eqref{frst-order} does not determine $t^*$. Therefore, to first order, $t^*$ is determined by solving the last equation in \eqref{frst-order}:
\be\label{tstar-soln}
t^*(y)=-(\Psi\circ\Phi)_{tt}^{-1}(\Psi\circ\Phi)_{ty}y+O(|y|^2),
\ee
and
\be\label{P-exp}
P(y)=d_y(\Psi\circ\Phi)y +O(|y|^2)=\bp\cdot y+O(|y|^2).
\ee

\section{Setting of the reconstruction problem, main assumptions}\label{sec:assns}

In view of the notation convention stated at the beginning of Section~\ref{GRT-behavior}, the reconstruction is given by 
\be\label{recon_cont}
(\R^*\CB g)(x)=\int_{\T_x}(\CB g)(y) w(x,y)dy,\ g=\R f,
\ee
where $dy$ is the volume form on $\T_x$, $\CB$ is a $\Psi$DO
\be\label{psDO}
(\CB g)(y):=\frac1{(2\pi)^n}\int \tilde B(y,\eta) \tilde g(\eta)e^{-iy\cdot\eta}d\eta,\ 
\tilde g=\CF g,
\ee
and $\CF$ is the Fourier transform in $\br^n$. Using \eqref{data-pts-or} and that $\yor=Uy+\yor_0$, the discrete data $g(\vy^j)$ are known at the points 
\be\label{data-pts}
\vy^j=U^T(\e Dj-\yor_0),\ j\in\mathbb Z^n.
\ee
Reconstruction from discrete data is given by 
\be\label{recon}
\check f_\e(x)=(\R^*\CB g_\e)(x)=\int_{\T_x}(\CB g_\e)(y) w(x,y)dy,
\ee
where $g_\e(y)$ is the interpolated data: 
\be\label{interp-data}
g_\e(y):=\sum_j \ik((y-\vy^j)/\e)g(\vy^j),\ \vy^j=U^T(\e Dj-\yor_0),\ j\in\mathbb Z^n,
\ee
and $\ik$ is an interpolation kernel. Now we state all the assumptions about $\CB$, $\ik$, and $g$.

{\bf Assumptions about $\CB$:}
\begin{itemize} 
\item[$\CB1$.] $\tilde B(y,\eta)\equiv0$ outside a small conic neighborhood of  $(y_0,\bp)$;
\item[$\CB2$.] The amplitude of $\CB$ satisfies
\be\label{B-lim}\begin{split}
&\pa_y^m\tilde B\in L^\infty_{loc}(\vs\times\br^n), m\in\N_0^n;\ \tilde B-\tilde B_0\in S^{\bt_1}(\vs\times\br^n);\\
&\tilde B_0\in C^{\infty}(\vs\times (\br^n\setminus \{0\})),\ \tilde B_0(y,\la\eta)=\la^{\bt_0}\tilde B_0(y,\eta), \la>0;\ \bt_0>\bt_1.
\end{split}
\ee 
\end{itemize}

Recall that $\lfloor r\rfloor$, $r\in\br$, denotes the largest integer not exceeding $r$. Similarly,  $\lceil r\rceil$ denotes the smallest integer greater than or equal to $r$. We also introduce:
\be\label{flclpm}
\lfloor r^-\rfloor=\lim_{\e\to+0}\lfloor r-\e\rfloor=\begin{cases}\lfloor r\rfloor,&r\not\in\mathbb Z,\\ r-1,&r\in\mathbb Z,\end{cases}\ \lceil r^+\rceil=\lim_{\e\to+0}\lceil r+\e\rceil=\begin{cases}\lceil r\rceil,&r\not\in\mathbb Z,\\ r+1,&r\in\mathbb Z.\end{cases}
\ee

{\bf Assumptions about the interpolation kernel $\ik$:}
\begin{itemize}
\item[IK1.]\label{ikcont} $\ik\in C_0^{\lceil \bt_0^+\rceil}(\br^n)$, i.e. $\ik$ is compactly supported and all of its derivatives up to order $\lceil \bt_0^+\rceil$ are bounded;
\item[IK2.] $\ik$ is exact up to order $\lceil \bt_0\rceil$, i.e.
\be\label{exactness}
\sum_{j\in\mathbb Z^n} j^m\ik(u-j)\equiv u^m,\ |m|\le \lceil \bt_0\rceil,\ m\in\N_0^n,\ u\in\br^n.
\ee
\end{itemize}

Assumption IK2 with $m=0$ implies that $\ik$ is normalized:
\be\label{norm-deriv}
1=\int_{[0,1]^n}\sum_j\ik(u-j)du=\int_{\br^n}\ik(u)du.
\ee

Inspired by \eqref{GRT-f-alt}, assume that $g$ is given by
\be\label{g-def}
g(y)=\frac1{2\pi}\int_{\br}\fs(y,\la)e^{-i\la P(y)}d\la,\ |P'(y)|\not=0,\,y\in\vs,
\ee
and $g$ is sufficiently regular. The smooth hypersurface determined by $P$ is
\be\label{surf-def}
\Gamma:=\{y\in\vs:\, P(y)=0\}.
\ee
In terms of our coordinates, $P$ can be selected in the form (cf. \eqref{y-coords}, \eqref{Theta-def},  and \eqref{P-exp})
\be\label{P-expl}
P(y)=y_1-\psi(y_\perp)
\ee
for a smooth $\psi$ that satisfies $\psi(0)=0$ and $\psi'(0)=0$. Thus, $dP(0)=\bp$.

{\bf Assumptions about $g$:}
\begin{itemize}
\item[g1.] $\pa_y^m\fs\in L^1(\vs\times\br)$, $m\in\N_0^n$, and there exists a compact $K\subset\vs$ such that $\fs(y,\la)\equiv0$ if $y\in\vs\setminus K$;
\item[g2.] $\fs$ satisfies
\be\label{g-lim}\begin{split}
&\fs(y,\la)= \fs^+(y)\la_+^{-(s_0+1)}
+\fs^-(y)\la_-^{-(s_0+1)}+\tilde R(y,\la),\\ 
&\fs^\pm\in\coi(\vs),\ \tilde R\in S^{-(s_1+1)}(\vs\times\br),\ 0< s_0<s_1,\ s_1\not\in\N;
\end{split}
\ee 
\item[g3.] $P(y)$ is given by \eqref{P-expl}, where $\psi$ is smooth, $\psi(0)=0$, $\psi'(0)=0$, and $P'(y)\not=0$ on $\vs$;
\item[g4.] If $s_0\in\mathbb N$, one has 
\be\label{s-except}
\fs^+(\bar y)=(-1)^{s_0+1}\fs^-(\bar y),\ \bar y=(\psi(y_\perp),y_\perp)\in \Gamma.
\ee
\end{itemize}
The assumption $s_1\not\in\N$ is not restrictive. It is made to simplify some of the proofs.

Define
\be\label{e-def}
e(a):=\exp\left(i\frac\pi2 a\right).
\ee
Requirements that combine the properties of $\CB$ and $g$ are the following
\begin{itemize}
\item[C1.] One has
\be\label{assump}
\kappa:=\bt_0-s_0-(N/2)\ge 0;
\ee
\item[C2.] One has
\be\label{c-ratios-req}\begin{split}
&\tilde B_0(\bar y,P'(\bar y))\fs^+(\bar y)=-e(2(\bt_0-s_0))\tilde B_0(\bar y,-P'(\bar y))\fs^-(\bar y)\quad \text{if}\quad\kappa=0,\\
&\bar y:=(\psi(y_\perp),y_\perp)\in \Gamma.
\end{split}
\ee
\end{itemize}

%

The meaning of conditions \eqref{s-except} and \eqref{c-ratios-req} is that they prevent the appearance of logarithmic terms in $g$ and $\check f=\R^*\CB g$ in a neighborhood of $\Gamma$ and $\s$, respectively, see \eqref{g-Psi_N} and \eqref{CTB-st3}.


 

Substitute \eqref{interp-data} into \eqref{recon} and use that $g$ is compactly supported:
\be\label{recon-d}
\check f_\e(x_\e)= \int_{y\in\T_{x_\e}}\sum_{|j|\le O(1/\e)} \left(\CB \ik\left(\frac{\cdot-\vy^j}\e\right)\right)(y) g(\vy^j)w(x_\e,y)dy.
\ee
Notation $|j|\le O(1/\e)$ means that $j$ satisfies $\vy^j\in\text{supp}(g)$, and the set of all such $j$ is contained in a ball of radius $O(1/\e)$. Notation $\CB \ik\left(\frac{\cdot-\vy^j}\e\right)$ is understood as follows:
\be\label{B-expl}
\left(\CB \ik\left(\frac{\cdot-\vy^j}\e\right)\right)(y)=(\CB\ik_1)(y),\ \ik_1(z):=\ik\left(\frac{z-\vy^j}\e\right).
\ee

Pick some large $A\gg 1$ and introduce two sets
\be\label{two-sets}\begin{split}
\Omega_1:=&\left\{\yt\in\br^{N}:\, |\yt|\le A\e^{1/2}\right\},\\
\Omega_2:=&\left\{\yt\in\br^{N}:\, |\yt|\ge A\e^{1/2},\ \bma \yo\\ \yt \ema\in\vs\right\}.
\end{split}
\ee
Let $f_\e^{(l)}(x)$ denote the reconstruction obtained using \eqref{recon-d}, where the $y$ integration is restricted to the part of $\T_x$ corresponding to $\Omega_l$, $l=1,2$, respectively.

\section{On some properties of $g$ and $g_\e$}\label{sec:gprops}

By Proposition 25.1.3 in \cite{hor4}, $g$ 
is a conormal distribution with respect to $\Gamma$.
The wave front set of $g$ is contained in the conormal bundle of $\Gamma$: $WF(g)\subset N^*\Gamma=\{(y,\eta)\in \vs\times(\br^n\setminus \{0\}): P(y)=0,\eta=\la P'(y)\}$.  See also Section 18.2 and Definition 18.2.6 in \cite{hor3} for a formal definition and in-depth discussion of conormal distributions. A discussion of closely related Lagrangian distributions is in Section 25.1 of \cite{hor4}.


In this paper we use two types of spaces of continuous functions. First, $C_b^k(\br^n)$, $k\in \mathbb N_0$, is the Banach space of functions with bounded derivatives up to order $k$. The norm in $C_b^k(\br^n)$ is given by
\be\label{ck-def}
\Vert h\Vert_{C_b^k}:=\max_{|m|\le k}\vert h^{(m)}\vert_{L^\infty}.
\ee
The subscript `0' in $C_0^k$ means that we consider the subspace of compactly supported functions, $C_0^k(\br^n)\subset C_b^k(\br^n)$. 

To describe the second space, pick any $\mu_0\in\coi(\br)$ such that $\mu_0(\eta)=1$ for $|\eta|\le 1$, $\mu_0(\eta)=0$ for $|\eta|\ge 2$, and define $\mu_j(\eta):=\mu_0(2^{-j}\eta)-\mu_0(2^{-j+1}\eta)$, $j\in\mathbb N$. Then the Holder-Zygmund space $C_*^r(\br^n)$, $r>0$, is defined as follows
\be\label{hz-sp}
C_*^r(\br^n):=\{h\in C_b^0(\br^n):\,\Vert h\Vert_{C_*^r}<\infty\},\
\Vert h\Vert_{C_*^r}:=\sup_{j\in\mathbb N_0}2^{jr}\Vert \mu_j(d/dp)h\Vert_{L^\infty}.
\ee
If $r\not\in\mathbb Z$, then $C_*^r(\br^n)$ consists of continuous functions, which have Holder continuous $\lfloor r\rfloor$-th order derivatives:
\be\label{holder}
\max_{|m|=\lfloor r\rfloor}\sup_{x\in\br^n,|h|>0}\frac{|f^{(m)}(x+h)-f^{(m)}(x)|}{|h|^{\{r\}}}<\infty.
\ee
Here $\{r\}:=r-\lfloor r\rfloor$ is the fractional part of $r$. The Holder-Zygmund spaces are a particular case of the Besov spaces: $C_*^r(\br^n)=B^r_{p,q}(\br^n)$, where $p,q=\infty$ (see item 2 in Remark 6.4 of \cite{Abels12}). As is easily seen, $C_b^k\subset C_*^k$ if $k\in\mathbb N$.


The following two lemmas are proven in Appendix~\ref{sec:prfPc}.

\begin{lemma}\label{lem:conorm} Suppose $g$ satisfies the assumptions in Section~\ref{sec:assns}. There exist $c_m>0$ such that
\be\label{Psic-ass-g}
|\pa_y^m g(y)|\leq c_m \begin{cases} |P(y)|^{s_0-|m|},&|m|>s_0,\\
1,&|m|\le s_0,\end{cases}
\ m\in\N_0^n,\ y\in\vs\setminus\Gamma.
\ee
Additionally,
\be\label{g-continuity}
g\in C_*^{s_0}(\vs) \text{ and } g\in C_0^{s_0}(\vs) \text{ if }  s_0\in\mathbb N.
\ee
If the leading term in $\fs$ is missing, i.e. $\fs^\pm\equiv0$, then $g\in C_*^{s_1}(\vs)$, and \eqref{Psic-ass-g} holds with $s_0$ replaced by $s_1$. 
\end{lemma}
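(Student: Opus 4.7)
The plan is to reduce the lemma to facts about the one-variable distribution $h(y,p):=\frac1{2\pi}\int \fs(y,\la)e^{-i\la p}d\la$ evaluated at $p=P(y)$. Writing $g(y)=h(y,P(y))$, we can split $h=h^++h^-+h^R$ according to the decomposition \eqref{g-lim} of $\fs$. First I would show that for $s_0\notin\N$ the inverse Fourier transforms of $\la_\pm^{-(s_0+1)}$ (interpreted as tempered distributions via analytic continuation of the family $\chi_\pm^a$, cf.\ Section 3.2 of H\"ormander Vol.\ I) produce homogeneous distributions of degree $s_0$, so
\be\label{planeq1}
h^\pm(y,p)=\alpha_\pm(y)\,p_+^{s_0}+\beta_\pm(y)\,p_-^{s_0},
\ee
with $\alpha_\pm,\beta_\pm\in\coi(\vs)$ linear in $\fs^\pm$. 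Thus $h=c_1(y)p_+^{s_0}+c_2(y)p_-^{s_0}+h^R(y,p)$ for smooth $c_1,c_2$.

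The remainder $h^R$ comes from $\tilde R\in S^{-(s_1+1)}(\vs\times\br)$; by a standard symbol-Fourier argument (integration by parts in $\la$ converts negative order in $\la$ into decay of the $p$-derivative), $h^R(y,\cdot)\in C_*^{s_1}(\br)$ uniformly in $y$, and the same for every $\pa_y^m h^R$. Combined with the pointwise bounds $|\pa_p^k p_\pm^{s_0}|\le c_k|p|^{s_0-k}$ for $k\in\N_0$, $p\neq0$, the chain rule applied to $g(y)=h(y,P(y))$---using that $P$ is smooth with $|P'|\neq0$ by assumption g3---yields the bound \eqref{Psic-ass-g}: each derivative of order $|m|>s_0$ brings powers of $|P(y)|^{s_0-k}$ for $k\le|m|$, the worst of which is $|P(y)|^{s_0-|m|}$, while for $|m|\le s_0$ the expressions stay bounded since $p_\pm^{s_0}$ is $\lfloor s_0\rfloor$ times continuously differentiable and $h^R$ has $s_1>s_0$ derivatives of Zygmund type. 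The Holder-Zygmund statement $g\in C_*^{s_0}$ follows because $p\mapsto p_\pm^{s_0}\in C_*^{s_0}_{\text{loc}}$ (standard, see, e.g., Besov-space characterizations), multiplication by a smooth cutoff and composition with the smooth submersion $P$ preserve $C_*^{s_0}$, and $h^R\in C_*^{s_1}\subset C_*^{s_0}$.

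The main obstacle is the integer case $s_0\in\N$. Here the Fourier inversion of $\la_\pm^{-(s_0+1)}$ is \emph{not} purely homogeneous; it produces an additional $p^{s_0}\log|p|$ component. More precisely, one obtains
\be\label{planeq2}
\frac1{2\pi}\int\la_+^{-(s_0+1)}e^{-i\la p}d\la=A\,p^{s_0}\log|p|+B_+\,p_+^{s_0}+B_-\,p_-^{s_0},
\ee
with analogous formula for $\la_-^{-(s_0+1)}$ having coefficient $(-1)^{s_0+1}A$ in front of the $\log$ term. Condition g4, $\fs^+(\bar y)=(-1)^{s_0+1}\fs^-(\bar y)$ on $\Gamma$, is precisely what makes the combined coefficient of $p^{s_0}\log|p|$ vanish on $\Gamma$; writing that coefficient as $(\fs^+(y)-(-1)^{s_0+1}\fs^-(y))$ and Taylor-expanding around $\Gamma$, one absorbs the vanishing factor into $P(y)$, trading the $\log$ singularity for an extra power of $P$, so the resulting contribution to $g$ is in $C_0^{s_0}(\vs)$. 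Combined with the non-log part, which is polynomial in $p$ modulo the $p_\pm^{s_0}$ pieces that combine into $C^{s_0}$ on the real line (as the jump in the $s_0$-th derivative cancels), this gives $g\in C_0^{s_0}(\vs)$.

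Finally, when $\fs^\pm\equiv0$ only $h^R$ is present and everything above applies with $s_0$ replaced by $s_1$: $h^R(y,\cdot)\in C_*^{s_1}(\br)$ together with $|\pa_p^k h^R(y,p)|\le c_k|p|^{s_1-k}$ for $k>s_1$ (derived by differentiating under the integral and exploiting the symbol bounds on $\tilde R$), so chain rule and composition with $P$ deliver the corresponding bound and the claim $g\in C_*^{s_1}(\vs)$. The hypothesis $s_1\notin\N$ keeps us off the logarithmic threshold for the remainder, which is why the corresponding analog of g4 is not needed.
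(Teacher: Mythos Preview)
Your proof is correct and follows essentially the same route as the paper: both write $g(y)=G(y,P(y))$ via one-dimensional inverse Fourier transform, split off the explicit homogeneous leading terms, invoke standard symbol--kernel bounds (the paper cites Theorem~5.12 in \cite{Abels12}) for the remainder coming from $\tilde R\in S^{-(s_1+1)}$, and apply the chain rule. For the integer case the paper does exactly your Taylor-expansion trick, phrased as combining $\fs^+(\bar y)\la_+^{-(s_0+1)}+\fs^-(\bar y)\la_-^{-(s_0+1)}=\fs^+(\bar y)\la^{-(s_0+1)}$ at $\bar y\in\Gamma$ (whose inverse transform $\tfrac12\tfrac{(-i)^{s_0+1}}{s_0!}p^{s_0}\operatorname{sgn}(p)$ has no $\log$) and absorbing $\fs^\pm(y)-\fs^\pm(\bar y)=O(P(y))$ into the remainder. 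One small correction: the $s_0$-th derivative of $p^{s_0}\operatorname{sgn}(p)$ is $s_0!\operatorname{sgn}(p)$, which is bounded but discontinuous, so your parenthetical ``the jump in the $s_0$-th derivative cancels'' is not the right reason; the conclusion $g\in C_0^{s_0}$ still holds because the paper's $C_b^k$ norm requires only $L^\infty$ bounds on derivatives.
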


\begin{lemma}\label{lem:P_c-ass} Suppose $\CB$ and $g$ satisfy the assumptions in Section~\ref{sec:assns}. There exists $c_\bt>0$ such that
\be\label{Psic-ass-Bg}
|(\CB g)(y)|\leq c_\bt |P(y)|^{s_0-\bt_0},\ y\in\vs\setminus\Gamma.
\ee
If $\kappa=0$, we additionally have
\be\label{Psic-ass-extra}
|(\CB g)(y)|\leq c_\bt P(y)^{s_0-\bt_0+c},\ y\in\vs,\ P(y)>0,
\ee
for some $c>0$.
\end{lemma}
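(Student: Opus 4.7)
The plan is to exhibit $\CB g$ itself as a conormal distribution associated with $\Gamma$ whose conormal order is shifted by $\bt_0$, and then apply Fourier-side estimates analogous to those of Lemma~\ref{lem:conorm}. I would start by substituting \eqref{g-def} into \eqref{psDO} to write
\be
(\CB g)(y)=\frac1{(2\pi)^{n+1}}\iiint\tilde B(y,\eta)\fs(y',\la)e^{-i\la P(y')+i(y'-y)\cdot\eta}\,dy'\,d\eta\,d\la.
\ee
Rescaling $\eta=\la\zeta$ separately on $\pm\la>0$ turns this into an oscillatory integral with large parameter $|\la|$, phase $-P(y')+(y'-y)\cdot\zeta$, and unique nondegenerate stationary point at $y'=y$, $\zeta=P'(y)$. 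The stationary phase expansion produces
\be
(\CB g)(y)=\frac1{2\pi}\int\fs^{\CB}(y,\la)e^{-i\la P(y)}\,d\la+r(y),
\ee
with $r$ smooth, in which $\fs^{\CB}$ satisfies the analogue of \eqref{g-lim} with $(s_0,s_1)$ replaced by $(s_0-\bt_0,s_1-\bt_0)$ and with leading coefficients $\fs^{\CB,\pm}(y)$ proportional (up to fixed phase factors) to $\tilde B_0(y,\pm P'(y))\fs^\pm(y)$.

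From here, \eqref{Psic-ass-Bg} follows from the classical distributional identity
\be
\frac1{2\pi}\int\la_\pm^{a}e^{-i\la p}\,d\la=C^{\pm}_a(p\mp i0)^{-a-1}
\ee
taken with $a=-(s_0-\bt_0+1)$: the leading part contributes $c^+(P(y)-i0)^{s_0-\bt_0}+c^-(P(y)+i0)^{s_0-\bt_0}$, whose modulus is bounded by a multiple of $|P(y)|^{s_0-\bt_0}$, while the subleading amplitude in $\fs^{\CB}$ carries symbol order $s_1-\bt_0<s_0-\bt_0$ and its contribution is controlled by re-running the proof of Lemma~\ref{lem:conorm} in this shifted range.

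For the improvement \eqref{Psic-ass-extra} when $\kappa=0$, so that $s_0-\bt_0=-N/2$, the distributions $(p\mp i0)^{-N/2}$ agree for $p>0$ but differ by conjugate phase factors $e^{\pm i\pi N/2}$ for $p<0$. Hence on $\{P(y)>0\}$ the leading part of $\CB g$ reduces to $(c^++c^-)P(y)^{-N/2}$ with $c^\pm$ proportional to $\tilde B_0(\bar y,\pm P'(\bar y))\fs^\pm(\bar y)$ along $\Gamma$. Hypothesis \eqref{c-ratios-req}, rewritten using $e(2(\bt_0-s_0))=e(N)$, is precisely the relation $c^++c^-=0$ on $\Gamma$. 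The leading $P^{-N/2}$ term therefore drops out for $P>0$, and what remains is the sub-leading contribution of order $|P|^{-N/2+c}$ with $c=\min(s_1-s_0,\bt_0-\bt_1)>0$.

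The principal difficulty will be bookkeeping: tracking the phase factors $C^\pm_a$ and the Maslov-type phases from the stationary-phase expansion, and verifying that together with the constant $e(2(\bt_0-s_0))$ from \eqref{c-ratios-req} they produce the asserted cancellation on $P>0$ (rather than on $P<0$). A minor nuisance is the non-smoothness of $\fs(y,\la)$ at $\la=0$; this is dispatched by splitting off a cutoff $\chi(\la)$ supported in a neighborhood of the origin, whose contribution is smooth in $y$ and absorbed into $r$.
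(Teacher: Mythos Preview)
Your proposal is correct and follows essentially the same route as the paper: substitute \eqref{g-def} into \eqref{psDO}, apply stationary phase in the $(y',\zeta)$ variables (the paper writes $u=\eta/\la$) to obtain $(\CB g)(y)=\CF_{1d}^{-1}(\tilde B_0(y,P'(y))\fs^+(y)\la_+^{\bt_0-s_0-1}+\tilde B_0(y,-P'(y))\fs^-(y)\la_-^{\bt_0-s_0-1}+\tilde R)(P(y))$ with a lower-order remainder, and then read off \eqref{Psic-ass-Bg} from \eqref{Psi_a} and \eqref{g-rmd}-type bounds for $\tilde R$. For the $\kappa=0$ improvement the paper works on the $\la$-side, using \eqref{c-ratios-req} to combine the two leading terms into a single $(\la-i0)^{\bt_0-s_0-1}$, whose inverse Fourier transform is supported on $\{P\le 0\}$; your argument on the $P$-side (that $(P\mp i0)^{-(\bt_0-s_0)}$ coincide for $P>0$ and the resulting coefficient $c^++c^-$ vanishes by \eqref{c-ratios-req}) is the equivalent computation, and your identification $c=\min(s_1-s_0,\bt_0-\bt_1)$ matches the remainder order $c$ in the paper's \eqref{Psi_c-f-FT}.
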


%

Define
\be\label{F-def}\begin{split}
g_\e^{(l)}(y):=&\sum_j\pa_{y_1}^l\ik\left(\frac{y-\vy^j}\e\right)g(\vy^j),\
\Delta g_\e^{(l)}(y):=g_\e^{(l)}(y)-\pa_{y_1}^l g(y),\ 0\le l\le \lceil \bt_0^+\rceil.
\end{split}
\ee
The following two lemmas are proven in Appendix~\ref{sec:prfPc}.

\begin{lemma}\label{lem:gel} Suppose $g$ and $\ik$ satisfy the assumptions in Section~\ref{sec:assns}. There exists $\varkappa_1>0$ such that 
\be\label{F-ineq-lot}
|g_\e^{(l)}(y)| \leq c\begin{cases}
|P(y)|^{s_0-l},& |P(y)|\geq \varkappa_1\e,\ s_0< l\leq \lceil \bt_0^+ \rceil,\\
\e^{s_0-l},& |P(y)|\leq \varkappa_1\e,\ s_0< l\leq \lceil \bt_0^+ \rceil,\\
1,& 0\le l\le s_0,
\end{cases}
\ y\in\vs,
\ee
for some $c>0$. 

If the top order term in $\fs$ is missing, i.e. $\fs^\pm\equiv0$, then \eqref{F-ineq-lot} holds with $s_0$ replaced by $s_1$ as long as $ l\leq \lceil \bt_0^+ \rceil$. 
\end{lemma}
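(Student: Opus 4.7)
The plan is to exploit two ingredients: the polynomial reproduction coming from IK2, and the pointwise derivative bounds on $g$ from Lemma \ref{lem:conorm}. At each $y$, only $O(1)$ terms of the sum defining $g_\e^{(l)}(y)$ are nonzero (since $\ik$ has compact support and the lattice has spacing $O(\e)$), and each derivative $\partial_{y_1}^l\ik((y-\vy^j)/\e)$ contributes an $O(\e^{-l})$ factor. IK2 gives, for any polynomial $p$ of degree $\le \lceil\beta_0\rceil$, the identity $\sum_j \ik((y-\vy^j)/\e)p(\vy^j)=p(y)$; differentiating $l$ times in $y_1$ (allowed by IK1 as long as $l\le\lceil\beta_0^+\rceil$) yields $\sum_j \partial_{y_1}^l\ik((y-\vy^j)/\e)p(\vy^j)=\partial_{y_1}^l p(y)$. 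The whole proof consists in writing $g=T+R$, where $T$ is a Taylor polynomial (chosen so polynomial reproduction cancels the $T$-piece entirely or leaves only $\partial_{y_1}^l g(y)$), and bounding the remainder sum using the available regularity of $g$.

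For $0\le l\le s_0$ the target bound is $O(1)$: the case $l=0$ follows from $g\in L^\infty$ and uniform boundedness of $\sum_j|\ik((y-\vy^j)/\e)|$. For $0<l\le s_0$ I would note that $g\in C^l(\vs)$ (using $g\in C_*^{s_0}$ from Lemma \ref{lem:conorm}, plus g4 when $s_0\in\N$), expand $g$ about $y$ as a Taylor polynomial of degree $l-1\le\lceil\beta_0\rceil$, observe that $\partial_{y_1}^l$ annihilates it, and estimate the integral remainder by $O(\e^l\|g\|_{C^l})$. The $\e^{-l}$ from $\partial_{y_1}^l\ik$ cancels, producing the required $O(1)$ bound.

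For $s_0<l\le \lceil\beta_0^+\rceil$ I split by distance to $\Gamma$. In the \emph{far} regime $|P(y)|\ge\varkappa_1\e$, I pick $\varkappa_1$ large enough that $|P(\vy^j)|\gtrsim |P(y)|$ on the support of $\ik((y-\cdot)/\e)$ (possible because that support has diameter $O(\e)$ and $|P'|$ is bounded). Expanding $g$ about $y$ to degree $l-1\le\lceil\beta_0\rceil$, polynomial reproduction kills the polynomial part, and Lemma \ref{lem:conorm} gives $|\partial^l g|\lesssim |P|^{s_0-l}$ along the segment joining $y$ and $\vy^j$. The remainder is therefore $O(\e^l|P(y)|^{s_0-l})$, which after multiplying by $\e^{-l}$ yields $|g_\e^{(l)}(y)|\lesssim |P(y)|^{s_0-l}$. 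In the \emph{near} regime $|P(y)|\le\varkappa_1\e$, I choose $\tilde y=(\psi(y_\perp),y_\perp)\in\Gamma$, so $|y-\tilde y|=|P(y)|\lesssim\e$ and $|\vy^j-\tilde y|\lesssim\e$; expand $g$ about $\tilde y$ to degree $d=\lfloor s_0^-\rfloor<l$, so $\partial_{y_1}^l$ annihilates the polynomial. The Hölder--Zygmund regularity $g\in C_*^{s_0}$ supplies $|R(\vy^j)|\lesssim |\vy^j-\tilde y|^{s_0}\lesssim\e^{s_0}$, and multiplying by $\e^{-l}$ gives $|g_\e^{(l)}(y)|\lesssim\e^{s_0-l}$.

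The last claim (if $\fs^\pm\equiv0$ then replace $s_0$ by $s_1$) follows by the identical argument, since Lemma \ref{lem:conorm} then delivers $g\in C_*^{s_1}$ and the corresponding pointwise derivative bound. The main technical obstacle is the near-$\Gamma$ remainder estimate at integer $s_0$: the Hölder--Zygmund bound $|R|\lesssim|\vy^j-\tilde y|^{s_0}$ is most cleanly obtained by invoking g4 together with Lemma \ref{lem:conorm} to upgrade $g$ to $C_0^{s_0}$ and then using the integral form of the Taylor remainder. A secondary bookkeeping point is the boundary case $l=\lceil\beta_0^+\rceil=\lceil\beta_0\rceil+1$ when $\beta_0\in\Z$: the Taylor degree $l-1=\lceil\beta_0\rceil$ is still within the exactness range of IK2, so no separate treatment is required.
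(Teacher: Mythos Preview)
Your proposal is correct and follows essentially the same approach as the paper: subtract a Taylor polynomial (which is killed by polynomial reproduction under $\partial_{y_1}^l$), then bound the remainder using the derivative estimates of Lemma~\ref{lem:conorm} in the far regime and the $C_*^{s_0}$ (or $C_0^{s_0}$ when $s_0\in\mathbb N$) regularity in the near regime. The only cosmetic difference is that in the near regime you expand about a point $\tilde y\in\Gamma$ while the paper expands about $y$ itself and handles the H\"older remainder via the modified integral remainder $\tilde R_m$; both choices yield the same $O(\e^{s_0})$ bound.
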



\begin{lemma}\label{lem:delta} Suppose $g$ and $\ik$ satisfy the assumptions in Section~\ref{sec:assns}. Let $\varkappa_1$ be the same as in Lemma~\ref{lem:gel}. One has
\begin{align}\label{F-ineq-1}
&|\Delta g_\e^{(l)}(y)| \leq c\e|P(y)|^{s_0-1-l}, \ y\in\vs,\,|P(y)|\ge \varkappa_1\e,\,\lfloor s_0^-\rfloor\le l\le \lceil \bt_0^+ \rceil,\\
\label{F-ineq-2}
&|\Delta g_\e^{(l)}(y)| \leq c\e^{s_0-l}, \ y\in\vs,\ 0\le l\le \lfloor s_0^-\rfloor,
\end{align}
for some $c>0$. 

If the top order term in $\fs$ is missing, i.e. $\fs^\pm\equiv0$, then \eqref{F-ineq-1}, \eqref{F-ineq-2} hold with $s_0$ replaced by $s_1$ as long as $l\le \lceil\bt_0^+\rceil$. 
\end{lemma}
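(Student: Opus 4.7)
The strategy combines the polynomial exactness IK2 of $\ik$, a Taylor expansion of $g$ about $y$, and the pointwise bounds of Lemma~\ref{lem:conorm}. Write $\psi_j(y):=\ik((y-\vy^j)/\e)$; the compact support of $\ik$ (IK1) restricts the sum defining $g_\e^{(l)}(y)$ to a bounded number of indices with $|y-\vy^j|\le C\e$, and $|\pa_{y_1}^l\psi_j(y)|\le C\e^{-l}$. Differentiating $\sum_j\psi_j(y)(\vy^j-y)^m=\delta_{m,0}$ (valid for $|m|\le\lceil\bt_0\rceil$ by IK2) $l$ times in $y_1$ and inducting on $l$ yields the cancellation identity
\[
\sum_j(\pa_{y_1}^l\psi_j)(y)(\vy^j-y)^m=l!\,\delta_{m,le_1},\qquad |m|\le\lceil\bt_0\rceil,
\]
which is the workhorse of the entire argument.

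To prove \eqref{F-ineq-1} in the principal range $\lfloor s_0^-\rfloor\le l\le\lceil\bt_0\rceil$, the hypothesis $|P(y)|\ge\varkappa_1\e$ combined with $|y-\vy^j|\le C\e$ keeps $|P(z)|\sim|P(y)|$ along the segment from $y$ to $\vy^j$, so Lemma~\ref{lem:conorm} gives $|\pa^{m}g(z)|\le C|P(y)|^{s_0-|m|}$ for $|m|\ge l$. I Taylor-expand $g$ about $y$ to degree $l$ with integral remainder; the identity above reproduces $\pa_{y_1}^l g(y)$ exactly from the polynomial part and so cancels the subtracted term in $\Delta g_\e^{(l)}$, while the remainder contributes at most $C\e^{-l}\cdot\e^{l+1}\cdot|P(y)|^{s_0-l-1}=C\e|P(y)|^{s_0-l-1}$. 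The boundary value $l=\lceil\bt_0\rceil+1$, possible only when $\bt_0\in\mathbb Z$, is the main technical difficulty, as IK2 is one derivative short. Here the plan is to Taylor-expand one order higher, use the exactness identity to annihilate the polynomial of degree $\le l-1$, bound the $|m|=l+1$ piece directly (its sum $\sum_j(\pa_{y_1}^l\psi_j)(\vy^j-y)^m$ carries an extra factor $\e^{|m|-l}=\e$, giving $O(\e|P|^{s_0-l-1})$), and analyze the $|m|=l$ piece via Poisson summation coupled with the Strang--Fix vanishing of $\hat\ik$ at non-zero reciprocal-lattice points: the zero mode reproduces $\pa_{y_1}^l g(y)$ and cancels the subtracted term, while the non-zero modes are controlled through the $C_0^{\lceil\bt_0^+\rceil}$-smoothness of $\ik$ and the scale separation $|P|\ge\varkappa_1\e$ into an $O(\e|P|^{s_0-l-1})$ residual.

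For \eqref{F-ineq-2} with $0\le l\le\lfloor s_0^-\rfloor$, $\pa_{y_1}^l g\in C_*^{s_0-l}$ is globally continuous and Holder of order $s_0-l$ by \eqref{g-continuity}, and \eqref{assump} yields $\lceil s_0\rceil\le\lceil\bt_0\rceil$, so the Taylor expansion of $g$ to degree $l$ fits the exactness range. Writing $\Delta g_\e^{(l)}(y)=\sum_j(\pa_{y_1}^l\psi_j)(y)R_y^l(\vy^j)$ and using the Holder modulus of $\pa_{y_1}^l g$ to bound $|R_y^l(\vy^j)|\le C|\vy^j-y|^{s_0}\le C\e^{s_0}$ yields $|\Delta g_\e^{(l)}(y)|\le C\e^{-l}\cdot\e^{s_0}=C\e^{s_0-l}$. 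The assertion when $\fs^\pm\equiv 0$ follows by repeating all arguments verbatim with $s_0$ replaced by $s_1$, since Lemma~\ref{lem:conorm} extends with the improved regularity. The principal obstacle is the edge case $l=\lceil\bt_0\rceil+1$ described above, where the straightforward exactness argument falls short and a more refined Fourier-analytic estimate is required.
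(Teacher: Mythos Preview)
Your overall strategy---Taylor expansion combined with the polynomial exactness of $\ik$ and the pointwise bounds from Lemma~\ref{lem:conorm}---is exactly the paper's approach, and your argument for \eqref{F-ineq-1} in the range $l\le\lceil\bt_0\rceil$ (expand to order $l$, recover $\pa_{y_1}^l g(y)$ via the exactness identity, bound the order-$(l+1)$ remainder by \eqref{Psic-ass-g}) coincides with what the paper does.

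There is, however, a genuine gap in your proof of \eqref{F-ineq-2}. You expand $g$ only to degree $l$ and then claim $|R_y^l(\vy^j)|\le C|\vy^j-y|^{s_0}$. This is false when $l<\lfloor s_0\rfloor$: the Taylor remainder after the degree-$l$ polynomial is only $O(|\vy^j-y|^{l+1})$, which yields $|\Delta g_\e^{(l)}|=O(\e)$, strictly weaker than the required $O(\e^{s_0-l})$ whenever $s_0-l>1$. The paper avoids this by expanding to the \emph{fixed} order $M=\lfloor s_0\rfloor$, independent of $l$: the exactness identity (valid because $\lfloor s_0\rfloor\le\lceil\bt_0\rceil$) still singles out $\pa_{y_1}^l g(y)$ from the polynomial part and annihilates the rest, while the order-$M$ remainder---modified by subtracting the degree-$M$ Taylor term and invoking the H\"older continuity of $g^{(\lfloor s_0\rfloor)}$ when $s_0\notin\N$---is $O(\e^{s_0})$, giving the correct bound after multiplication by $\e^{-l}$.

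On the edge case $l=\bt_0+1$ (for $\bt_0\in\mathbb Z$): you are right that IK2 is one order short, and in fact the paper's own proof (which takes $M=l+1$ in \eqref{ge-Taylor-v2}) does not cover this value either. Your Poisson/Strang--Fix proposal is not convincing, though: for $|m|=l$ with $m\ne le_1$ the lattice sum $\sum_j(\pa_{y_1}^l\psi_j)(y)(\vy^j-y)^m$ is in general only $O(1)$ (exactness fails at this order), so its contribution is $O(|P(y)|^{s_0-l})$ rather than the stronger $O(\e|P(y)|^{s_0-l-1})$ that \eqref{F-ineq-1} asserts. Fortunately this endpoint is never used elsewhere in the paper---the proof of Lemma~\ref{lem:pPdiff} only needs $l=\lceil\bt_0\rceil$---so there is no need to labor over it.
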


%

\section{Computing the first part of the leading term}\label{sec:fplt}

Throughout this section we assume that $\CB$ in \eqref{psDO} satisfies $\tilde B(y,\eta)\equiv\tilde B_0(y,\eta)$, i.e. we assume that the symbol of $\CB$ contains only the top order term. Let $\CB_0$ denote the $\Psi$DO of the form \eqref{psDO}, where $\tilde B(y,\eta)\equiv\tilde B_0(y_0,\eta)$. Likewise, we assume that the symbol of $g$ coincides with its top order term (cf. \eqref{g-lim}, \eqref{g-FT}, and \eqref{g-Psi_nN})
\be\label{g-lead-sing}
g(y)=a^+(y)P_+^{\gao}(y)+a^-(y)P_-^{\gao}(y),\ a^\pm\in\coi(\vs).
\ee
Combining \eqref{g-lead-sing}, \eqref{recon}, and \eqref{interp-data}, and using that $g$ is compactly supported gives
\be\label{recon-lead-o}\begin{split}
f_\e^{(1)}(x_\e)= \int_{\substack{y\in\T_{x_\e}\\ |\yt|\le A\e^{1/2}}}\sum_{|j|\le O(1/\e)} \left(\CB \ik\left(\frac{\cdot-\vy^j}\e\right)\right)(y) g(\vy^j)w(x_\e,y)dy.
\end{split}
\ee
Notation $|j|\le O(1/\e)$ means that $j$ satisfies $\vy^j\in\text{supp}(g)$, and the set of all such $j$ is contained in a ball of radius $O(1/\e)$.

We begin by investigating the sum in \eqref{recon-lead-o}. The key result is the following lemma (see Appendix~\ref{sec:lem3prf} for the proof).

\begin{lemma}\label{key-sum-lemma}
Suppose $y,z\in\vs$ satisfy
\be\label{zchy}
|y-y_0|\le c\e^{1/2},\ |y-z|\le c\e,\ z\in \Gamma,
\ee
for some $c>0$. One has 
\be\label{I-def-est}\begin{split}
\e^{\bt_0-s_0}&\sum_{|j|\le O(1/\e)} \left(\CB \ik\left(\frac{\cdot-\vy^j}\e\right)\right)(y) \left[a^+(\vy^j)P_+^{\gao}(\vy^j)+a^-(\vy^j)P_-^{\gao}(\vy^j)\right]\\
=&\sum_{j} \CB_0 \ik\left(\frac{y-\vy^j}{\e}\right) \CA\left(\bp\cdot \frac{\vy^j-z}{\e}\right)+O(\e^{\min(s_0,1)/2}),\ \e\to0, 
\end{split}
\ee
where the big-$O$ term is uniform with respect to $z,y$ satisfying \eqref{zchy}, and
\be\label{CA-def}
\CA(p):=a^+(y_0)p_+^{s_0}+a^-(y_0)p_-^{s_0},\ p\in\br.
\ee
Moreover, the left-hand side of \eqref{I-def-est} remains bounded as $\e\to0$ uniformly with respect to $z,y$ satisfying \eqref{zchy}.
\end{lemma}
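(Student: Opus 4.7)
The plan is to expand both factors in the summand around $(y_0,y_0)$ and to extract the natural $\e$-scaling separately from the pseudo-differential action on the rescaled kernel and from the singular factor $P_\pm^{\gao}$. Because $\ik$ is compactly supported, for each fixed $y$ satisfying \eqref{zchy} only $O(1)$ indices $j$ contribute, namely those with $|y-\vy^j|\lesssim\e$; these also satisfy $|\vy^j-y_0|=O(\e^{1/2})$ and $|\vy^j-z|=O(\e)$, which controls every perturbation below.

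\textbf{Step 1 (scaling of $\CB$ on the rescaled kernel).} Writing \eqref{psDO} for $h(u)=\ik((u-\vy^j)/\e)$ and substituting $\xi=\e\eta$ produces
\[
(\CB h)(y)=\frac{1}{(2\pi)^n}\int \tilde B_0(y,\xi/\e)\,\tilde\ik(\xi)\,e^{-i(y-\vy^j)\cdot\xi/\e}\,d\xi.
\]
Assumption $\CB1$ confines the integrand to $|\xi|\gtrsim\e$, and homogeneity gives $\tilde B_0(y,\xi/\e)=\e^{-\bt_0}\tilde B_0(y,\xi)$. Freezing the base point, $|\tilde B_0(y,\xi)-\tilde B_0(y_0,\xi)|\le C|y-y_0|\,|\xi|^{\bt_0}$, and IK1 provides enough decay of $\tilde\ik$ to make the resulting oscillatory integral bounded. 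This yields
\[
(\CB h)(y)=\e^{-\bt_0}(\CB_0\ik)\bigl((y-\vy^j)/\e\bigr)+O(\e^{-\bt_0+1/2}).
\]

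\textbf{Step 2 (linearization of the singular data).} Using $P(z)=0$, $P'(y_0)=\bp$, $|z-y_0|=O(\e^{1/2})$, and $|\vy^j-z|=O(\e)$, Taylor expansion yields $P(\vy^j)=\bp\cdot(\vy^j-z)+O(\e^{3/2})$. The map $p\mapsto p_\pm^{\gao}$ is Lipschitz on bounded sets when $\gao\ge 1$ and H\"older of exponent $\gao$ when $0<\gao<1$; since both arguments are $O(\e)$, this gives
\[
\bigl|P_\pm^{\gao}(\vy^j)-(\bp\cdot(\vy^j-z))_\pm^{\gao}\bigr|=O\!\bigl(\e^{\gao+\min(\gao,1)/2}\bigr),
\]
and $a^\pm(\vy^j)=a^\pm(y_0)+O(\e^{1/2})$ by smoothness of $a^\pm$.

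\textbf{Step 3 (assembly and boundedness).} Multiplying through by $\e^{\bt_0-\gao}$, the leading contributions combine as $\e^{\bt_0-\gao}\cdot\e^{-\bt_0}(\CB_0\ik)(\cdot)\cdot\e^{\gao}\CA(\cdot)$, which is exactly the right-hand side of \eqref{I-def-est}. Each of the three error sources (symbol freezing, replacement of $a^\pm$, linearization of $P$) contributes $O(\e^{\min(\gao,1)/2})$ after the same cancellation, and the sum over the $O(1)$ active indices preserves this bound. Boundedness of the left-hand side reduces to boundedness of $\sum_j\bigl|(\CB_0\ik)((y-\vy^j)/\e)\bigr|\bigl|\CA(\bp\cdot(\vy^j-z)/\e)\bigr|$, which follows from rapid decay of $\CB_0\ik$ off the support of $\ik$ (the Schwartz kernel of $\CB_0$ is smooth off the diagonal, combined with IK1), polynomial growth $|\CA(p)|\le C(1+|p|)^{\gao}$, and bounded density of the lattice $\{(y-\vy^j)/\e\}$.

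The main obstacle is the H\"older case $0<\gao<1$ in Step 2: the lack of Lipschitz regularity of $p_\pm^{\gao}$ forces the $O(\e^{3/2})$ linearization of $P$ to degrade to $O(\e^{3\gao/2})=O(\e^{\gao+\gao/2})$, which matches the advertised $O(\e^{\gao+\min(\gao,1)/2})$ with no slack. A secondary technical point is making the frozen-symbol replacement $\tilde B_0(y,\cdot)\to\tilde B_0(y_0,\cdot)$ quantitatively $O(\e^{1/2})$ despite the oscillatory kernel and the fact that $\tilde B_0$ is only homogeneous away from $\eta=0$; assumption $\CB1$ is what localizes the integral in frequency so these arguments go through.
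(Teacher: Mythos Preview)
Your opening claim—that only $O(1)$ indices $j$ contribute because $\ik$ is compactly supported—is false, and the whole argument rests on it. The object being summed is $\bigl(\CB\ik((\cdot-\vy^j)/\e)\bigr)(y)$, not $\ik((y-\vy^j)/\e)$. Since $\CB$ is a $\Psi$DO of positive order $\bt_0$, it destroys compact support; the Schwartz kernel is homogeneous of degree $-(n+\bt_0)$, so the correct statement is
\[
\bigl|\CB\ik(u)\bigr|\le c\,(1+|u|)^{-(n+\bt_0)},
\]
which you in fact invoke later in Step~3. Thus every index with $|j|\le O(1/\e)$ contributes, and there are $O(\e^{-n})$ of them.

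This breaks Step~2. Your linearization $P(\vy^j)=\bp\cdot(\vy^j-z)+O(\e^{3/2})$ uses $|\vy^j-z|=O(\e)$, which holds only for the handful of indices near $y$. For a generic index the error is $P(\vy^j)-\bp\cdot(\vy^j-z)=\psi(z_\perp)-\psi(\vy^j_\perp)$, and one must exploit $\psi'(0)=0$ together with $|z_\perp|=O(\e^{1/2})$ to get
\[
|\psi(z_\perp)-\psi(\vy^j_\perp)|\le c\bigl(\e^{1/2}+|z_\perp-\vy^j_\perp|\bigr)\,|z_\perp-\vy^j_\perp|,
\]
which is of order $\e\,|j-m|$ (not $\e^{3/2}$) for indices at lattice distance $|j-m|$ from $z$. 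One then feeds this, via the elementary inequalities $|(q+r)_\pm^{s_0}-q_\pm^{s_0}|\le C(|r|^{s_0}+|q|^{s_0-1}|r|)$ (resp.\ $|r|^{s_0}$ when $s_0\le1$), into the sum weighted by $(1+|j-m|)^{-(n+\bt_0)}$; the resulting lattice sums are what produce the global bound $O(\e^{\min(s_0,1)/2})$. Your argument never performs this summation, so the claimed error estimate is unjustified. The replacement $a^\pm(\vy^j)\to a^\pm(y_0)$ and the symbol freezing $\tilde B_0(y,\cdot)\to\tilde B_0(y_0,\cdot)$ likewise require the same weighted-sum analysis, not a pointwise $O(\e^{1/2})$ applied to $O(1)$ terms.
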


On the second line in \eqref{I-def-est}, $\CB_0$ acts with respect to the rescaled variable $\check y=y/\e$. Since $\CB_0$ is shift-invariant, it is not necessary to represent its action in the form \eqref{B-expl}.


The next step is to use \eqref{I-def-est} in \eqref{recon-lead-o}:
\be\label{recon-lead-o1}\begin{split}
&\e^{\bt_0-s_0}f_\e^{(1)}(x_\e)
= J_\e(\check x)+O(\e^{(N+\min(s_0,1))/2}),\\
&J_\e(\check x):=\int_{\Omega_1}\sum_j \CB_0 \ik\left(\frac{Y(\yt,x_\e)-\vy^j}\e\right)
\CA\left(\bp\cdot \frac{\vy^j-Z(\yt)}\e\right)d\yt,\\
&a^\pm:=a^\pm(y_0)w(x_0,y_0)(\det G^{\T})^{1/2},\ x_\e:=x_0+\e\check x,\\
&G_{ij}^{\T}(\yt)=\frac{\pa Y_0(\yt)}{\pa \yt_i}\cdot\frac{\pa Y_0(\yt)}{\pa \yt_j},\ 
1\le i,j\le N,\ Y_0(\yt):=Y(\yt,x_0).
\end{split}
\ee
Here $G^{\T}$ is the Gram matrix, in which all derivatives are evaluated at $\yt=0$, and $Z(\yt)$ is obtained by projecting $Y_0(\yt)$ onto $\Gamma$ along $y_1$, see Lemma~\ref{lem:tang}. To clarify the use of indices in \eqref{recon-lead-o1}, when $\yt$ is viewed as part of $y$, then $\yt_j=y_{n-N+j}$, $1\le j\le N$. Thus, $(\det G^{\T}(\yt))^{1/2}d\yt$ is the volume form on $\T_{x_0}$. By \eqref{frst-order-eps} and Lemma~\ref{lem:tang}, 
\be\begin{split}\label{YZ-diff-appr}
|Y(\yt,x_\e)-Z(\yt)|&=|Y_0(\yt)+O(\e)-Z(\yt)|\\
&=|Y_0(\yt)-Z(\yt)|+O(\e)=O(|\yt|^2)+O(\e)=O(\e), 
\end{split}
\ee
so the conditions in \eqref{zchy} hold, and \eqref{I-def-est} applies. Compared with \eqref{CA-def}, here we modified the coefficients $a^\pm$ of $\CA$ to include the additional factor $w(x_0,y_0)(\det G^{\T})^{1/2}$. This definition is understood in the rest of the paper.

Introduce the operator 
\be\label{oper1d}
\CBa g:=\CF_{1d}^{-1}(\tilde b(\la)\tilde g(\la)),\ \tilde b(\la):=\tilde B_0(y_0,\bp)\la_+^{\bt_0}+\tilde B_0(y_0,-\bp)\la_-^{\bt_0},
\ee
where $g$ is sufficiently smooth and decays sufficiently fast, and $\CF_{1d}$ denotes the 1D Fourier transform. In view of \eqref{interp-data} and \eqref{recon-lead-o1}, introduce two auxiliary functions:
\be\label{pPsi-def}\begin{split}
\Mpsi(\check y,p):=&\sum_{j\in\mathbb Z^n} \CB_0 \ik(\check y-D_1 j)\CA(\bp\cdot D_1 j-p),\
D_1:=U^TD,\\ 
\MPsi(p):=&\int \CBa\hat\ik(\bp,p-q) \CA(q)dq=\CF_{1d}^{-1}(\tilde\ik(\la\bp)\tilde b(\la)\tilde \CA(\la)),\ p\in\br,
\end{split}
\ee
where $\tilde\ik=\CF\ik$, $\tilde\CA=\CF_{1d}\CA$, $\CBa$ acts with respect to the affine variable, and the hat denotes the classical Radon transform that integrates over hyperplanes:
\be\label{crt-def}
\hat\ik(\bp,p):=\int \ik(x)\de(\bp\cdot x-p)dx.
\ee
Both $\tilde b(\la)$ and $\tilde\CA(\la)$ are not smooth at $\la=0$, so the product $\tilde b(\la)\tilde \CA(\la)$ needs to be computed carefully, see the discussion between \eqref{J-ft-v2} and \eqref{DTB-v2}. As is easily checked, 
\be\label{pPsi-props}\begin{split}
&\Mpsi(\check y+D_1 m,p+\bp\cdot D_1 m)=\Mpsi(\check y,p),\ m\in\mathbb Z^n;\\
&\int_{[0,1]^n} \Mpsi(\check y+D_1 u,p+\bp\cdot D_1 u)du=\int_{\br^n} \CB_0 \ik(\check y-u)\CA(\bp\cdot u-p)du\\
&\hspace{5.1cm}=\MPsi(\bp\cdot \check y-p).
\end{split}
\ee
Substitute \eqref{pPsi-def} into \eqref{recon-lead-o1}
\be\label{recon-lead-2}\begin{split}
\e^{\bt_0-s_0}f_\e^{(1)}(x_\e)
= & \int_{\Omega_1}\Mpsi
\left(\frac{Y(\yt,x_\e)+U^T\yor_0}{\e},\bp\cdot\frac{Z(\yt)+U^T\yor_0}{\e}\right)d\yt\\
&+O(\e^{(N+\min(s_0,1))/2}).
\end{split}
\ee
To simplify and evaluate the expression in \eqref{recon-lead-2} we use \eqref{frst-order-eps}, \eqref{PM11}, and the first equation in \eqref{pPsi-props}:
\be\label{recon-lead-22}\begin{split}
&\e^{\kappa} f_\e^{(1)}(x_\e)=O(\e^{\min(s_0,1)/2})\\
&+ \int_{|\ytt|\le A}\Mpsi
\biggl(\Yo_x\check x+D_1 u_\e+O(\e^{1/2}),
\bp\cdot\left(-\frac{Y_0(\yt)-Z(\yt)}{\e}+D_1 u_\e\right)\biggr)d\ytt,
\end{split}
\ee
where
\be\label{nots}
u_\e:=\left\{\frac{D_1^{-1}(U^T\yor_0+Y_0(\yt))}{\e}\right\},\quad 
\yt=\e^{1/2}\ytt,
\ee
and $\{u\}$ denotes the fractional part of a vector (computed componentwise). 

By Lemma~\ref{lem:tang},
\be\label{scndform}
\bp\cdot(Y_0(\yt)-Z(\yt))=-\frac{Q\yt\cdot\yt}2+O(|\yt|^3).
\ee
Therefore,
\be\label{recon-lead-3}\begin{split}
&\e^{\kappa} f_\e^{(1)}(x_\e)
=O(\e^{\min(s_0,1)/2}) \\
&+ \int_{|\ytt|\le A}\Mpsi
\biggl(\Yo_x\check x+D_1 u_\e+O(\e^{1/2}),\frac{Q\ytt\cdot\ytt}2+\bp\cdot D_1 u_\e+O(\e^{1/2})\biggr)d\ytt.
\end{split}
\ee
Introduce an auxiliary function
\be\label{psi1}
\Mpsi_1(\check y,q;u):=\Mpsi(\check y+D_1 u,q+\bp\cdot D_1 u),\ \check y,u\in\br^n,\ q\in\br.
\ee
From \eqref{pPsi-props}, $\Mpsi_1(\check y,q;u+m)=\Mpsi_1(\check y,q;u)$, $m\in\mathbb Z^n$. Thus, the integrand in \eqref{recon-lead-3} can be written in the form:
\be\label{recon-lead-4}
\Mpsi_1\biggl(\Yo_x\check x+O(\e^{1/2}),\frac{Q\ytt\cdot\ytt}2+O(\e^{1/2}); \frac{D_1^{-1}(U^T\yor_0+Y_0(\e^{1/2}\ytt))}{\e}\biggr).
\ee
The assumption $|\yt|=O(\e^{1/2})$ implies
\be\label{Y0exp}
Y_0(\yt)=Y_0'(0)\yt+\frac{Y_0''(0)\yt\cdot\yt}2+O(\e^{3/2}),
\ee
where we have used that $Y_0(0)=0$. To simplify \eqref{recon-lead-4} we use the following result, which is proven in Appendix~\ref{sec:delphi}.

\begin{lemma}\label{lem:psi-incr} Pick any $c$, $0<c<\infty$. One has
\be\label{an-psi-pr-1}\begin{split}
&\Mpsi(\check y+\Delta \check y,p)-\Mpsi(\check y,p)= O(|\Delta \check y|^{1-\{\bt_0\}}),\ \Delta \check y\to0,\ |\check y|,|p|\le c;\\
&\Mpsi(\check y,p+\Delta p)-\Mpsi(\check y,p)= O(|\Delta p|^{\min(s_0,1)}),\ \Delta p\to0,\ |\check y|,|p|\le c;
\end{split}
\ee
and the two big-$O$ terms are uniform in $\check y$ and $p$ confined to the indicated sets. 
\end{lemma}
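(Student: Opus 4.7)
The plan is to prove the two increment estimates separately, each by combining a Hölder-type regularity estimate on one factor of the summand with polynomial decay of the other in order to control the lattice sum. Writing
\[
\Mpsi(\check y+\Delta\check y,p)-\Mpsi(\check y,p)=\sum_{j\in\mathbb Z^n}\bigl[\CB_0\ik(\check y+\Delta\check y-D_1 j)-\CB_0\ik(\check y-D_1 j)\bigr]\CA(\bp\cdot D_1 j-p),
\]
and analogously for the $p$-increment, reduces the claim to termwise bounds plus absolute summability of the tail.

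The key analytic inputs are three. First, since $\ik\in C_0^{\lceil\bt_0^+\rceil}$ by IK1 and $\CB_0$ is the classical $\Psi$DO with constant-in-$y$ symbol $\tilde B_0(y_0,\cdot)$ homogeneous of degree $\bt_0$, the standard Hölder--Zygmund mapping property of $\Psi$DOs (together with $C_b^k\subset C_*^k$ from the paper) gives $\CB_0\ik\in C_*^{1-\{\bt_0\}}$, hence the uniform increment bound
\[
|\CB_0\ik(\check y+\Delta\check y-D_1 j)-\CB_0\ik(\check y-D_1 j)|\le C|\Delta\check y|^{1-\{\bt_0\}}.
\]
Second, compact support of $\ik$ together with repeated integration by parts in $\eta$ in
\[
\CB_0\ik(x)=(2\pi)^{-n}\int e^{ix\cdot\eta}\tilde B_0(y_0,\eta)\tilde\ik(\eta)\,d\eta
\]
yields polynomial decay $|\CB_0\ik(x)|\le C_L(1+|x|)^{-L}$, where $L$ can be taken strictly larger than $n+s_0$ using the $\lceil\bt_0^+\rceil$ derivatives of $\ik$ from IK1. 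Third, the explicit form $\CA(p)=a^+p_+^{s_0}+a^-p_-^{s_0}$ yields both the growth $|\CA(q)|\le C(1+|q|)^{s_0}$ and the elementary increment bound
\[
|\CA(q+\Delta q)-\CA(q)|\le C|\Delta q|^{\min(s_0,1)}(1+|q|+|\Delta q|)^{(s_0-\min(s_0,1))_+}.
\]
Combined with $L>n+s_0$, these already imply absolute convergence of the sum defining $\Mpsi$ for $|\check y|,|p|\le c$.

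The $\check y$-estimate is then obtained by splitting the lattice at $|D_1 j|\le R$ and $|D_1 j|>R$: the inner contribution is $O(R^{n+s_0}|\Delta\check y|^{1-\{\bt_0\}})$ (Hölder bound times $|\CA|\le CR^{s_0}$ over $O(R^n)$ lattice points), and the outer contribution is $O(R^{n+s_0-L})$ (two-term bound using decay), so choosing $R$ to balance and taking $L$ large yields the claimed $O(|\Delta\check y|^{1-\{\bt_0\}})$. The $p$-estimate follows the same scheme with the Hölder increment of $\CA$ replacing that of $\CB_0\ik$. The main technical hurdle is the quantitative decay of $\CB_0\ik$: each integration by parts in $\eta$ either differentiates $\tilde B_0$ (lowering its order of homogeneity and creating a new low-frequency singularity) or $\tilde\ik$ (whose decay is controlled only by the finite smoothness from IK1), so one must check that IK1 is calibrated to give $L>n+s_0$ with room to spare. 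A secondary subtlety occurs when $\bt_0\in\mathbb Z$, where the abstract mapping yields only Zygmund regularity of $\CB_0\ik$ at threshold order $1$ and not genuine Lipschitz continuity; in this case one uses that IK1 then provides $\bt_0+1$ bounded derivatives of $\ik$, so that differentiating under the integral sign expresses the increment through an integral that is directly bounded by $|\Delta\check y|$, recovering the claimed exponent $1-\{\bt_0\}=1$.
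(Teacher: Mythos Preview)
Your overall architecture is the same as the paper's, but the execution of the $\check y$-increment has a real gap: the claimed decay $|\CB_0\ik(x)|\le C_L(1+|x|)^{-L}$ does \emph{not} hold for arbitrarily large $L$. Since $\tilde B_0(y_0,\cdot)$ is homogeneous of degree $\bt_0$, its Schwartz kernel $K$ is homogeneous of degree $-(n+\bt_0)$, and because $\int\ik=1$ one has $(K*\ik)(x)=K(x)+O(|x|^{-(n+\bt_0+1)})$ for large $|x|$; hence the sharp decay exponent is exactly $L=n+\bt_0$ and no better. Plugging $L=n+\bt_0$ into your balancing scheme gives only
\[
O\bigl(|\Delta\check y|^{(1-\{\bt_0\})\frac{\bt_0-s_0}{\,n+\bt_0\,}}\bigr),
\]
which is strictly weaker than the stated $O(|\Delta\check y|^{1-\{\bt_0\}})$. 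The phrase ``taking $L$ large'' is precisely the step that cannot be carried out.

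The fix---and this is what the paper does---is to take a \emph{fixed} cutoff $J$ (not depending on $\Delta\check y$) and treat the two pieces asymmetrically. The inner sum has finitely many terms, so the H\"older bound $|\CB_0\ik(u+\Delta\check y)-\CB_0\ik(u)|\le C|\Delta\check y|^{1-\{\bt_0\}}$ applies termwise with a constant depending only on $J$. For the outer sum one does \emph{not} use the two-term bound; instead one uses the mean value theorem together with the derivative estimate $|\nabla\CB_0\ik(u)|=O(|u|^{-(n+\bt_0+1)})$ (again from homogeneity of the kernel), which produces a factor $|\Delta\check y|$ and a tail sum $\sum_{|j|>J}|j|^{s_0-(n+\bt_0+1)}$ that converges since $\bt_0>s_0$. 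This yields $J_2=O(|\Delta\check y|)$, which is at least as good as $O(|\Delta\check y|^{1-\{\bt_0\}})$. The same correction applies to your $p$-increment argument: a fixed cutoff with $\CA'(q)=O(|q|^{s_0-1})$ on the tail replaces the balancing.
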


By Lemma~\ref{lem:psi-incr}, \eqref{recon-lead-4} equals to
\be\label{psi1-simpl}
\Mpsi_1\biggl(\Yo_x\check x,\frac{Q\ytt\cdot\ytt}2; \frac{D^{-1}\yor_0}\e+D_1^{-1}\left(\frac{Y_0'(0)\ytt}{\e^{1/2}}+\frac{Y_0''(0)\ytt\cdot\ytt}2\right)\biggr)+O(\e^{a/2}),
\ee
where $a=\min(1-\{\bt_0\},s_0,1)>0$. Thus, we need to compute the limit of the following integral as $\e\to0$:
\be\label{key-int}\begin{split}
&J(\e):=\int_{|\ytt|\le A}\Mpsi_1(\check y,q;u)d\ytt,\ \check y=\Yo_x\check x,\ q(\ytt)=\frac{Q\ytt\cdot\ytt}2,\\
&u(\ytt,\e)=\frac{D_1^{-1}Y_0'(0)\ytt}{\e^{1/2}}+\frac{D^{-1}\yor_0}\e
+D_1^{-1}\frac{Y_0''(0)\ytt\cdot\ytt}2.
\end{split}
\ee
Represent $\Mpsi_1$ in terms of its Fourier series:
\be\label{psi1-FT}
\Mpsi_1(\check y,q;u)=\sum_{m\in\mathbb Z^n}\tilde\Mpsi_{1,m}(\check y,q)e^{2\pi i m\cdot u}.
\ee

The columns of $Y_0'(0)$ are vectors that span the tangent space to $\T_{x_0}$ at $y_0=0$ written in the new $y$ coordinates. The columns of $UY_0'(0)$ span the tangent space to $\tilde \T_{x_0}$ at $\yor_0$ written in the original $\yor$ coordinates. By assumption, $\tilde\T_{x_0}$ is generic at $\yor_0$ with respect to $D$ (cf. Definition~\ref{def:gen}), so there is no $m\in\mathbb Z^n$ such that $m\not=0$ and $m D_1^{-1}Y_0'(0)=0$. The same argument as in (5.8)--(5.14) in \cite{kat20a} implies 
\be\label{f1lim}\begin{split}
\lim_{\e\to0}\e^{\kappa} f_\e^{(1)}(x_\e)
= \int_{|\ytt|\le A}&\int_{[0,1]^n}\Mpsi
\biggl(\Yo_x\check x+D_1 u,\frac{Q\ytt\cdot\ytt}2+\bp\cdot D_1 u\biggr)du d\ytt.
\end{split}
\ee
Here is an outline of the argument. Break up the integral with respect to $\ytt$ in \eqref{key-int} into a sum of integrals over a finite, pairwise disjoint covering of the domain of integration by subdomains $B_k$ with diameter $0<\de\ll1$. Then approximate each of these integrals by assuming that $\ytt$ is constant everywhere except in the first term of $u$. This is done by choosing $\ytt_k\in B_k$ in an arbitrary fashion:
\be\label{key-int-rs}\begin{split}
&J(\e)=\sum_k \left[J_k(\e)+O(\de^{a})\text{Vol}(B_k)\right],\
J_k(\e):=\int_{B_k}\Mpsi_1(\check y,q(\ytt_k);u_k(\ytt,\e))d\ytt,\\
&u_k(\ytt,\e)=\frac{D_1^{-1}Y_0'(0)\ytt}{\e^{1/2}}+\left[\frac{D^{-1}\yor_0}\e
+D_1^{-1}\frac{Y_0''(0)\ytt_k\cdot\ytt_k}2\right].
\end{split}
\ee
Thus, the variable of integration $\ytt$ is present only in the rapidly changing term that has $\e^{1/2}$ in the denominator. The magnitude of the error term $O(\de^{a})$ follows from Lemma~\ref{lem:psi-incr}. Represent each $\phi_1$ in \eqref{key-int-rs} in terms of its Fourier series \eqref{psi1-FT}. Using the fact that there is no $m\in\mathbb Z^n$ such that $m\not=0$ and $m D_1^{-1}Y_0'(0)=0$ implies
\be\label{lim-exp}
\lim_{\e\to0}\int_{B_k}\exp(2\pi i m\cdot u(\ytt,\e))d\ytt =0\text{ if } m\not=0.
\ee
By construction, the first term in $u_k$ (cf. \eqref{key-int-rs}) is the only one that contains $\ytt$ and changes rapidly as $\e\to0$. In turn, \eqref{lim-exp} implies 
\be\label{lim}
\lim_{\e\to0} J_k(\e)=\tilde\Mpsi_{1,m=0}(\check y,q(\ytt_k))=\int_{[0,1]^n}\Mpsi_1(\check y,q(\ytt_k);u)du.
\ee
Using \eqref{psi1} and that $\de>0$ can be as small as we like finishes the proof of \eqref{f1lim}.

By \eqref{pPsi-props} and \eqref{f1lim},
\be\label{f1lim-Psi}
\lim_{\e\to0}\e^{\kappa} f_\e^{(1)}(x_\e)
= \int_{|\ytt|\le A}\MPsi
\biggl(\bp\cdot\Yo_x\check x-\frac{Q\ytt\cdot\ytt}2\biggr)d\ytt.
\ee
Since $\pa Y_1/\pa x_\perp=0$ (cf. \eqref{x1y1-ders}), we have
\be\label{thyo}
\bp\cdot\Yo_x\check x = \bp_1\frac{\pa Y_1}{\pa x}\check x=\frac{\pa Y_1}{\pa x_1}\check x_1.
\ee

\section{Estimating the first part of the lower order terms}\label{sec:fplots}
In this section we prove that the first part of the lower order terms does not contribute to the DTB. As usual, by $c$ we denote various positive constants that may have different values in different places. From \eqref{YZ-diff}, \eqref{frst-order-eps}, \eqref{P-expl}, and \eqref{two-sets} it follows that there exists $c_1>0$ such that
\be\label{c1-assn}
\yt\in\Omega_1,\ y\in\T_{x_\e} \text{ implies } |P(y)|\le c_1\e.
\ee
Let $\bt$ and $s$ denote the remaining highest order exponents in \eqref{B-lim} and \eqref{g-lim}, respectively. By construction, $\bt_0-s_0>\bt-s$. This means that either $s=s_0$ if the first term in $\CB$ is missing (i.e., $\bt=\bt_1<\bt_0$), or $\bt=\bt_0$ if the first term in $\fs$ is missing (i.e., $s=s_1>s_0$).

Suppose initially that $\bt > \lfloor s^-\rfloor$. Set $k:=\lceil\bt^+\rceil$, $\nu:=k-\bt$. Thus, $0<\nu\leq 1$, $\nu=1$ if $\bt\in\N_0$, and $s_0\le s<k\le \lceil\bt_0^+\rceil$. Clearly,
\be\label{B-split-lot}
\CB =\CW_1 \pa_{y_1}^k+\CW_2,
\ee
for some $\CW_1\in S^{-\nu}(\vs)$ and $\CW_2\in S^{-\infty}(\vs)$. Here we use a cut-off near $\eta=0$ and the fact that the amplitude of $\CB$ is supported in a small conic neighborhood of $(y_0,\bp=(1,0,\dots,0)^T)$.

In view of \eqref{recon-lead-o}, consider:
\be\label{Psi_dc}
(\CB g_\e)(y)=\sum_{|j|\le O(1/\e)} \left(\CB \ik\left(\frac{\cdot-\vy^j}\e\right)\right)(y)
g(\vy^j).
\ee
Then
\be\label{Psid-lot}
(\CB g_\e)(y)=\int  K(y,y-w) g_\e^{(k)}(w)dw+O(1),
\ee
where $K(y,w)$ is the Schwartz kernel of $\CW_1$, and $O(1)$ represents $\CW_2 g_\e (y)$. The latter statement follows, because $g_\e(y)$ is uniformly bounded as $\e\to0$ for all $y\in\vs$ (cf. \eqref{g-continuity}) and compactly supported. By the estimate (5.13) in \cite{Abels12},
\be\label{ker-est-lot}
|\pa_{w_1}^l K(y,y-w)|\le c(l)|y-w|^{-(n-\nu+l)},\ l\ge 0,\ y,w\in\vs.
\ee
Combining \eqref{B-split-lot}, \eqref{F-def}, the two top cases in \eqref{F-ineq-lot} with $l=k$ and $s_0$ replaced by $s$, \eqref{Psid-lot}, and \eqref{ker-est-lot} with $l=0$, gives
\be\label{J-etwo-lot}\begin{split}
&|(\CB g_\e)(y)|\leq c(J_1+J_2)+O(1),\\
&J_1:=\int_{\varkappa_1\e\leq |P(w)|\leq O(1)} \frac{|w_1-\psi(w_\perp)|^{s-k}}{|y-w|^{n-\nu}} dw,\
J_2:=\int_{|P(w)|\leq \varkappa_1\e} \frac{\e^{s-k}}{|y-w|^{n-\nu}} dw,
\end{split}
\ee
where $\varkappa_1$ is the same as in Lemma \ref{lem:gel}. Consider $J_1$:
\be\label{J1-lot}
J_1=\int\int_{\varkappa_1\e\leq |p|\leq O(1)} \frac{|p|^{s-k}}{|([P-p]+\psi(y_\perp)-\psi(w_\perp),y_\perp-w_\perp)|^{n-\nu}} dp dw_\perp,
\ee
where we denoted $P:=P(y)$ and changed variables $w_1\to p=w_1-\psi(w_\perp)$. There exists $0<c'<1$ so that
\be\label{denom-est}
|a+\psi(y_\perp)-\psi(w_\perp)|+|y_\perp-w_\perp|\geq c'(|a|+|y_\perp-w_\perp|),\ a\in\br,y,w\in\vs.
\ee
By construction, $\psi'(0)=0$. Assume $\vs$ is sufficiently small, so that $|\psi(y_\perp)-\psi(w_\perp)|\leq c''|y_\perp-w_\perp|$, $y,w\in\vs$, for some $0<c''<1$. Then any $c'$ such that $0<c'<1-c''$ works. This implies
\be\label{J1-ethree-lot}\begin{split}
J_1\leq &c\int_{\varkappa_1\e\leq |p|\leq O(1)}\int \frac{|p|^{s-k}}{(|P-p|+|w_\perp|)^{n-\nu}} dw_\perp dp\\
\leq & c\int_{\varkappa_1\e\leq |p|\leq O(1)} \frac{|p|^{s-k}}{|P-p|^{1-\nu}} dp=\begin{cases} O(\e^{s-\bt}),& \bt>s,\\ O(\ln(1/\e)),&\bt=s,\\ O(1),&\bt<s.\end{cases}
\end{split}
\ee
Here we have used that $P=O(\e)$.

The term $J_2$ can be estimated analogously, and we get an estimate similar to \eqref{J1-ethree-lot}, where the bound is $O(\e^{s-\bt})$ in all three cases. 

Suppose $\bt>s$. By \eqref{J-etwo-lot}, $(\CB g_\e)(y)=O(\e^{s-\bt})$. Estimate the integral in \eqref{recon-lead-o}:
\be\label{f1-est-lot}
\begin{split}
|\e^\kappa f_\e^{(1)}(x_\e)|\le & \e^\kappa \int_{\Omega_1} O(\e^{s-\bt}) d\yt=
O(\e^{\kappa+s-\bt})\int_0^{A\e^{1/2}}r^{N-1}dr\\
=&O(\e^{(\bt_0-s_0)-(\bt-s)})\text { if }\bt>s.
\end{split}
\ee
In a similar fashion,
\be\label{f1-est-lot-extra}
|\e^\kappa f_\e^{(1)}(x_\e)|=\begin{cases}
O(\e^{\bt_0-s_0}\ln(1/\e)), & \bt=s,\\
O(\e^{\bt_0-s_0}), & \bt<s.
\end{cases}
\ee
Since $\bt_0-s_0\ge N/2\ge1/2$, $\e^\kappa f_\e^{(1)}(x_\e)\to0$ in all three cases.

Suppose now $0<\bt \le \lfloor s^- \rfloor$. Similarly to \eqref{B-split-lot}, 
$\CB =\CW_1 \pa_{y_1}^k+\CW_2$, where $k=\lceil \bt \rceil\ge1$, $\nu=k-\bt\ge 0$, $s>k$, $\CW_1\in S^{-\nu}(\vs)$, and $\CW_2\in S^{-\infty}(\vs)$. The kernel of $\CW_1$ is an $L^1$ function (see e.g. Theorem 5.15 in \cite{Abels12}) and $\sup_{y\in\vs}|\Delta g_\e^{(k)}(y)|=O(\e^{s-k})$ (cf. \eqref{F-ineq-2} with $s_0$ replaced by $s$). This implies that  $\sup_{y\in\vs}|(\CB g_\e)(y)-\CB g(y)|=O(\e^{s-k})$. From Lemma~\ref{lem:conorm}, $\CB g\in C_*^{s-\bt}(\vs)$, and $s>\bt$. Thus, $(\CB g_\e)(y)=O(1)$, and the desired result follows similarly to the case $\bt<s$ in \eqref{f1-est-lot-extra}. The case $\bt\le0$ is proven using the same argument with $l=0$ in \eqref{F-ineq-2} and without splitting $\CB$ into two parts.

\section{Estimating the second part of the DTB}\label{sec:spdtb}
Next, consider the quantity $f_\e^{(2)}(x_\e)$:
\be\label{recon-lead-t}\begin{split}
f_\e^{(2)}(x_\e)= &\int_{\substack{y\in\T_{x_\e}\\ |\yt|> A\e^{1/2}}}\sum_{|j|\le O(1/\e)} \left(\CB \ik\left(\frac{\cdot-\vy^j}\e\right)\right)(y)
g(\vy^j) w(x_\e,y)dy,
\end{split}
\ee
where both $\CB$ and $g$ are given by their full expressions. The continuous counterpart of \eqref{recon-lead-t} is 
\be\label{recon-lead-cont-t}
f^{(2)}(x_\e)= \int_{\substack{y\in\T_{x_\e}\\ |\yt|> A\e^{1/2}}} (\CB g)(y)
w(x_\e,y)dy.
\ee


The following lemma is proven in Appendix~\ref{sec:pPdiff}.

\begin{lemma}\label{lem:pPdiff} Suppose $\CB$, $g$, and $\ik$ satisfy the assumptions in Section~\ref{sec:assns}. There exist $c,\varkappa_2>0$ such that for all $\e>0$ sufficiently small one has
\be\label{bt-aux-est}
|(\CB g_\e)(y)-(\CB g)(y)|\le c\e \left|P(y)\right|^{s_0-1-\bt_0}\begin{cases}1,&\bt_0\not\in\N,\\
|\ln(P(y)/\e)|,&\bt_0\in\N,\end{cases}\ y\in\vs,
\ee
whenever $|P(y)|>\varkappa_2\e$.
\end{lemma}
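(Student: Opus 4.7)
Following the strategy of Section~\ref{sec:fplots}, decompose $\CB=\CW_1\pa_{y_1}^k+\CW_2$ as in \eqref{B-split-lot}, with $k:=\lceil\bt_0^+\rceil$, $\nu:=k-\bt_0\in(0,1]$, $\CW_1\in S^{-\nu}(\vs)$, and $\CW_2\in S^{-\infty}(\vs)$. Setting $h:=g_\e-g$, this gives
\[
(\CB g_\e-\CB g)(y)=(\CW_1\Delta g_\e^{(k)})(y)+(\CW_2 h)(y),
\]
and I would estimate each piece separately on the set $|P(y)|\ge \varkappa_2\e$, for $\varkappa_2$ chosen so that $\varkappa_2>2\varkappa_1$.

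The smoothing term $\CW_2 h$ is the easier one. Its Schwartz kernel, combined with the bound $|h(w)|\le c\e|P(w)|^{s_0-1}$ for $|P(w)|\ge \varkappa_1\e$ from Lemma~\ref{lem:delta} (where applicable) and $|h(w)|\le c\e^{s_0}$ for $|P(w)|\le \varkappa_1\e$, yields $|(\CW_2 h)(y)|=O(\e)$. Since $\vs$ is bounded and $s_0-1-\bt_0<0$, this bound is absorbed into $c\e|P(y)|^{s_0-1-\bt_0}$.

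The principal contribution is $(\CW_1\Delta g_\e^{(k)})(y)=\int K_1(y,y-w)\Delta g_\e^{(k)}(w)\,dw$, where the Schwartz kernel of $\CW_1\in S^{-\nu}$ satisfies $|K_1(y,z)|\le c|z|^{-(n-\nu)}$, with a logarithmic modification responsible for the factor $|\ln(P(y)/\e)|$ precisely when $\nu=1$, that is, when $\bt_0\in\N$. I would split the $w$-integral into two regions: (a) $|P(w)|\ge \varkappa_1\e$, where estimate \eqref{F-ineq-1} yields $|\Delta g_\e^{(k)}(w)|\le c\e|P(w)|^{s_0-1-k}$; and (b) $|P(w)|\le \varkappa_1\e$, where I further write $\Delta g_\e^{(k)}=g_\e^{(k)}-\pa_{y_1}^k g$. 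In (b) the first summand is pointwise bounded by $c\e^{s_0-k}$ via Lemma~\ref{lem:gel}, while the second, being distributional near $\Gamma$, is handled by subtracting the strip integral from the full integral and using $\CW_1\pa_{y_1}^k g=\CB g-\CW_2 g$ together with $|(\CB g)(y)|\le c|P(y)|^{s_0-\bt_0}$ from Lemma~\ref{lem:P_c-ass}. Throughout region (b), the constraint $|P(y)|\ge \varkappa_2\e>2\varkappa_1\e$ forces $|y-w|\ge c|P(y)|$, providing the kernel decay that keeps the contribution within target. In both (a) and (b) the change of coordinates $w_1\mapsto p=P(w)$ reduces the inner integral over $w_\perp$ to a factor $c|P(y)-p|^{\nu-1}$ (for $\nu<1$) or $c\ln(L/|P(y)-p|)$ (for $\nu=1$), and the resulting one-dimensional $p$-integral is split into the subregions $|p|\ll |P(y)|$, $|p|\sim |P(y)|$, and $|p|\gg |P(y)|$, each of which can be computed in closed form to give the bound $c\e|P(y)|^{s_0-1-\bt_0}$.

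The main obstacle is the bookkeeping in region (b): a naive pointwise bound on $\Delta g_\e^{(k)}$ in the strip by itself is not sharp enough (it overshoots the target by a factor $(|P(y)|/\e)^{k-s_0}$), and one must exploit both the kernel decay coming from $|y-w|\ge c|P(y)|$ and the built-in bound on $\CB g$ from Lemma~\ref{lem:P_c-ass} in order to absorb the distributional piece $\pa_{y_1}^k g$. The logarithmic factor when $\bt_0\in\N$ is an unavoidable artifact of the marginal integrability of the $w_\perp$-integral of $K_1$ at $\nu=1$.
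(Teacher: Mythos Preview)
Your overall decomposition $\CB=\CW_1\pa_{y_1}^k+\CW_2$ and the treatment of the smoothing term $\CW_2 h$ are fine, but there is a genuine gap in how you handle the principal piece. The fix you describe for region (b) does not close the argument.

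\textbf{The split point must be $|P(w)|=|P(y)|/2$, not $\varkappa_1\e$.} With your split, region (a) already overshoots. On the subinterval $|p|\in[\varkappa_1\e,\,P/2]$ one has $|P-p|\asymp P$, so after the $w_\perp$-integration your bound reads
\[
c\,\e\,P^{\nu-1}\!\int_{\varkappa_1\e}^{P/2}|p|^{s_0-1-k}\,dp\;\asymp\;c\,\e^{\,s_0-k+1}P^{\nu-1},
\]
which exceeds the target $c\,\e\,P^{s_0-1-\bt_0}$ by the factor $(P/\e)^{k-s_0}$, exactly the overshoot you flag for region (b). Moving the cut to $P/2$ removes this: on $|p|\ge P/2$ the factor $|p|^{s_0-1-k}$ is $\asymp P^{s_0-1-k}$, and the remaining kernel integral contributes $P^{\nu}$, yielding the correct $c\,\e\,P^{s_0-1-\bt_0}$.

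\textbf{Region (b) requires integration by parts, not the subtraction trick.} Writing $\int_{|P(w)|\le\varkappa_1\e}K_1\,\pa_{y_1}^k g=\CW_1\pa_{y_1}^k g-\int_{|P(w)|>\varkappa_1\e}K_1\,\pa_{y_1}^k g$ and invoking Lemma~\ref{lem:P_c-ass} only reassembles $(\CW_1\Delta g_\e^{(k)})(y)=(\CW_1 g_\e^{(k)})(y)-(\CB g-\CW_2 g)(y)$, which is the trivial identity; it does not cancel the $(P/\e)^{k-s_0}$ loss from the $g_\e^{(k)}$ strip term. The paper instead integrates by parts in $w_1$ over the region $|P(w)|\le P/2$, transferring $k-l_0$ derivatives (with $l_0=\lfloor s_0^-\rfloor$) from $\Delta g_\e^{(k)}$ to the kernel. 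The boundary terms land on $|P(w)|=P/2$, where $|y-w|\ge cP$, and the bulk term uses the sharper estimates \eqref{F-ineq-1}, \eqref{F-ineq-2} at $l=l_0$, for which $\Delta g_\e^{(l_0)}=O(\e^{s_0-l_0})$ uniformly. This is the missing idea.

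\textbf{The source of the logarithm is misidentified.} The paper takes $k=\lceil\bt_0\rceil$, so $\nu=0$ when $\bt_0\in\N$; the kernel of $\CW_1\in S^0$ behaves like $|y-w|^{-n}$ and is \emph{not} locally integrable. The logarithm $|\ln(P/\e)|$ comes from integrating $|P-p|^{-1}$ over the annuli $[P/2,\,P-c_1\e]$ and $[P+c_1\e,\,\infty)$, not from the $w_\perp$-integral at $\nu=1$. Handling the remaining shell $|P-p|\le c_1\e$ requires a principal-value type argument (subtracting $\Delta g_\e^{(\bt_0)}(y)$ inside the integral and bounding the resulting oscillatory integral of the $S^0$ kernel separately), which your outline does not anticipate.
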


Return now to \eqref{recon-lead-t}. 
Pick any $y\in\T_{x_\e}$. Recall that $y=Y(\yt,x)$ is obtained by solving $x=\Phi(t,y)$ for $t$ and $\yo$, and that $\yt\equiv Y^{\tp}(\yt,x)$. Since $\det \Phi_{(t,\yo)}\not=0$, $Y(\yt,x)$ is a smooth function of $x$. Hence $|Y(\yt,x_\e)-Y_0(\yt)|=O(\e)$. Strictly speaking, we cannot invoke \eqref{frst-order-eps} here, because in \eqref{frst-order-eps} the assumption is $|\yt|=O(\e^{1/2})$. Therefore,
\be\label{P-alt}
P(Y(\yt,x_\e))=P(Y_0(\yt))+O(\e)=\bp\cdot(Y_0(\yt)-Z(\yt))+O(\e).
\ee
Recall that $Z(\yt)$ is the projection of $Y_0(\yt)$ onto $\Gamma$, cf. Lemma~\ref{lem:tang}.
Using \eqref{YZ-diff}, \eqref{two-sets}, and that $Q$ is negative definite, by shrinking $\vs$, if necessary, and taking $A\gg1$ large enough, we can make sure that (a) $P(y)\ge c|\yt|^2$ for some $c>0$ and (b) inequality \eqref{bt-aux-est} applies (i.e. $P(y)>\varkappa_2\e$) if $y\in\T_{x_\e}$ and $\yt\in\Omega_2$ for all $\e>0$ small enough.

Suppose first that $\kappa>0$. Using \eqref{Psi_dc}, \eqref{recon-lead-t}, \eqref{bt-aux-est}, and \eqref{Psic-ass-Bg} gives an estimate
\be\label{f2-est}\begin{split}
|\e^\kappa f_\e^{(2)}(x_\e)|\le &O(\e^\kappa)\int_{\Omega_2}\left(\e P(y)^{s_0-\bt_0-1}\ln(P(y)/\e)+P(y)^{s_0-\bt_0}\right)d\yt\\
\le & O(\e^\kappa)\int_{\Omega_2}\left(\e |\yt|^{2(s_0-\bt_0-1)}\ln(P(y)/\e)+|\yt|^{2(s_0-\bt_0)}\right)d\yt\\
=&O(A^{-2\kappa}).
\end{split}
\ee
If $\kappa=0$, we get from \eqref{recon-lead-cont-t}, \eqref{bt-aux-est}, and \eqref{Psic-ass-extra}
\be\label{f2-est-k0}\begin{split}
|f_\e^{(2)}(x_\e)-f^{(2)}(x_\e)|\le &\int_{\Omega_2}\e P(y)^{s_0-\bt_0-1}\ln(P(y)/\e)d\yt\\
\le & \e\int_{\Omega_2}|\yt|^{2(s_0-\bt_0-1)}\ln(P(y)/\e)d\yt
=O(A^{-2}\ln A).
\end{split}
\ee
As $A\gg1$ can be arbitrarily large, combining \eqref{f2-est} and \eqref{f2-est-k0} with \eqref{f1lim-Psi}, \eqref{thyo} yields
\be\label{f1lim-1}
\lim_{\e\to0}\begin{cases} \e^{\kappa}  \check f_\e(x_\e),&\kappa>0\\ \check f_\e(x_\e)-f^{(2)}(x_\e),&\kappa=0 \end{cases}
= \int_{\br^{N}}\MPsi \biggl(\frac{\pa Y_1}{\pa x_1}\check x_1-\frac{Q\ytt\cdot\ytt}2\biggr)d\ytt.
\ee

Assume again that $\kappa=0$.
By \eqref{Psic-ass-extra} and that $P(y)>c|\yt|^2$ if $y\in\T_{x_\e}$ and $\yt\in\Omega_2$, it follows that the integral in \eqref{recon-lead-cont-t} admits a uniform (i.e., independent of $\e>0$ sufficiently small, $A\gg1$ sufficiently large, and $\check x$ confined to a bounded set) integrable bound:
\be\label{f2-bound}\begin{split}
\int_{|\yt|\le O(1)} P(y)^{s_0-\bt_0+c}d\yt
\le & c'\int_{|\yt|\le O(1)}|\yt|^{2(s_0-\bt_0+c)}d\yt\\
\le & c'\int_0^{O(1)}r^{2(s_0-\bt_0+c)}r^{N-1}dr<\infty,\ \bt_0-s_0=N/2.
\end{split}
\ee
In \eqref{f2-bound}, the constant $c$ in the exponent is the same as the one in \eqref{Psic-ass-extra}. Therefore, we can compute the limit of $f^{(2)}(x_\e)$ as $\e\to0$ by taking the pointwise limit of the integrand in \eqref{recon-lead-cont-t}. This limit is independent of $A\gg1$. Shrinking $\vs$ if necessary, by \eqref{YZ-diff} we can ensure that $P(y)>0$ for any $y\in\T_{x_0}$, $y\not=0$. Hence
\be\label{f2c-lim}
\lim_{\e\to0}f^{(2)}(x_\e) = \int_{\T_{x_0}} (\CB g)(y)w(x_0,y)dy:=\lim_{\de\to0}\int_{\substack{y\in\T_{x_0}\\ |\yt|> \de}} (\CB g)(y)w(x_0,y)dy.
\ee
Thus, the limit is independent of $\check x$. 

A slightly more general argument holds as well. Let $x=(x_1,0,\dots 0)^T\in \us$ be a point with $x_1>0$ sufficiently small, and let $y\in\T_x$ be arbitrary. It follows from \eqref{YZ-diff} and $\pa Y_1/\pa x_1>0$ (see \eqref{x1y1-ders}) that $P(Y(\yt,x))\ge c(x_1+|\yt|^2)$ for some $c>0$. For any such $x$, we still have the same lower bound $P(Y(\yt,x))\ge c|\yt|^2$. Adopt the convention that the interior side of $\s$ is the one where the $x_1$ axis points and define
\be\label{int-ext}
x_0^{\text{int}}:=\lim_{x_1\to 0^+}(x_1,x_\perp=0),\quad x_0^{\text{ext}}:=\lim_{x_1\to 0^-}(x_1,x_\perp=0).
\ee
In view of \eqref{f2-bound}, we can use dominated convergence to conclude
\be\label{f2c-lim-alt}
\int_{\T_{x_0}} (\CB g)(y)w(x_0,y)dy=\lim_{\substack{x=(x_1,x_\perp=0)\\x_1\to 0^+}}\int_{y\in\T_x} (\CB g)(y)w(x_0,y)dy=(\R^*\CB g)(x_0^{\text{int}}).
\ee

\section{Computing the DTB}\label{sec:compdtb}

The right side of \eqref{f1lim-1} simplifies to the expression
\be\label{f1lim-2}
\frac{2^{N/2}}{|\det Q|^{1/2}}
\int_{\br^N}\MPsi\left(\frac{\pa Y_1}{\pa x_1}\check x_1+\vert v \vert^2\right)dv
=\frac{2^{N/2}|S^{N -1}|}{|\det Q|^{1/2}}
\ioi\MPsi\left(\frac{\pa Y_1}{\pa x_1}\check x_1+q^2\right)q^{N-1}dq.
\ee
Set (see \eqref{Psi_a})
\be\label{J-def}\begin{split}
J(h):=&\frac12 \int\MPsi(h+q)q_+^{(N -2)/2}dq
=\frac{e(-N/2)\Gamma(N/2)}2 \CF_{1d}^{-1}(\tilde\MPsi(\la)(\la-i0)^{-N/2}),\\ 
h=&\frac{\pa Y_1}{\pa x_1}\check x_1.
\end{split}
\ee
Using \eqref{recon-lead-o1}, \eqref{oper1d}, \eqref{pPsi-def}, and \eqref{g-FT} gives
\be\label{J-ft-v2}\begin{split}
J(h)=&\frac{\Gamma(N/2)w(x_0,y_0)}2 \CF_{1d}^{-1}\left(\tilde\ik(\la\bp)\mu(\la)\right)(h),\\
\mu(\la):=&\tilde B_0(y_0,\bp)\fs^+(y_0)e(-N/2)\la_+^{\kappa-1}+\tilde B_0(y_0,-\bp)\fs^-(y_0)e(N/2)\la_-^{\kappa-1}.
\end{split}
\ee
Here we have used that $\pa\Yo/\pa\yt=0$ (see at the end of the proof of Lemma~\ref{lem:tang_0}) and $Y^\tp(\yt,x)\equiv\yt$, so $G^{\T}$ is the identity matrix, and $\det G^{\T}=1$. 

The function $J(h)$ is identical to the one introduced in (4.6) of \cite{kat20b} if we replace $n$ in the latter with $N+1$. See Section 4.5 of \cite{kat20b} for additional information about this function. In particular, $\Upsilon(p)=O(|p|^{-(\bt_0-s_0)})$, $p\to\infty$ (this follows from \eqref{pPsi-def}), hence the integral in \eqref{J-def} is absolutely convergent if $\kappa>0$. Also, $\mu(\la)$ is the product of three distributions $\tilde b(\la)\tilde\CA(\la)(\la-i0)^{-N/2}$, which is well-defined as a locally integrable function if $\kappa>0$.

Combine with the constant in \eqref{f1lim-2} and compute the inverse Fourier transform (cf. \eqref{Psi_a})
\be\begin{split}\label{DTB-v1}
&DTB(h)=C_1\int \hat\ik(\bp,h-p)\left(c_1^+(p-i0)^{-\kappa}+c_1^-(p+i0)^{-\kappa}\right)dp,\ \kappa>0,\\ 
&C_1=(2\pi)^{N/2}w(x_0,y_0)|\det Q|^{-1/2}, \
c_1^\pm=\frac{\Gamma(\kappa)}{2\pi}\tilde B_0(y_0,\pm\bp)\fs^\pm(y_0)e(\mp (\bt_0-s_0)).
\end{split}
\ee

If $\kappa=0$, a more careful analysis of $J(h)$ is required (see Section 4.6 of \cite{kat20b}). Similarly to \eqref{Bvsum-k0}, \eqref{Bg-k0}, condition \eqref{c-ratios-req} implies that $\Upsilon(p)\equiv 0$, $p>c$, for some $c>0$, hence the integral in \eqref{J-def} is still absolutely convergent. Straightforward multiplication of the distributions to obtain $\mu(\la)$ no longer works, because $\mu(\la)$ is not a locally integrable function if $\kappa=0$. Fortunately, in this case $\mu(\la)$ is computed in (4.45) of \cite{kat20b} (see (4.45)--(4.47) in \cite{kat20b}). Observe that condition \eqref{c-ratios-req} in this paper is equivalent to condition (4.6) of \cite{kat20b}. At first glance the two conditions differ by a sign, but $s_0$ in \cite{kat20b} corresponds to $s_0+1$ here, which eliminates the discrepancy. Then $\mu(\la)=\tilde B_0(y_0,\bp)\fs^+(y_0)e(-N/2)(\la-i0)^{-1}$, and \eqref{DTB-v1} becomes
\be\begin{split}\label{DTB-v2}
DTB(h)=&C_1c_1\int \hat\ik(\bp,h-p)p_-^0 dp=C_1c_1\int_{-\infty}^0 \hat\ik(\bp,h-p)dp,\ \kappa=0,\\
c_1:=&i\tilde B_0(y_0,\bp)\fs^+(y_0) e(-(\bt_0-s_0)),
\end{split}
\ee
where $C_1$ is the same as in \eqref{DTB-v1}.  

Let us now compute $|\det Q|^{1/2}$. This can be done by eliminating the auxiliary variable $t$ (see Section~\ref{sec:prelims}). Since $\det \Phi^\tp_t\not=0$, we can solve $\xt=\Phi^\tp(t,y)$ for $t$. This gives a smooth function $t=T(\xt,y)$. Then we can define a function $X(\xt,y):=\Phi(T(\xt,y),y)$, which parameterizes the surfaces $\s_y$ in terms of $\xt$. By construction, this function satisfies
\be\label{X-ids} 
X^\tp(\xt,y)\equiv\xt \text{ and } X(\Phi^\tp(t,y),y)\equiv \Phi(t,y).
\ee
The following lemma is proven in Appendix~\ref{sec:dets}.
\begin{lemma}\label{lem:detQ} One has
\be\label{Q-final}
|\det Q|^{1/2}=\left|({\pa X_1}/{\pa y_1})^N{\det\dsf}\right|^{-1/2}\left|\det\frac{\pa^2 X_1}{\pa \xt\pa\yt}\right|
\ee
and
\be\label{x1y1-ders}
{\pa Y_1}/{\pa x_1}=({\pa X_1}/{\pa y_1})^{-1},\quad {\pa Y_1}/{\pa x_1},{\pa X_1}/{\pa y_1}>0,\quad \pa Y_1/\pa x_\perp=0,
\ee
where $y=Y(\yt,x)$ is the function defined in the paragraph preceding \eqref{P-alt}.
\end{lemma}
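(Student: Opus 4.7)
The plan is to translate every quantity defining $Q$ into derivatives of the eliminated-parameter representation $X(\xt,y)=\Phi(T(\xt,y),y)$. The main tools are the defining identity $X^{(2)}\equiv\xt$, the centering conditions $\Phi_t^{(1)}=0$ and $\Phi_\yt^{(1)}=0$ from \eqref{xcoords} and \eqref{y-split-extra}, the formula \eqref{jacob-sm} for $(\Psi\circ\Phi)_{tt}$, and the normalization $\xi_0=|d\Psi|e_1$, $\bp=dy_1$.

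First I will establish \eqref{x1y1-ders}. Differentiating $X(\xt,y)=\Phi(T(\xt,y),y)$ with respect to $y$ and imposing $X_y^{(2)}\equiv 0$ gives $T_y=-(\Phi_t^{(2)})^{-1}\Phi_y^{(2)}$, so at the central point (where $\Phi_t^{(1)}=0$) one has $\pa X^{(1)}/\pa y=\Phi_y^{(1)}$. The first row of this matrix is the gradient of $X_1$; combined with $\bp_j=\xi_0\cdot\Phi_{y_j}=|d\Psi|\,\Phi_{y_j,1}$ and $\bp=dy_1$, this yields $\pa X_1/\pa y_1=1/|d\Psi|$ and $\pa X_1/\pa y_j=0$ for $j>1$, giving positivity and the reciprocal relation. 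For $Y$, differentiate $\Phi(t,y)=x$ with $\yt$ held fixed: the identity $\Phi_t^{(1)}=0$ decouples the top row, producing $\pa\yo/\pa x^{(1)}=(\Phi_\yo^{(1)})^{-1}$ and $\pa\yo/\pa\xt=0$. The first row of $(\Phi_\yo^{(1)})^{-1}$ is $|d\Psi|e_1^T$ by the previous computation, so $\pa Y_1/\pa x_1=|d\Psi|=(\pa X_1/\pa y_1)^{-1}$ and $\pa Y_1/\pa x_\perp=0$.

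Next I will simplify $Q$. The same decoupling applied to the system $M_{11}(\delta t,\delta\yo)^T=-M_{12}\delta\yt$ (which encodes $\pa(t,\yo)/\pa\yt$ at fixed $x$) yields, at the central point, $\pa\yo/\pa\yt=0$ and $\pa t/\pa\yt=-(\Phi_t^{(2)})^{-1}\Phi_\yt^{(2)}$. Substituting into $C=M_{22}-M_{21}M_{11}^{-1}M_{12}$ gives the explicit form
\[
C=(\xi_0\cdot\Phi)_{t\yt}-(\xi_0\cdot\Phi)_{tt}\,(\Phi_t^{(2)})^{-1}\Phi_\yt^{(2)}.
\]
Combining this with $(\Psi\circ\Phi)_{tt}=|d\Psi|(\Phi_t^{(2)})^T\dsf\,\Phi_t^{(2)}$ from \eqref{jacob-sm} and setting $D:=(\Phi_t^{(2)})^{-T}C$, a short computation produces $Q=|d\Psi|^{-1}D^T\dsf^{-1}D$, hence
\[
|\det Q|^{1/2}=|d\Psi|^{-N/2}|\det\dsf|^{-1/2}|\det D|.
\]

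The step I expect to be the main obstacle is the identification $D=|d\Psi|\,\pa^2 X_1/\pa\xt\,\pa\yt$, because it requires careful bookkeeping of the implicit-function derivatives of $T$. Differentiating $X^{(1)}(\xt,y)=\Phi^{(1)}(T(\xt,y),y)$ once in $y$ (using $\Phi_t^{(1)}=0$ at the center) and then once in $\xt$, the term containing the second derivative $T_{\xt\yt}$ is killed by another factor of $\Phi_t^{(1)}$, leaving the component-wise formula
\[
\frac{\pa^2 X_1}{\pa\xt_i\,\pa\yt_j}=\sum_{l}(\Phi_t^{(2)})^{-1}_{l,i}\Bigl[\Phi^{(1)}_{t_l\yt_j,1}-\sum_{k}\Phi^{(1)}_{t_l t_k,1}\bigl[(\Phi_t^{(2)})^{-1}\Phi_\yt^{(2)}\bigr]_{k,j}\Bigr].
\]
Since $\xi_0\cdot\Phi=|d\Psi|\Phi_1$, the bracketed expression is precisely $C_{l,j}/|d\Psi|$, so the full matrix equals $|d\Psi|^{-1}(\Phi_t^{(2)})^{-T}C=|d\Psi|^{-1}D$, as claimed. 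Substituting $|\det D|=|d\Psi|^N|\det(\pa^2 X_1/\pa\xt\,\pa\yt)|$ together with $|d\Psi|=(\pa X_1/\pa y_1)^{-1}$ into the determinant formula for $Q$ yields \eqref{Q-final}, completing the proof.
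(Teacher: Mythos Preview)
Your proof is correct. The argument for \eqref{x1y1-ders} is essentially the same as the paper's; both differentiate the identities $X_1(\Phi^{(2)}(t,y),y)\equiv\Phi_1(t,y)$ and $x_1\equiv\Phi_1(t,Y^{(1)},\yt)$ and use $\Phi_t^{(1)}=0$, $(\Psi\circ\Phi)_{y_\perp}=0$ to decouple the first component.

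For \eqref{Q-final} the mechanics differ. The paper never writes $C$ explicitly. Instead it uses the Schur-complement relation $|\det C|=|\det M|/|\det M_{11}|$, expands $|\det M|$ from the block form \eqref{M-alt} (the zero blocks make this a Laplace expansion along the $\Phi_{\yo}^{(1)}$ block), and then row-reduces the remaining $2N\times 2N$ determinant $\det\begin{pmatrix}\Phi_t^{(2)}&\Phi_{\yt}^{(2)}\\(\Phi_1)_{tt}&(\Phi_1)_{t\yt}\end{pmatrix}$ via the identities $(\Phi_1)_{tt}=L\Phi_t^{(2)}$, $(\Phi_1)_{t\yt}=L\Phi_{\yt}^{(2)}+(\Phi_t^{(2)})^T(X_1)_{\xt\yt}$, with $L=(\Phi_t^{(2)})^T(X_1)_{\xt\xt}$. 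Only at the very end is the ratio $|\det C|/|\det(\Psi\circ\Phi)_{tt}|^{1/2}$ assembled. Your route is more direct: you compute $M_{11}^{-1}M_{12}$ at the center by solving the block-triangular system, obtain $C$ in closed form, and then factor $Q=|d\Psi|^{-1}D^T\dsf^{-1}D$ before taking determinants, so that $\det\Phi_t^{(2)}$ cancels automatically and the identification $D=|d\Psi|\,\pa^2 X_1/\pa\xt\pa\yt$ comes from a single chain-rule computation. Both approaches rely on the same centering conditions; yours trades the block-determinant bookkeeping for an explicit inversion of $M_{11}$, which is arguably cleaner here because the zero pattern of $M_{11}$ makes that inversion trivial.
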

Recall that $\pa/\pa\xt_j$, $1\le j\le N$, are basis vectors spanning the tangent space to $\s_{y_0}$ at $x_0$. Likewise, $\pa/\pa\yt_j$, $1\le j\le N$, are basis vectors spanning the tangent space to $\T_{x_0}$ at $y_0$.

Consider now a particular case, where $g=\R f$, and $f$ is given by \eqref{f-orig}, \eqref{f-lim}. Then $\R f$ is given by \eqref{GRT-f-alt}, \eqref{f-lim-coefs}. From Assumption C3 and \eqref{jacob-sm}, $\text{sgn}( (\Psi\circ\Phi)_{tt}(t^*,y))=-(n-1)$. The analogues of \eqref{DTB-v1}, \eqref{DTB-v2} are computed to be 
\be\begin{split}\label{DTB-v3}
DTB(h)=&C_2\int \hat\ik(\bp,h-p)\left(c_2^+(p-i0)^{-\kappa}+c_2^-(p+i0)^{-\kappa}\right)dp,\ \kappa>0,\\ 
C_2=&(2\pi)^N b(x_0,y_0)w(x_0,y_0)\left|\frac{\det G^{\s}}{\det Q\det (\Psi\circ\Phi)_{tt}}\right|^{1/2}, \\
c_2^\pm=&\frac{\Gamma(\kappa)}{2\pi}\tilde B_0(y_0,\pm\bp)\tilde f^\pm(x_0)e\left(\mp\bigl( \bt_0-s_0-\frac{n-1}2\bigr)\right),
\end{split}
\ee
and 
\be\begin{split}\label{DTB-v4}
DTB(h)=&C_2c_2\int \hat\ik(\bp,h-p)p_-^0 dp,\ \kappa=0,\\
c_2:=&i\tilde B_0(y_0,\bp)\tilde f^+(x_0)e\left(-\bigl( \bt_0-s_0-\frac{n-1}2\bigr)\right).
\end{split}
\ee

The following lemma is proven in Appendix~\ref{sec:dets}.
\begin{lemma}\label{lem:chi} One has
\be\label{dets-only-v2}\begin{split}
\chi:=\left|\frac{\det G^{\s}}{\det Q\det (\Psi\circ\Phi)_{tt}}\right|^{1/2}
=\left(\frac{\pa X_1}{\pa y_1}\right)^N\left|{\det\frac{\pa^2 X_1}{\pa \xt\pa\yt}}\right|^{-1}.
\end{split}
\ee
\end{lemma}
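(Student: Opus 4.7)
The plan is to reduce $\chi$ step by step using the coordinate conventions \eqref{xcoords}, \eqref{y-coords} and the algebraic identity \eqref{jacob-sm}, and then combine the result with the expression for $|\det Q|^{1/2}$ already obtained in Lemma~\ref{lem:detQ}.

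First I would simplify the two factors sitting in $\chi$ that do not involve $Q$. Because $\Phi_t^{(1)}=0$ in the selected $x$-coordinates (see \eqref{xcoords}), the rows of $\Phi_t$ that contribute to $G^{\s}_{jk}=\Phi_{t_j}\cdot\Phi_{t_k}$ are exactly those of $\Phi_t^{(2)}$, so
\[
\det G^{\s}=\bigl(\det\Phi_t^{(2)}\bigr)^2.
\]
On the other hand, \eqref{jacob-sm} gives
\[
\det(\Psi\circ\Phi)_{tt}=|d\Psi|^N\bigl(\det\Phi_t^{(2)}\bigr)^2\det\dsf,
\]
so the $\Phi_t^{(2)}$-factors cancel and
\[
\frac{\det G^{\s}}{\det(\Psi\circ\Phi)_{tt}}=\frac{1}{|d\Psi|^N\det\dsf}.
\]

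Next I would identify $|d\Psi|$ in terms of the quantities in the statement. The normalization in \eqref{y-coords} is $(\Psi\circ\Phi)_{y_1}=1$, while \eqref{xcoords} says $d\Psi=(|d\Psi|,0,\dots,0)$. Using $X(\xt,y)=\Phi(T(\xt,y),y)$ and the identity $\Phi_t^{(1)}=0$ at the base point, the chain rule gives $\pa X_1/\pa y_1=\pa\Phi^{(1)}_1/\pa y_1$ at $(0,0)$. Therefore $1=(\Psi\circ\Phi)_{y_1}=|d\Psi|\,\pa X_1/\pa y_1$, so
\[
|d\Psi|=\bigl(\pa X_1/\pa y_1\bigr)^{-1},
\]
which (combined with the previous display) yields
\[
\frac{\det G^{\s}}{\det(\Psi\circ\Phi)_{tt}}=\frac{(\pa X_1/\pa y_1)^N}{\det\dsf}.
\]

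Finally, I would invoke Lemma~\ref{lem:detQ} to substitute
\[
\frac{1}{|\det Q|^{1/2}}=\bigl|(\pa X_1/\pa y_1)^N\det\dsf\bigr|^{1/2}\,\Bigl|\det\tfrac{\pa^2 X_1}{\pa\xt\pa\yt}\Bigr|^{-1},
\]
and multiply the two resulting factors. The $|\det\dsf|^{1/2}$ in the numerator and denominator cancel, and the two factors $|(\pa X_1/\pa y_1)^N|^{1/2}$ combine, delivering exactly the right-hand side of \eqref{dets-only-v2}. (The positivity $\pa X_1/\pa y_1>0$ from \eqref{x1y1-ders} lets one drop the absolute value inside the first factor.)

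The only step that requires any care is the identification of $|d\Psi|$ with $(\pa X_1/\pa y_1)^{-1}$; everything else is bookkeeping. The main obstacle is therefore keeping the various evaluations straight: $\pa X_1/\pa y_1$ is a derivative of $X_1=\Phi^{(1)}_1\circ(T,\mathrm{id})$ at $(\xt,y)=(0,0)$, and this equals $\pa\Phi^{(1)}_1/\pa y_1$ only because $\Phi_t^{(1)}(0,0)=0$. Once this is established, the computation collapses to the cancellation described above.
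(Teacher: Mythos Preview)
Your proof is correct. The route differs slightly from the paper's: the paper observes directly from \eqref{YZ-diff} that $\det Q\cdot\det(\Psi\circ\Phi)_{tt}=(\det C)^2$, so that $\chi=(\det G^{\s})^{1/2}/|\det C|$, and then substitutes the intermediate formula \eqref{detC} for $|\det C|$ obtained in the proof of Lemma~\ref{lem:detQ}. You instead expand $\det(\Psi\circ\Phi)_{tt}$ via \eqref{jacob-sm}, identify $|d\Psi|=(\partial X_1/\partial y_1)^{-1}$, and invoke only the \emph{stated} formula \eqref{Q-final} of Lemma~\ref{lem:detQ}. The paper's version is marginally shorter because it reuses the $\det C$ computation already done, while your version has the modest advantage of depending only on the statement of Lemma~\ref{lem:detQ} rather than on an equation buried in its proof. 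Both are just bookkeeping with the same underlying identities.
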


Now we can state our main result.
\begin{theorem} Suppose 
\begin{enumerate}
\item $\CB$, $g$, and $\ik$ satisfy the assumptions in Section~\ref{sec:assns}, 
\item Conditions G1--G3 in Section~\ref{sec:prelims} are satisfied, 
\item The pair $(x_0,\yor_0)$, $\tilde y_0\in \tilde \T_{x_0}$, is generic for the sampling matrix $D$, and
\item There is $\xi_0\in T_{x_0}^*\us$, $\xi_0\not=0$, such that $\xi_0\Phi_{(t,y)}=(0,\Theta)$, where $\Theta$ is conormal to $\Gamma:=\text{sing\hspace{1.0pt}supp}(g)$ at $y_0$.
\end{enumerate}
Then one has
\be\label{DTB-nz}\begin{split}
\lim_{\e\to0} \e^{\kappa}  \check f_\e(x_\e)
= &C_1\int \hat\ik\left(\bp,\left(\frac{\pa X_1}{\pa y_1}\right)^{-1}\check x_1-p\right)\left(c_1^+(p-i0)^{-\kappa}+c_1^-(p+i0)^{-\kappa}\right)dp, \\ 
C_1=&(2\pi)^{N/2}w(x_0,y_0)|\det\dsf|^{1/2}\left(\frac{\pa X_1}{\pa y_1}\right)^{N/2}\left|\det\frac{\pa^2 X_1}{\pa \xt\pa\yt}\right|^{-1}, \\
c_1^\pm=&\frac{\Gamma(\kappa)}{2\pi}\tilde B_0(y_0,\bp)\fs^+(y_0)e(\mp (\bt_0-s_0)),\ \kappa>0,
\end{split}
\ee
and
\be\label{DTB-z}\begin{split}
\lim_{\e\to0} \check f_\e(x_\e)=&\check f(x_0^{\text{int}})+C_1c_1\int_{-\infty}^0 \hat\ik\left(\bp,\left(\frac{\pa X_1}{\pa y_1}\right)^{-1}\check x_1-p\right)dp,\\
c_1:=&i\tilde B_0(y_0,\bp)\fs^+(y_0) e(-(\bt_0-s_0)),\ \kappa=0.
\end{split}
\ee
Suppose, in addition, that $g=\R f$, and $f$ is given by \eqref{f-orig}, \eqref{f-lim}. Then 
\be\label{DTB-nz-v2}\begin{split}
\lim_{\e\to0} \e^{\kappa}  \check f_\e(x_\e)
= &C_2\int \hat\ik\left(\bp,\left(\frac{\pa X_1}{\pa y_1}\right)^{-1}\check x_1-p\right)\left(c_2^+(p-i0)^{-\kappa}+c_2^-(p+i0)^{-\kappa}\right)dp, \\ 
C_2=&(2\pi)^N b(x_0,y_0)w(x_0,y_0)\left(\frac{\pa X_1}{\pa y_1}\right)^N\left|{\det\frac{\pa^2 X_1}{\pa \xt\pa\yt}}\right|^{-1}, \\
c_2^\pm=&\frac{\Gamma(\kappa)}{2\pi}\tilde B_0(y_0,\pm\bp)\tilde f^\pm(x_0)e\left(\mp\bigl( \bt_0-s_0-\frac{n-1}2\bigr)\right),\ \kappa>0,
\end{split}
\ee
and 
\be\label{DTB-z-v2}\begin{split}
\lim_{\e\to0} \check f_\e(x_\e)=&\check f(x_0^{\text{int}})
+C_2c_2\int_{-\infty}^0 \hat\ik\left(\bp,\left(\frac{\pa X_1}{\pa y_1}\right)^{-1}\check x_1-p\right) dp,\\
c_2:=&i\tilde B_0(y_0,\bp)\tilde f^+(x_0)e\left(-\bigl( \bt_0-s_0-\frac{n-1}2\bigr)\right),\ \kappa=0.
\end{split}
\ee
\end{theorem}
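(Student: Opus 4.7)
The strategy is to assemble the main formula from the pieces already developed in Sections~\ref{sec:fplt}--\ref{sec:spdtb} and then translate the analytic constants into the geometric quantities appearing in the statement via Lemmas~\ref{lem:detQ} and~\ref{lem:chi}.

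First I would decompose $\check f_\e = f_\e^\os + f_\e^\tp$ according to the splitting of $\T_{x_\e}$ by $\Omega_1$, $\Omega_2$ in \eqref{two-sets}. Next, inside $f_\e^\os$ I would further split the symbols of $\CB$ and $g$ each into a top-order piece and a lower-order remainder. The top-order/top-order product is handled by Section~\ref{sec:fplt}: the sum in $j$ is reduced by Lemma~\ref{key-sum-lemma} to $\Mpsi$, equidistribution (genericity of $(x_0,\yor_0)$) gives the averaging \eqref{lim}, and \eqref{pPsi-props} converts the average of $\Mpsi_1$ to $\MPsi$, yielding \eqref{f1lim-Psi}. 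The three remaining mixed products (top-low, low-top, low-low) are all controlled by the arguments of Section~\ref{sec:fplots}, which produce a bound of the form $o(\e^{-\kappa})$ on $f_\e^\os$, so these do not contribute in the limit.

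For $f_\e^\tp$ the Section~\ref{sec:spdtb} estimates \eqref{f2-est} and \eqref{f2-est-k0} apply: when $\kappa>0$ the contribution is $O(A^{-2\kappa})$ and disappears as $A\to\infty$; when $\kappa=0$ the discrete and continuous versions differ by $O(A^{-2}\ln A)$, while \eqref{f2c-lim-alt} identifies $\lim_{\e\to0} f^\tp(x_\e)$ with $(\R^*\CB g)(x_0^{\text{int}})=\check f(x_0^{\text{int}})$. Combining with the $f_\e^\os$ limit and letting $A\to\infty$ gives \eqref{f1lim-1}. Reducing the $\yt$-integral to a one-dimensional integral via polar coordinates and the radial substitution $q=|v|^2$ produces \eqref{f1lim-2} and identifies the limit with the function $J(h)$ of \eqref{J-def}. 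Inserting \eqref{J-ft-v2} and inverting the Fourier transform term-by-term (using the formula for $\CF_{1d}^{-1}(\la_\pm^{\kappa-1})$ as a homogeneous distribution) yields \eqref{DTB-v1} when $\kappa>0$; for $\kappa=0$ the distributional product $\tilde b(\la)\tilde\CA(\la)(\la-i0)^{-N/2}$ must be handled carefully, and condition \eqref{c-ratios-req} is precisely the cancellation that kills the would-be logarithmic term, leaving the single-sided expression \eqref{DTB-v2}.

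To reach \eqref{DTB-nz}, \eqref{DTB-z} as stated, I would apply Lemma~\ref{lem:detQ} to replace $|\det Q|^{-1/2}$ in $C_1$ by $|\det\dsf|^{1/2}(\pa X_1/\pa y_1)^{N/2}|\det(\pa^2 X_1/\pa\xt\pa\yt)|^{-1}$, and use \eqref{x1y1-ders} to convert $\pa Y_1/\pa x_1$ into $(\pa X_1/\pa y_1)^{-1}$ inside the argument of $\hat\ik$. Finally, for the specialization $g=\R f$, I would identify $\fs^\pm$ via the stationary phase formula \eqref{f-lim-coefs}; the signature $\text{sgn}((\Psi\circ\Phi)_{tt})=-(n-1)$ is fixed by assumption~G3 together with the $\dsf$-negative-definite convention, producing the phase $e(\mp(n-1)/2)$ that combines with $e(\mp(\bt_0-s_0))$ to give $e(\mp(\bt_0-s_0-(n-1)/2))$ in \eqref{DTB-nz-v2}, \eqref{DTB-z-v2}. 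Lemma~\ref{lem:chi} then collapses the determinantal factor $\chi$ in $C_2$ to the displayed geometric form.

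The main technical obstacle I anticipate is the $\kappa=0$ case: one must verify that \eqref{c-ratios-req} is exactly the algebraic condition ensuring $\Upsilon$ decays at $+\infty$ (so that $J(h)$ in \eqref{J-def} is absolutely convergent) and that the resulting $\mu(\la)$ has the simple form $(\la-i0)^{-1}$ up to a scalar; this requires aligning the present sign/exponent conventions with the earlier reference~\cite{kat20b}, where $s_0$ is shifted by one. All other steps are either bookkeeping, an application of already-stated lemmas, or standard Fourier-analytic manipulations.
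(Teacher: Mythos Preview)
Your proposal is correct and follows essentially the same route as the paper: the decomposition $\check f_\e=f_\e^{\os}+f_\e^{\tp}$, the top-order/lower-order symbol splitting handled respectively by Sections~\ref{sec:fplt} and~\ref{sec:fplots}, the $f_\e^{\tp}$ estimates of Section~\ref{sec:spdtb}, the polar-coordinate reduction to $J(h)$ and the Fourier computation \eqref{J-ft-v2}--\eqref{DTB-v2}, and finally the geometric translation via Lemmas~\ref{lem:detQ} and~\ref{lem:chi}. Your identification of the $\kappa=0$ case as the delicate point and of the $s_0\mapsto s_0+1$ convention shift relative to \cite{kat20b} is exactly what the paper flags as well.
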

Recall that $\check f(x):=\R^*\CB g(x)$ denotes the exact reconstruction from continuous data. 
The second term on the right in \eqref{DTB-z} equals zero for all $\check x_1>c$. Because $\ik$ is normalized: $\int\hat\ik(\bp,p)dp=1$, the second term equals $C_1c_1$ for all $\check x_1<-c$. Here $c>0$ is sufficiently large, and we used that $\pa X_1/\pa y_1>0$ (cf. \eqref{x1y1-ders}). By \eqref{CTB-st3}, the product $C_1c_1$ is precisely the jump of the exact reconstruction $\check f(x)$ across $\s$ at $x_0$: $C_1c_1=\check f(x_0^{\text{ext}})-\check f(x_0^{\text{int}})$, see \eqref{f2c-lim-alt} and \eqref{int-ext}. Thus, the right-hand side of \eqref{DTB-z} equals to $\check f(x_0^{\text{int}})$ if $\check x_1>c$, and to $\check f(x_0^{\text{ext}})$ -- if $\check x_1<-c$. 
This shows that \eqref{DTB-z} describes a smooth transition of the discrete reconstruction  $\check f_\e(x_\e)$ from the value $\check f(x_0^{\text{int}})$ on the interior side of $\s$ to the value $\check f(x_0^{\text{ext}})$ on the exterior side of $\s$. Loosely speaking, the transition happens over a region of size $O(\e)$:
\be\label{transition}\begin{split}
\text{DTB}(\check x)=\lim_{\e\to0} \check f_\e(x_0+\e \check x)=\begin{cases} \check f(x_0^{\text{int}}), &\check x_1>c,\\
\check f(x_0^{\text{ext}}),& \check x_1<-c.
\end{cases}
\end{split}
\ee
Then the DTB is a ``stretched'' version of the abrupt jump of $\check f$ across $\s$ in the continuous case. The DTB expression in \eqref{DTB-z-v2} is a particular case of the one in \eqref{DTB-z}, so the same intuition applies to the former. See also Section 6 of \cite{kat20a} for a similar discussion in the setting of quasi-exact inversion of the GRT in $\br^3$.


\section{Computing the CTB}\label{sec:compctb}

When describing the leading singularity of a distribution at a point, the following definition (which is a slight modification of the one in \cite{kat99_a}) is convenient.

\begin{definition}[\cite{kat99_a}]\label{lead-sing}
Given a distribution $f\in{\mathcal D}'(\br^n)$ and a point $x_0\in\br^n$, suppose there exists a distribution $f_0\in{\mathcal D}'(\br^n)$ so that for some $a\in\br$ the following equality holds 
\be\label{losing}\begin{split}
\lim_{\e\to0} \e^{-a}\int f(x_0+\e\check x)\pa_{\check x}^m\om(\check x)d\check x=&\int f_0(\check x)\pa_{\check x}^m\om(\check x)d\check x,\\ 
\forall m\in\N_0^n,\ |m|=&\max(0,\lceil a^+\rceil),
\end{split}
\ee
for any $\om\in\coi(\br^n)$. Then we call $f_0$ the leading order singularity of $f$ at $x_0$.
\end{definition}

From \eqref{Psi_c-expr}--\eqref{Psi_c-f-FT} and \eqref{recon} it follows that
\be\label{CTB-st1}\begin{split}
&(\R^*\CB g)(x)=\frac1{2\pi}\int_\br \int_{\br^N} J(y,\la) e^{-i\la P(y)}w(x,y)(\det G^{\T}(x,\yt))^{1/2}d\yt d\la, \\
&J(y,\la)=\tilde B_0(y,P'(y))\fs^+(y)\la_+^{\bt_0-s_0-1}+\tilde B_0(y,-P'(y))\fs^-(y)\la_-^{\bt_0-s_0-1}+\tilde R(y,\la), \\
&\tilde R\in S^{c-1}(\vs\times\br),\ y=Y(\yt,x)\in\vs,\ 
c=\max(\bt_0-s_0-1,\bt_1-s_0,\bt_0-s_1).
\end{split}
\ee

By construction, $P(Y_0(\yt))=\bp\cdot(Y_0(\yt)-Z(\yt))$. By \eqref{YZ-diff}, 
\be
\pa_{\yt}P(Y_0(\yt))|_{\yt=0}=0,
\ee
the Hessian of $P(Y_0(\yt))$ is non-degenerate at $\yt=0$, and
\be\label{Hess-phase}
\frac{\pa^2 P(Y_0(\yt))}{(\pa\yt)^2}=-Q.
\ee
Therefore the stationary point $\yt_*(x)$ of the phase $P(Y(\yt,x))$ is a smooth function of $x$ in a neighborhood of $x=x_0$. Set $x=x_\e=x_0+\e\check x$. 

By \eqref{frst-order-eps}, \eqref{PM11}, and \eqref{Hess-phase}, application of the stationary phase method to the integral with respect to $\yt$ in \eqref{CTB-st1} implies: 
\be\label{limf-inner-expl}\begin{split}
W_\e(\check x,\la)&:=\int_{\br^N} J(y,\la) e^{-i\la P(y)}w(x_\e,y)(\det G^{\T}(x_\e,\yt))^{1/2}d\yt\\
&=\biggl[(2\pi)^{\frac N2}\left(\frac{w(x_0,y_0)}{|\det Q|^{1/2}}+O(\e)\right)\biggl((\tilde B_0(y_0,\bp)\fs^+(y_0)+O(\e))e\left(-\frac N2\right)\la_+^{\kappa-1}\\
&\hspace{2cm}+(\tilde B_0(y_0,-\bp)\fs^-(y_0)+O(\e))e\left(\frac N2\right)\la_-^{\kappa-1}\biggr)+\tilde R_\e(\check x,\la)\biggr]\\
&\quad \times e^{-i\la \e (h+O(\e))},\quad
\tilde R_\e\in S^{\kappa-1-c}(U\times\br),\ h=\frac{\pa Y_1}{\pa x_1}\check x_1,\ \kappa>0,
\end{split}
\ee 
for some $c>0$ and any open, bounded set $U\subset\br^n$. The expression $h+O(\e)$ in the exponent arises due to \eqref{frst-order-eps}, \eqref{PM11}, \eqref{thyo}, and because $P(y)=y_1-\psi(y_\perp)$, where $\psi(0)=0$ and $\psi'(0)=0$. The constants $c_{m,\al}$ that control the derivatives of $\tilde R_\e$ in \eqref{symbol-ineqs} can be selected independently of $\e$ for all $\e>0$ sufficiently small. Likewise, it follows from \eqref{J-bnd} that for any open, bounded $U\subset \br^n$ and $c'>0$ there exists $c_m(U,c')$ such that
\be\label{We-bnd}
|\pa_{\check x}^m W_\e(\check x,\la)|\le c_m(U,c'),\ (x,\la)\in U\times[0,c'],m\in\N_0^n, 
\ee
for all $\e>0$ sufficiently small.

The analogue of \eqref{Psi_c-f-FT}, \eqref{Psi_c-f} becomes
\be\label{CTB-st2}\begin{split}
&\lim_{\e\to0}\e^{\kappa}(\R^*\CB g)(x_\e)
= C_1\CF_{1d}^{-1}(\mu(\la))(h)=C_1\left(c_1^+(h-i0)^{-\kappa}+c_1^-(h+i0)^{-\kappa}\right),\ \kappa>0,
\end{split}
\ee
where $\mu(\la)$ is the same as in \eqref{J-ft-v2}, and $C_1$ and $c_1^\pm$ are the same as in \eqref{DTB-v1}. The limit in \eqref{CTB-st2} is understood in the sense of distributions with test functions $\om\in\coi(\br^n)$ (cf. \cite{kat99_a} and \eqref{losing}). 

When $a=-\kappa<0$, $m=0$, and it is not necessary to require derivatives in \eqref{losing}. Thus, the limit in \eqref{CTB-st2} is understood in the sense of taking the limit as $\e\to0$ on both sides of the following equality:
\be\label{limf-W}\begin{split}
\int_{\br^n} \e^{\kappa}(\R^*\CB g)(x_0+\e\check x)\om(\check x)d\check x=
\frac1{2\pi}\int_\br\int_{\br^n} \e^{\kappa-1}W_\e(\check x,\sigma/\e)\om(\check x)d\check x d\sigma.
\end{split}
\ee 
The right-hand side of \eqref{CTB-st2} follows from \eqref{limf-inner-expl}, the dominated convergence theorem, and \eqref{Psi_a}. The dominated convergence theorem can be applied because 
\begin{enumerate}
\item The first term inside the brackets in \eqref{limf-inner-expl} is absolutely integrable at $\la=0$ since $\kappa>0$, and 
\item The integrand is rapidly decreasing as $\la\to\infty$. This follows from integration by parts with respect to $\check x_1$ on the right in \eqref{limf-W} and using \eqref{We-bnd},  $\om\in\coi(\br^n)$, and ${\pa Y_1}/{\pa x_1}\not=0$ (cf. \eqref{x1y1-ders}).
\end{enumerate}

If $\kappa=0$, the function $\mu(\la)$ is no longer integrable at the origin, and $|m|=1$ in \eqref{losing}. Hence we use test functions of the form $\pa_{\check x_j} \om(\check x)$, $1\le j\le n$. This makes the same argument as in the case $\kappa>0$ to work, but the price to pay is that the CTB is determined up to a constant. See the paragraph following Theorem 4.6 of \cite{kat20b} for a similar phenomenon. Using condition \eqref{c-ratios-req} in \eqref{limf-inner-expl} implies 
\be\label{CTB-st3}\begin{split}
&\lim_{\e\to0}(\R^*\CB g)(x_\e)-C_1c_1(-1/2)\text{sgn}(\check x_1)=\text{const},\ \kappa=0,
\end{split}
\ee
where $c_1$ is the same as in \eqref{DTB-v2}. 

Comparing \eqref{DTB-v1} and \eqref{DTB-v2} with \eqref{CTB-st2} and \eqref{CTB-st3}, respectively, we see that the DTB is the convolution of the CTB with the scaled classical Radon transform of the interpolating kernel. The difference between $p_-^0$ in \eqref{DTB-v2} and $(-1/2)\text{sgn}(p)$ in \eqref{CTB-st3} is due to the nonuniqueness (up to a constant).

\appendix

\section{Proofs of lemmas \ref{lem:conorm}--\ref{lem:delta}}\label{sec:prfPc}

\subsection{Proof of Lemma \ref{lem:conorm}}\label{subsec:conorm}
The expression for $g$ is obtained directly from \eqref{g-def}, \eqref{g-lim}:
\be\label{g-FT}\begin{split}
g(y)=&G(y,P(y)),\\
G(y,p):=&\CF_{1d}^{-1}\left(\fs^+(y)\la_+^{-(s_0+1)}+\fs^-(y)\la_-^{-(s_0+1)}+\tilde R(y,\la)\right)(p),
\end{split}
\ee
where $\tilde R\in S^{-(s_1+1)}(\vs\times\br)$, and $\CF_{1d}^{-1}$ is the one-dimensional inverse Fourier transform acting with respect to $\la$. By the properties of $\tilde R$, we get by computing the inverse Fourier transform
\be\label{g-Psi_nN}
G(y,p) = \fs^+(y)\Psi_{-s_0}^+(p)+\fs^-(y) \Psi_{-s_0}^-(p)+R(y,p).
\ee
Here (see \cite{gs}, p. 360)
\be\label{Psi_a}
\Psi_a^{\pm}(p)=\CF_{1d}^{-1}(\la_{\pm}^{a-1})(p)=\frac{\Gamma(a)}{2\pi}e(\mp a)(p\mp i0)^{-a},\ a\not=0,-1,-2,\dots,
\ee
and $R(p,y)$ is the remainder. By Theorem 5.12 in \cite{Abels12}, $R$ satisfies
\be\label{g-rmd}
|\pa_y^m\pa_p^l R(y,p)|\le c_{m,l}\begin{cases} |p|^{s_1-l},& s_1<l,\\
1+|\log|p||,& s_1=l,\\ 1,& s_1>l,\end{cases}\ m\in\N_0^n,l\in\N_0,y\in\vs,p\not=0,
\ee
for some $c_{m,l}>0$.
Combining \eqref{g-FT}--\eqref{g-rmd} gives the leading singular behavior of $g$: 
\be\label{g-ls-v2}\begin{split}
g(y)\sim & a^+(\bar y)P_+^{\gao}(y)+a^-(\bar y)P_-^{\gao}(y),\\
a^\pm(\bar y)= & \frac{\Gamma(-s_0)}{2\pi}\left(\fs^+(\bar y)e(\pm s_0)+\fs^-(\bar y)e(\mp s_0)\right),\ \bar y:=(\psi(y_\perp),y_\perp)\in\Gamma,\ s_0\not\in\N.
\end{split}
\ee
If $s_0\in\N$, condition \eqref{s-except} implies $\fs^+(\bar y)\la_+^{-(s_0+1)}+\fs^-(\bar y)\la_-^{-(s_0+1)}\equiv \fs^+(\bar y)\la^{-(s_0+1)}$, so (see \cite{gs}, p. 360)
\be\label{g-Psi_N}\begin{split}
g(y)=&\fs^+(\bar y)\Psi_{-s_0}(P(y))+R(P(y),y),\\ 
\Psi_{-s_0}(p)=&\CF_{1d}^{-1}(\la^{-(s_0+1)})(p)=\frac12\frac{(-i)^{s_0+1}}{s_0!}p^{s_0}\text{sgn}(p),\ s_0\in\N.
\end{split}
\ee
An equation of the kind \eqref{g-ls-v2} still holds:
\be\label{g-ls-v3}\begin{split}
g(y)\sim & a^+(\bar y)P_+^{\gao}(y)+a^-(\bar y)P_-^{\gao}(y),\
a^\pm(\bar y)=\frac{\fs^+(\bar y)}{2s_0!}e(\mp (s_0+1)),\ s_0\in\N.
\end{split}
\ee
Combining \eqref{g-FT}--\eqref{g-Psi_N} and using that \eqref{g-Psi_nN} and \eqref{g-Psi_N} can be differentiated proves \eqref{Psic-ass-g}.

From the second equation in \eqref{g-FT}, \eqref{g-Psi_N}, and \eqref{hz-sp} we get also
\be\label{conorm}
\pa_y^mG(y,\cdot)\in 
\begin{cases}
C_*^{s_0}(\br) \text{ for any }m\in\N_0^n,\\ 
C_0^{s_0}(\br) \text{ for any }m\in\N_0^n\text{ if }  s_0\in\mathbb N.
\end{cases}
\ee
Together with the first equation in \eqref{g-FT} this proves \eqref{g-continuity}.

If $\fs^\pm\equiv0$, the result follows from the properties of $\tilde R(y,\la)$ and \eqref{hz-sp}, because $s_1\not\in\N$.


\subsection{Proof of Lemma \ref{lem:P_c-ass}}\label{subsec:prfPc}
From \eqref{psDO} and \eqref{g-def},
\be\label{Psi_c-expr}\begin{split}
(\CB g)(y)&=\frac1{(2\pi)^{n+1}}\int_{\br^n} \tilde B(y,\eta)\int_{\vs}\int_{\br}\fs(z,\la)e^{-i\la P(z)+i (z-y)\cdot\eta}d\la dz d\eta.
\end{split}
\ee
As is standard (see e.g., \cite{trev2}), set $u=\eta/\la$ and consider the phase function
\be
W(z,u,y):=P(z)-(z-y)\cdot u.
\ee
The only critical point $(z_0,u_0)$ and the corresponding Hessian $H$ are given by
\be\label{crit-pt}
z_0=y,\ u_0=P'(y),\ H=\bma P''(y) & -I \\ -I & 0\ema.
\ee
Clearly, $|\det H|=1$ and $\text{sgn}\, H=0$. By the stationary phase method we get using \eqref{B-lim} and \eqref{g-lim}
\be\label{J-st1}\begin{split}
J(y,\la):=&\frac{|\la|^n}{(2\pi)^n}\int_{\br^n}\int_{\vs} \tilde B(y,\la u)\fs(z,\la)e^{-i\la(P(z)-P(y) - (z-y)\cdot u)}dz du\\
=&\tilde B(y,\la P'(y))\fs(y,\la)+\tilde R(y,\la),\ \tilde R\in S^{\bt_0-s_0-2}(\vs\times\br).
\end{split}
\ee
The fact that $u$-integration is over an unbounded domain does not affect the result, because integrating by parts with respect to $z$ we obtain a function that decreases rapidly as $|u|\to\infty$ and $\la\to\infty$.

Similarly, considering the integral with respect to $z$ in \eqref{Psi_c-expr} and integrating by parts using property g1 of $g$ in Section~\ref{sec:assns} we get a function that rapidly decreases as $|\eta|\to\infty$ provided that $|\la|$ is bounded. Therefore, by property $\CB2$, 
\be\label{J-bnd}
\pa_y^m J(y,\la)\in L^\infty(\vs\times[0,c]),\ m\in\N_0^n,
\ee
for any $c>0$.

Substituting \eqref{J-st1} into \eqref{Psi_c-expr} and using \eqref{B-lim}, \eqref{g-lim} leads to
\be\label{Psi_c-f-FT}\begin{split}
(\CB g)(y)=&\frac1{2\pi}\int J(y,\la) e^{-i\la P(y)} d\la \\
=&\CF_{1d}^{-1}\biggl(\tilde B_0(y,P'(y))\fs^+(y)\la_+^{\bt_0-s_0-1}\\
&\hspace{1cm}+\tilde B_0(y,-P'(y))\fs^-(y)\la_-^{\bt_0-s_0-1}+\tilde R(y,\la)\biggr)(P(y)),\\
\tilde R\in &S^{c-1}(\vs\times\br),\ c:=\max(\bt_0-s_0-1,\bt_1-s_0,\bt_0-s_1)<\bt_0-s_0. 
\end{split}
\ee
The factor $|\la|^n$ in \eqref{J-st1} cancels because $|\la|^n du=d\eta$. Computing the asymptotics of the inverse Fourier transform as $P=P(y)\to0$ and using that $\tilde B_0(y,\pm P'(y))\in\coi(\vs)$ and $P(y)\equiv0$ on $\Gamma$ gives
\be\label{Psi_c-f}\begin{split}
(\CB g)(y)
=&c_1^+\Psi_{\bt_0-s_0}^+(P)+c_1^-\Psi_{\bt_0-s_0}^-(P)+R(y,P),\ P=P(y),\\ 
c_1^\pm=&\tilde B_0(\bar y,\pm P'(\bar y))\fs^\pm(\bar y),\ \bar y:=(\psi(y_\perp),y_\perp)\in\Gamma.
\end{split}
\ee
Recall that $\Psi_a^{\pm}$ are defined in \eqref{Psi_a}.
Here $R(y,p)$ is the remainder, which, by Theorem 5.12 in \cite{Abels12}, \eqref{J-bnd}, and the smoothness of $\tilde B_0$, satisfies
\be\label{Bg-rmd}
|\pa_y^m\pa_p^l R(y,p)|\le c_{m,l}\begin{cases} |p|^{c-l},& c<l,\\
1+|\log|p||,& c=l,\\ 1,& c>l,\end{cases}\ m\in\N_0^n,l\in\N_0,
\ee
for some $c_{m,l}>0$. The constant $c$ here is the same as in \eqref{Psi_c-f-FT}. The estimate \eqref{Psic-ass-Bg} follows from \eqref{Psi_c-f}, \eqref{Bg-rmd}. 

If $\kappa=0$, condition \eqref{c-ratios-req} implies 
\be\label{Bvsum-k0}\begin{split}
&\tilde B_0(\bar y,P'(\bar y))\fs^+(\bar y)\la_+^{\bt_0-s_0-1}
+\tilde B_0(\bar y,-P'(\bar y))\fs^-(\bar y)\la_-^{\bt_0-s_0-1}\\
&=\tilde B_0(\bar y,P'(\bar y))\fs^+(\bar y)(\la-i0)^{\bt_0-s_0-1},
\end{split}
\ee
and
\be\label{Bg-k0}\begin{split}
(\CB g)(y)
=&\tilde B_0(\bar y,P'(\bar y))\fs^+(\bar y)\begin{cases}\frac{e(-\bt_0+s_0+1)}{\Gamma(-\bt_0+s_0+1)}P(y)_-^{-(\bt_0-s_0)},& \bt_0-s_0\not\in\N\\
0,& \bt_0-s_0\in\N\end{cases}\\
&+R(y,P(y)),\ y\in\vs\setminus\Gamma.
\end{split}
\ee
This proves \eqref{Psic-ass-extra}.

\subsection{Proof of Lemma \ref{lem:gel}}\label{subsec:gel}

Using that $l>s_0$ and $\ik$ has $\lceil \bt_0^+ \rceil$ bounded derivatives and $\ik$ is exact to the degree $\lceil \bt_0 \rceil$, we get with any $0\le M\le l$:
\be\label{ge-Taylor}\begin{split}
g_\e^{(l)}(y)
&=\sum_j\pa_{y_1}^l\ik\left(\frac{y-\vy^j}\e\right)\left(g(\vy^j)-\sum_{|m|\le M-1}\frac{(\vy^j-y)^m}{m!}g^{(m)}(y)\right)\\
&=\e^{-l}\sum_j(\pa_{\check y_1}^l\ik) \left(\frac{y-\vy^j}\e\right) \sum_{|m|=M}R_m(\vy^j,y)(\vy^j-y)^{m},
\end{split}
\ee
where the remainder satisfies
\be\label{Taylor-rem}\begin{split}
R_m(\vy^j,y)&= \frac{|m|}{m!}\int_0^1 (1-t)^{|m|-1}g^{(m)}(y+t(\vy^j-y))dt,\\ 
|R_m(\vy^j,y)|&\le \frac1{m!}\max_{|m'|=|m|,\check y\in\text{supp} \ik}|g^{(m')}(y+\e \check y)|.
\end{split}
\ee

To prove the top case in \eqref{F-ineq-lot} select $\varkappa_1>0$ so that $|P(y)|\ge \varkappa_1\e$ and $\ik((y-w)/\e)\not=0$ implies $|P(w)|\ge \e$. By Lemma~\ref{lem:conorm}, this ensures that for each $m\in\N_0^{n}$ there exists $c(m)>0$ such that
\be\label{maxder}\begin{split}
\max_{\check y\in\text{supp} \ik}|g^{(m)}(y+\e \check y)|\le c(m)\begin{cases} |P(y)|^{s_0-|m|},&|m|>s_0,\\
1,&|m|\le s_0,\end{cases},\ |P(y)|\geq \varkappa_1\e.
\end{split}
\ee
Set $M=l$ in \eqref{ge-Taylor}. Then \eqref{maxder} together with the bottom line in \eqref{Taylor-rem} prove the result.

To prove the middle case in \eqref{F-ineq-lot}, set $M=\lfloor s_0 \rfloor$ in \eqref{ge-Taylor}.
If $s_0\in\mathbb N$, \eqref{g-continuity} and \eqref{Taylor-rem} imply that $R_m=O(1)$, thereby proving the assertion. If $s_0\not\in\N$, the remainder can be modified as follows
\be\label{ge-Taylor-alt}\begin{split}
\tilde R_m(\vy^j,y)&= \frac{|m|}{m!}\int_0^1 (1-t)^{|m|-1}[g^{(m)}(y+t(\vy^j-y))-g^{(m)}(y)]dt=O(\e^{\{s_0\}}).
\end{split}
\ee
Here we have used \eqref{holder} with $r=s_0$. Since $l>M$, we can replace $R_m$ with $\tilde R_m$ in \eqref{ge-Taylor} without changing the equality, and the desired inequality follows.

The bottom case in \eqref{F-ineq-lot} follows by setting $M=l$ in \eqref{ge-Taylor} and noticing that  \eqref{g-continuity} and \eqref{Taylor-rem} imply $R_m=O(1)$.

If $\fs^\pm\equiv0$, the same argument as above applies with $s_0$ replaced by $s_1$. The only change is that there is no need to consider the case $s_1\in\N$.

\subsection{Proof of Lemma \ref{lem:delta}}\label{subsec:delta}
Since $\varkappa_1>0$ is the same as in the proof of Lemma \ref{lem:gel}, $|P(y)|\ge \varkappa_1\e$ and $\ik((y-w)/\e)\not=0$ imply $|P(w)|\ge \e$. Similarly to \eqref{ge-Taylor}, using the properties of $\ik$ we obtain
\be\label{ge-Taylor-v2}\begin{split}
g_\e^{(l)}(y)
&=\sum_j\pa_{y_1}^l\ik\left(\frac{y-\vy^j}\e\right)\biggl(\sum_{|m|\le M-1}\frac{(\vy^j-y)^m}{m!}g^{(m)}(y)\\
&\hspace{4cm}+\sum_{|m|=M}R_m(\vy^j,y)(\vy^j-y)^{m}\biggr)\\
&=g^{(l)}(y)+\sum_j\pa_{y_1}^l\ik\left(\frac{y-\vy^j}\e\right)\sum_{|m|=M}R_m(\vy^j,y)(\vy^j-y)^{m},\ l<M\le\lceil\bt_0\rceil
\end{split}
\ee
The term $g^{(l)}(y)$ on the right in \eqref{ge-Taylor-v2} is the only term from the Taylor polynomial that remains after the summation with respect to $j$. In particular, all the terms corresponding to $l< |m| \le M-1$ are converted to zero, because $\ik$ is exact to the degree $\lceil\bt_0\rceil$, and 
\be\label{conv-to-0}
\sum_j\pa_{y_1}^l\ik\left(\frac{y-\vy^j}\e\right) (\vy^j-y)^m=\pa_{w_1}^l (w-y)^m|_{w=y}=0,\ l< |m| \le M-1.
\ee
Using \eqref{ge-Taylor-v2} with $M=l+1$ and appealing to \eqref{Taylor-rem}, \eqref{maxder} proves \eqref{F-ineq-1}. Indeed, recall that $l\ge\lfloor s_0^-\rfloor$, so $M=l+1\ge s_0$.
If $s_0\not\in\N$, then $M>s_0$, and the top case in \eqref{maxder} applies when estimating 
$R_m$, $|m|=M$. If $s_0\in\N$, then $M=s_0$, and the bottom case in \eqref{maxder} applies when estimating $R_m$, $|m|=M$.

To prove \eqref{F-ineq-2}, we use \eqref{ge-Taylor-v2} with $M=\lfloor s_0\rfloor$. If $s_0\in\N$, then $l<\lfloor s_0\rfloor=s_0$ (by assumption, $l\le \lfloor s_0^-\rfloor$), and \eqref{ge-Taylor-v2}, \eqref{g-continuity} prove \eqref{F-ineq-2}. 

If $s_0\not\in\N$, we replace $R_m$ with $\tilde R_m$ in \eqref{ge-Taylor-v2} as this was done in the proof of Lemma~\ref{lem:gel}. As before, this does not invalidate the equality and extends its applicability to the case $l=M$. Note, however, that if $l=M$, then the term $g^{(l)}(y)$ on the right in \eqref{ge-Taylor-v2} comes not from the Taylor polynomial, but from the modification of the remainder. The desired assertion follows from \eqref{ge-Taylor-alt} and the modified \eqref{ge-Taylor-v2}.

If $\fs^\pm\equiv0$, the same argument as above applies with $s_0$ replaced by $s_1$. The only change is that there is no need to consider the case $s_1\in\N$.

\section{Proof of lemma \ref{key-sum-lemma}}\label{sec:lem3prf}

Throughout the proof, $c$ denotes various positive constants that can vary from one place to the next. To simplify notations, in this proof we drop the subscripts from $\bt_0$ and $s_0$:  $\bt=\bt_0$, $s=s_0$. By the choice of $y$ coordinates (see \eqref{y-coords}) and by \eqref{Theta-def}, $y_1=\bp\cdot y$ (recall that $|\bp|=1$).

Starting from \eqref{I-def-est} we estimate the difference between the terms with the subscript `+' on the left and on the right as follows 
\be\label{g-lt-est}\begin{split}
\bigl|a^+(\vy^j)&P_+^{s}(\vy^j)-a^+(y_0)(\vy^j_1-z_1)_+^s\bigr|\\
\le &\left|P_+^{s}(\vy^j)-(\vy^j_1-z_1)_+^s\right| |a^+(\vy^j)|
+|\vy^j_1-z_1|^s\left|a^+(\vy^j)-a^+(y_0)\right|,\\ 
\vy^j=&U^T(\e D j-\tilde y_0).
\end{split}
\ee
The following inequalities can be shown to hold. For all $q,r\in\br$ one has
\be\label{two-ineqs}\begin{split}
\left|(q+r)_\pm^s-q_\pm^s\right|\le & 2^{s-1}(|r|^s+s|q|^{s-1}|r|),\ s> 1,\\
\left|(q+r)_\pm^s-q_\pm^s\right|\le & |r|^s,\ 0<s\le 1.
\end{split}
\ee
Consider the top inequality. The case $q,q+r\le0$ is trivial. The cases $q+r\le 0\le q$ and $q\le 0\le q+r$ can be verified directly. By a change of variables and convexity, it is easily seen that the case $r<0< q$ follows from the case $q,r>0$. To prove the latter, divide by $q^s$ and set $x=r/q$. Both sides equal zero when $x=0$. Differentiating with respect to $x$, we see that the inequality is proven because $(1+x)^{s-1}\le 2^{s-1}(x^{s-1}+1)$ (consider $0<x\le 1$ and $x\ge 1$). The second inequality in \eqref{two-ineqs} is obvious.

The assumption $z\in\Gamma$ implies $z_1=\psi(z_\perp)$, so
\be\label{P-arg-diff}
P(\vy^j)=\vy^j_1-\psi(\vy^j_\perp)=\vy^j_1-z_1+(\psi(z_\perp)-\psi(\vy^j_\perp)).
\ee
Setting $q=\vy^j_1-z_1$ and $r=\psi(z_\perp)-\psi(\vy^j_\perp)$ in \eqref{two-ineqs} and using \eqref{P-arg-diff} and that $a^+(y)$ is bounded, we estimate the first term on the right in \eqref{g-lt-est} as follows
\be\label{ftr}\begin{split}
&\left|P_+^{s}(\vy^j)-(\vy^j_1-z_1)_+^s\right| |a^+(\vy^j)|\\
&\le c \left(|\psi(z_\perp)-\psi(\vy^j_\perp)|^s+
\begin{cases}
|\vy^j_1-z_1|^{s-1}|\psi(z_\perp)-\psi(\vy^j_\perp)|,& s> 1\\
0,&0<s\le 1
\end{cases}
\right).
\end{split}
\ee
Recall that in this lemma we assume that the amplitude of $\CB$ satisfies $\tilde B(y,\eta)\equiv\tilde B_0(y,\eta)$. By \eqref{B-lim}, the fact that the amplitude of $\CB$ is homogeneous in the frequency variable (and, therefore, the Schwartz kernel $K(y,w)$ of $\CB$ is homogeneous in $w$), and Assumption IK1,
\be\label{Bik-bnd}
|\CB \ik(u)|\leq c\left(1+|u|\right)^{-(\bt+n)},\ u\in\br^n.
\ee
Therefore, by \eqref{B-expl} and \eqref{I-def-est}, we have to estimate the following two sums
\be\label{J123}\begin{split}
J_1:=&\sum_{|j|\le O(1/\e)}\frac{|(\psi(z_\perp)-\psi(\vy^j_\perp))/\e|^s}{(1+|(y-\vy^j)/\e|)^{\bt+n}},\\
J_2:=&\sum_{|j|\le O(1/\e)}\frac{|(\vy_1^j-z_1)/\e|^{s-1}|(\psi(z_\perp)-\psi(\vy^j_\perp))/\e|}{(1+|(y-\vy^j)/\e|)^{\bt+n}}.
\end{split}
\ee
The second sum is required if $s>1$. Assumptions of the lemma imply 
\be\label{zpsi-est}
|\psi(z_\perp)-\psi(\vy^j_\perp)|\le |\psi'(y_\perp^*)||z_\perp-\vy^j_\perp|
\le c(\e^{1/2}+|z_\perp-\vy^j_\perp|)|z_\perp-\vy^j_\perp|
\ee
for some $c>0$. Here $y^*$ is some point on the line segment with the endpoints $z_\perp$, $\vy^j_\perp$, and we have used that $|\psi'(y^*)|\le c(|z_\perp|+|z_\perp-\vy^j_\perp|)$, which follows from $\psi'(y_0)=0$.

Let $m=m(z,\e)\in\mathbb Z^n$ be such that $|(z+U^T\tilde y_0)/\e-U^TDm|<c$. The dependence of $m$ on $z$ and $\e$ is omitted from notations. This implies
\be\label{zydist}\begin{split}
\max(|z_1-\vy^j_1|,|z_\perp-\vy^j_\perp|)&\le |z-\vy^j|\le c \e \left|\frac{z+U^T\tilde y_0}{\e}-U^TDm-U^T D (j-m)\right|\\
&\le c\e(c+|j-m|).
\end{split}
\ee
Also, using that $|y-z|=O(\e)$ gives
\be\label{yyjdist}
\left|\frac{y-\vy^j}\e\right|=\left|\frac{(y-z)+(z-\vy^j)}\e\right| \ge c|j-m|\text{ if } |j-m|\gg 1.
\ee
Substitute \eqref{zpsi-est} into the expression for $J_1$ in \eqref{J123}, shift the index $j\to j-m$, and use \eqref{zydist}, \eqref{yyjdist}:
\be\label{J123-st1}\begin{split}
J_1\le &c\sum_{|j|\le O(1/\e)}\frac{(\e^{1/2}+\e (c+|j|))^s(c+|j|)^s}{(1+c|j|)^{\bt+n}}+O(\e^{s/2}).
\end{split}
\ee
Here we have used that we can ignore any finite number of terms (their contribution is $O(\e^{s/2})$), and \eqref{yyjdist} applies to the remaining terms. This gives
\be\label{J123-st2}\begin{split}
J_1\le & c\sum_{0<|j|\le O(1/\e)}\frac{(\e^{1/2}+\e |j|)^s}{|j|^{\bt+n-s}}+O(\e^{s/2})\\
\le &c\int_1^{O(1/\e)}\frac{(\e^{1/2}+\e r)^s}{r^{\bt+1-s}}dr+O(\e^{s/2})=O(\e^{\min(\bt-s,s/2)}).
\end{split}
\ee

To estimate $J_2$, we use the same approach as in \eqref{zpsi-est} -- \eqref{J123-st2}:
\be\label{J2-st1}\begin{split}
J_2\le &c\sum_{|j|\le O(1/\e)}\frac{(\e^{1/2}+\e |j|)(c+|j|)^s}{(1+c|j|)^{\bt+n}}+O(\e^{1/2})\\
\le & c\sum_{0<|j|\le O(1/\e)}\frac{\e^{1/2}+\e |j|}{|j|^{\bt+n-s}}+O(\e^{1/2})\\
\le &c\int_1^{O(1/\e)}\frac{\e^{1/2}+\e r}{r^{\bt+1-s}}dr+O(\e^{1/2})=O(\e^{\min(\bt-s,1/2)})=O(\e^{1/2}).
\end{split}
\ee
Here we have used that $\bt-s\ge N/2\ge 1/2$. 

The second term on the right in \eqref{g-lt-est} is estimated as follows:
\be\label{sec-term}\begin{split}
&|\vy^j_1-z_1|^s\left|a^+(\vy^j)-a^+(y_0)\right|
\le |\vy^j_1-z_1|^s\left|(a^+(\vy^j)-a^+(z))+(a^+(z)-a^+(y_0))\right| \\
&\le c[\e(c+|j-m|)]^s(\e(c+|j-m|)+\e^{1/2}).
\end{split}
\ee
Shifting the $j$ index as before and estimating a finite number of terms by $O(\e^{1/2})$ gives an upper bound
\be\label{J3}
\sum_{0<|j|\le O(1/\e)}\frac{\e^{1/2}+\e|j|}{|j|^{\bt+n-s}}+O(\e^{1/2})
=O(\e^{1/2}).
\ee
The terms with the subscript $'-'$ in \eqref{I-def-est} are estimated analogously. Our argument proves \eqref{I-def-est} with $\CB$ instead of $\CB_0$ on the right. 

The left-hand side of \eqref{I-def-est} is bounded, because 
\be\label{Pest}\begin{split}
|P(\vy^j)|\le |\vy^j_1-z_1|+|\psi(z_\perp)-\psi(\vy^j_\perp)|
\le & c\e(c+|j-m|)(1+(\e^{1/2}+\e(c+|j-m|)))\\
\le & c\e(1+|j-m|)
\end{split}
\ee
by \eqref{P-arg-diff}, \eqref{zpsi-est}, \eqref{zydist}, and $|j|\le O(1/\e)$, and
\be\label{temp1}\begin{split}
\frac{|P(\vy^j)/\e|^s}{(1+|(y-\vy^j)/\e|)^{\bt+n}}
\le & c\sum_{0<|j|\le O(1/\e)}\frac1{|j|^{\bt+n-s}}+O(1)<\infty.
\end{split}
\ee

It is easy to see that
\be\label{bb0}\begin{split}
\e^{\bt}\left|\left(\CB \ik\left(\frac{\cdot-\vy^j}\e\right)\right)(y) -
\CB_0 \ik\left(\frac{y-\vy^j}{\e}\right)\right|\le c\frac{\e^{1/2}}{(1+|(y-\vy^j)/\e|)^{\bt+n}}.
\end{split}
\ee
This follows from $\ik\in C_0^{\lceil\bt_0^+\rceil}$, $|y-y_0|=O(\e^{1/2})$, and
\be\label{DelB-est}
|\pa_\eta^m(\tilde B_0(y,\eta)-\tilde B_0(y_0,\eta))|\le c_m|y-y_0||\eta|^{\bt-|m|},\ |\eta|\ge 1,\ m\in\N_0^n.
\ee
Together with \eqref{temp1} this implies that replacing $y$ with $y_0$ in the amplitude of the $\Psi$DO $\CB$ (i.e., replacing $\tilde B_0(y,\eta)$ with $\tilde B_0(y_0,\eta)$) introduces an error of the magnitude $O(\e^{1/2})$, and the lemma is proven.

\section{Proof of lemma \ref{lem:psi-incr}}\label{sec:delphi}

Pick some sufficiently large $J\gg 1$. Then, with $D_1:=U^TD$, 
\be\label{an-psi-pr-2}\begin{split}
\Mpsi(\check y+\Delta \check y,p)-\Mpsi(\check y,p)&= \sum_{j\in\mathbb Z^n} \bigl[\CB_0 \ik(\check y+\Delta \check y-D_1 j)-\CB_0 \ik(\check y-D_1 j)\bigr]\CA(\bp\cdot D_1 j-p)\\
&=\sum_{|j|\le J}(\cdot)+\sum_{|j|>J}(\cdot)=:J_1+J_2.
\end{split}
\ee
Because $\CB_0\in S^{\bt_0}(\br^n)$, Theorem 6.19 in \cite{Abels12} implies that $\CB_0:C_*^{\lceil\bt_0^+\rceil}\to C_*^a$, $a=\lceil\bt_0^+\rceil-\bt_0=1-\{\bt_0\}>0$, is continuous. Nonsmoothness of the symbol at the origin, which is not allowed by the assumptions of the theorem, is irrelevant. By assumption, $\ik\in C_0^{\lceil\bt_0^+\rceil}(\br^n)$, so $J_1=O(|\Delta \check y|^a)$. In the second term $J_2$, the arguments of $\CB_0\ik$ are bounded away from zero, and the factor in brackets is smooth. Moreover, using again that the Schwartz kernel $K(y,w)$ of $\CB_0$ is homogeneous in $w$, we have,
\be\label{estim-der}
|\pa_{u_l}\CB_0 \ik(u)|=O(|u|^{-(n+\bt_0+1)}),\ |u|\to\infty,\ 1\leq l\leq n.
\ee
Using the argument analogous to the one in \eqref{temp1}, we easily see that $J_2=O(|\Delta \check y|)$. This proves the first line in \eqref{an-psi-pr-1}.

The second line in \eqref{an-psi-pr-1} is proven analogously:
\be\label{an-psi-pr-3}\begin{split}
&\Mpsi(\check y,p+\Delta p)-\Mpsi(\check y,p)\\
&\quad= \sum_{j\in\mathbb Z^n} \CB_0 \ik(\check y-D_1 j)\bigl[\CA(\bp\cdot D_1 j-(p+\Delta p))-\CA(\bp\cdot D_1 j-p)\bigr]\\
&\quad=\sum_{|\bp\cdot D_1 j|\le J}(\cdot)+\sum_{|\bp\cdot D_1 j|>J}(\cdot)=:J_1+J_2.
\end{split}
\ee
Clearly, $\CA(q+\Delta p)-\CA(q)=O(|\Delta p|^{\min(s_0,1)})$ uniformly in $q$ confined to any bounded set. Using in addition that $\CB_0 \ik(u)$ is bounded and $\CB_0 \ik(u)=O(|u|^{-(n+\bt_0)})$ as $|u|\to\infty$, we get that $J_1=O(|\Delta p|^{\min(s_0,1)})$.

In $J_2$, the argument of $\CA$ is bounded away from zero. In view of $\CA'(q)=O(|q|^{s_0-1})$, $|q|\to\infty$, we finish the proof by noticing that
\be\label{j2-est-del}
|J_2| \leq O(|\Delta p|) \sum_{|j| > 0}\frac{|j|^{s_0-1}}{|j|^{n+\bt_0+1}}=O(|\Delta p|) .
\ee

The fact that both estimates are uniform with respect to $\check y$ and $p$ confined to bounded sets is obvious.

\section{Proof of lemma \ref{lem:pPdiff}}\label{sec:pPdiff}

As usual, $c$ denotes various positive constants that may have different values in different places. Recall that $\bt_0-s_0>0$. Set $k:=\lceil\bt_0\rceil$, $\nu:=k-\bt_0$. Thus, $0\le\nu< 1$, and $\nu=0$ if $\bt_0\in\mathbb N$. Similarly to \eqref{B-split-lot}, 
\be\label{B-split}
\CB =\CW_1 \pa_{y_1}^k+\CW_2,
\ee
for some $\CW_1\in S^{-\nu}(\vs)$, $\CW_2\in S^{-\infty}(\vs)$. 

\subsection{Proof in the case $\bt_0\not\in\N$}
Let $K(y,w)$ be the Schwartz kernel of $\CW_1$. Suppose, for example, that $P:=P(y)>0$. The case $P<0$ is completely analogous. Initially, as $\varkappa_2$ in Lemma~\ref{lem:pPdiff}, we can pick any constant that satisfies $\varkappa_2\ge 2\varkappa_1$, where $\varkappa_1$ is the same as in \eqref{F-ineq-1}. This implies that $P/2\ge \varkappa_1\e$. Later (see the beginning of the proof of Lemma~\ref{lem:aux}), we update the choice of $\varkappa_2$. Denote (cf. \eqref{bt-aux-est})
\be\label{Je-def}
J_\e(y):=(\CB g_\e)(y)-(\CB g)(y)=(\CB\Delta g_\e)(y).
\ee
Then
\be\label{J-e1}\begin{split}
J_\e(y)=&J_\e^{(1)}(y)+J_\e^{(2)}(y)+O(\e^{s_0}),\\
J_\e^{(1)}(y):=&\int_{|P(w)|\geq P/2}  K(y,y-w)\Delta g_\e^{(k)}(w)dw,\\
J_\e^{(2)}(y):=&\int_{|P(w)|\leq P/2} K(y,y-w)\Delta g_\e^{(k)}(w)dw.
\end{split}
\ee
The big-$O$ term in \eqref{J-e1} appears because of the $\Psi$DO $\CW_2$ in \eqref{B-split}, and the magnitude of the term follows from \eqref{F-ineq-2} with $l=0$. From \eqref{F-def} and \eqref{F-ineq-1} with $l=k$, \eqref{J-e1}, and \eqref{ker-est-lot} with $l=0$, it follows that
\be\label{J-etwo}\begin{split}
|J_\e^{(1)}(y)|\leq &c\e\int_{|P(w)|\geq P/2} \frac{|w_1-\psi(w_\perp)|^{s_0-1-k}}{|y-w|^{n-\nu}} dw\\
=&c\e\int_{|p|\geq P/2}\int \frac{|p|^{s_0-1-k}}{|([P-p]+\psi(y_\perp)-\psi(w_\perp),y_\perp-w_\perp)|^{n-\nu}}dw_\perp dp.
\end{split}
\ee
Hence, we obtain similarly to \eqref{J1-ethree-lot}
\be\label{J1-ethree}\begin{split}
|J_\e^{(1)}(y)|\leq &c\e\int\int_{|p|\geq P/2} \frac{|p|^{s_0-1-k}}{(|P-p|+|w_\perp|)^{n-\nu}} dpdw_\perp\\
\leq & c\e \int_{|p|\geq P/2} \frac{|p|^{s_0-1-k}}{|P-p|^{1-\nu}} dp=c\e P^{s_0-1-\bt_0}.
\end{split}
\ee

To estimate $J_\e^{(2)}(y)$, integrate by parts with respect to $w_1$ in \eqref{J-e1}:
\be\label{J2-ezero}\begin{split}
|J_\e^{(2)}(y)|\leq & c\left(J_k+\sum_{l=l_0}^{k-1} (J_l^-+J_l^+)\right),\
l_0:=\lfloor s_0^-\rfloor,\\
J_k:=&\int_{|P(w)|\leq P/2} \left|\pa_{w_1}^{k-l_0}K(y,y-w)\Delta g_\e^{(l_0)}(w)\right|dw,\\
J_l^\pm:=&\int_{\br^{n-1}} \left|\pa_{w_1}^{k-1-l}K(y,y-w)\Delta g_\e^{(l)}(w)\right|_{w=(\psi(w_\perp)\pm P/2,w_\perp)}dw_\perp.
\end{split}
\ee
By construction, $P/2 \ge \varkappa_1\e$. Using \eqref{F-ineq-1}, \eqref{F-ineq-2} with $l=l_0$ (both inequalities apply when $l=l_0=\lfloor s_0^-\rfloor$), and arguing similarly to \eqref{J-etwo}, \eqref{J1-ethree}, gives
\be\label{J2k-1}\begin{split}
J_k\leq & c \int_{|p|\leq \varkappa_1\e} \frac{\e^{s_0-l_0}}{|P-p|^{\bt_0+1-l_0}} dp+
c\e \int_{\varkappa_1\e \le |p|\leq P/2} \frac{|p|^{s_0-l_0-1}}{|P-p|^{\bt_0+1-l_0}} dp\\
\le & c\e^{s_0-l_0+1} P^{-(\bt_0+1-l_0)}+ c\e P^{s_0-1-\bt_0}\int_{\varkappa_1(\e/P) \le |\check p|\leq 1/2} \frac{|\check p|^{s_0-l_0-1}}{|1-\check p|^{\bt_0+1-l_0}} d\check p\\
\le & c\e^{s_0-l_0+1} P^{-(\bt_0+1-l_0)}+ c\e P^{s_0-1-\bt_0}\left(1+(\e/P)^{s_0-l_0}\right),
\end{split}
\ee
where we have used that $l_0<s_0$. Using again that $\e/P \le 1/(2\varkappa_1)$ gives $J_k\le c\e P^{s_0-1-\bt_0}$.

Next we estimate the boundary terms in \eqref{J2-ezero}. By \eqref{F-ineq-1} (using that $\lfloor s_0^-\rfloor=l_0\le l\le k-1$) and \eqref{ker-est-lot}, 
\be\label{Jb-est-v1}\begin{split}
J_l^\pm\leq c\e\int_{\br^{n-1}} \frac{|w_1-\psi(w_\perp)|^{s_0-1-l}}{|y-w|^{n+\bt_0-1-l}} dw_\perp,\ w_1=\psi(w_\perp)\pm P/2.
\end{split}
\ee
Appealing to \eqref{denom-est} gives
\be\label{Jb-est-v2}\begin{split}
J_l^\pm \leq c\e P^{s_0-1-l}\int_{\br^{n-1}} \frac{dw_\perp}{(P\pm(P/2)+|w_\perp|)^{n+\bt_0-l-1}}=c\e P^{s_0-1-\bt_0},
\end{split}
\ee
which finishes the proof. As easily checked, the integral in \eqref{Jb-est-v2} converges because $l\le k-1 <\bt_0$.

\subsection{Proof in the case $\bt_0\in\N$} Suppose now $\bt_0\in\N$, i.e. $k=\bt_0$ and $\nu=0$. All the terms that do not involve integration over a neighborhood of the set $\{w\in\vs:\,P(w)=P\}$ are estimated the same way as before. For example, estimation of $J_\e^{(2)}(y)$ is completely analogous to \eqref{J2-ezero}--\eqref{Jb-est-v2}, and we obtain the same bound $|J_\e^{(2)}(y)|\le c\e P^{s_0-1-\bt_0}$. Estimating of $J_\e^{(1)}$ is much more involved now, because the singularity at $P(w)=P$ is no longer integrable. We have with some $c_1>0$, which is to be selected later:
\be\label{J-e1-parts}\begin{split}
J_\e^{(1)}(y)=&J_\e^{(1a)}(y)+J_\e^{(1b)}(y)+J_\e^{(1c)}(y),\\
J_\e^{(1a)}(y):=&\int_{P/2\leq P(w) \leq P-c_1\e}  K(y,y-w)\Delta g_\e^{(\bt_0)}(w)dw,\\
J_\e^{(1b)}(y):=&\int_{P-c_1\e\leq P(w) \leq P+c_1\e}  K(y,y-w)\Delta g_\e^{(\bt_0)}(w)dw,\\
J_\e^{(1c)}(y):=&\int_{P+c_1\e\leq P(w)}  K(y,y-w)\Delta g_\e^{(\bt_0)}(w)dw.
\end{split}
\ee
We do not estimate the integral $\int_{P(w)\le P/2} (\cdot)dw$, because the domain of integration is bounded away from the set $\{w\in\vs:\,P(w)=P\}$, and this integral admits the same bound as in the previous subsection (cf. \eqref{J1-ethree}). Similarly to \eqref{J1-ethree}, by \eqref{F-ineq-1} with $l=\bt_0$,
\be\label{Je1a}\begin{split}
|J_\e^{(1a)}(y)|\leq  & c\e \int_{P/2\le p\le P-c_1\e } \frac{p^{s_0-1-\bt_0}}{P-p} dp=c\e P^{s_0-1-\bt_0}\ln(P/\e),\\
|J_\e^{(1c)}(y)|\leq  & c\e \int_{P+c_1\e \le p} \frac{p^{s_0-1-\bt_0}}{p-P} dp=c\e P^{s_0-1-\bt_0}\ln(P/\e).
\end{split}
\ee
The term $J_\e^{(1b)}$ is split further as follows:
\be\label{Je1b-parts}\begin{split}
J_\e^{(1b)}(y)=&J_\e^{(1b1)}(y)+J_\e^{(1b2)}(y)+J_\e^{(1b3)}(y),\\
J_\e^{(1b1)}(y):=&\int_{\substack{|P-P(w)| \leq c_1\e\\ |y_\perp-w_\perp|\ge c_1P}}  K(y,y-w)\Delta g_\e^{(\bt_0)}(w)dw,\\
J_\e^{(1b2)}(y):=&\int_{\substack{|P-P(w)| \leq c_1\e\\ |y_\perp-w_\perp|\le c_1P}}  K(y,y-w)(\Delta g_\e^{(\bt_0)}(w)-\Delta g_\e^{(\bt_0)}(y))dw,\\
J_\e^{(1b3)}(y):=&\Delta g_\e^{(\bt_0)}(y)I,\ I:=\int_{\substack{|P-P(w)| \leq c_1\e\\ |y_\perp-w_\perp|\le c_1 P}} K(y,y-w)dw.
\end{split}
\ee
Similarly to \eqref{J1-ethree}, by \eqref{F-ineq-1} with $l=\bt_0$,
\be\label{J1b1est}\begin{split}
|J_\e^{(1b1)}(y)|\leq &c\e\int_{|w_\perp|\geq c_1P}\int_{|P-p| \leq c_1\e} \frac{p^{s_0-1-\bt_0}}{(|P-p|+|w_\perp|)^n} dpdw_\perp \leq c\e^2 P^{s_0-2-\bt_0}.
\end{split}
\ee
The second part is estimated by rearranging the $\Delta g$ terms:
\be\label{J1bdest}\begin{split}
J_\e^{(1b2)}(y):=\int_{\substack{|P-P(w)| \leq c_1\e\\ |y_\perp-w_\perp|\le c_1P}}  K(y,y-w)\bigl[&(g_\e^{(\bt_0)}(w)-g_\e^{(\bt_0)}(y))\\
&-(g^{(\bt_0)}(w)-g^{(\bt_0)}(y))\bigr]dw.
\end{split}
\ee

\begin{lemma}\label{lem:aux} There exist $c,c_1,\varkappa_2>0$ so that
\be\label{delge-ineqs}\begin{split}
&|g^{(\bt_0)}(w)-g^{(\bt_0)}(y)|\le  c|w-y|P(y)^{s_0-1-\bt_0},\\
&|g_\e^{(\bt_0)}(w)-g_\e^{(\bt_0)}(y)|\le c|w-y|P(y)^{s_0-1-\bt_0},\\
&\text{if}\quad |P(y)-P(w)| \leq  c_1\e,\ |y_\perp-w_\perp|\le c_1P(y),\ P(y)>\varkappa_2\e.
\end{split}
\ee
\end{lemma}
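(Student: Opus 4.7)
\textbf{Proof proposal for Lemma~\ref{lem:aux}.}

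The first step is geometric: establish that the straight-line segment from $y$ to $w$ stays in the region $\{z\in\vs:P(z)\ge P(y)/2\}$ once $c_1$ is small and $\varkappa_2$ is large. Write $z(t):=(1-t)y+tw$, $t\in[0,1]$. Since $P(u)=u_1-\psi(u_\perp)$,
\be\label{aux:P-seg}
P(z(t))=(1-t)P(y)+tP(w)+R(t),\quad R(t):=(1-t)\psi(y_\perp)+t\psi(w_\perp)-\psi(z_\perp(t)).
\ee
Smoothness of $\psi$ gives $|R(t)|\le c\|\psi''\|_\infty|y_\perp-w_\perp|^2\le cc_1^2 P(y)^2\le cc_1^2(\sup_\vs P)P(y)$, while the linear part is bounded below by $P(y)-c_1\e\ge P(y)(1-c_1/\varkappa_2)$. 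Choosing $c_1$ small (with $\vs$ also small) and then $\varkappa_2$ large gives $P(z(t))\ge P(y)/2$ for all $t\in[0,1]$. I will additionally take $\varkappa_2\ge 2\varkappa_1$, so that $P(z(t))\ge \varkappa_1\e$ throughout the segment, which is the threshold needed to invoke Lemma~\ref{lem:gel}. Note also that $|w-y|\le |\Delta_1|+|\Delta_\perp|\le c_1\e+O(|y_\perp-w_\perp|)+c_1P(y)\le cP(y)$.

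For the first inequality, apply Lemma~\ref{lem:conorm} on the segment. Since $\kappa\ge0$ gives $\bt_0+1>s_0$, every partial derivative of order $\bt_0+1$ satisfies $|\pa^m g(z(t))|\le c|P(z(t))|^{s_0-\bt_0-1}\le c' P(y)^{s_0-\bt_0-1}$ (the second inequality uses $s_0-\bt_0-1<0$ and $P(z(t))\ge P(y)/2$). Since $g^{(\bt_0)}=\pa_{y_1}^{\bt_0}g$, the mean value theorem along $z(t)$ gives
\be\label{aux:mvt-cont}
|g^{(\bt_0)}(w)-g^{(\bt_0)}(y)|\le |w-y|\sup_{t\in[0,1]}|\nabla g^{(\bt_0)}(z(t))|\le c|w-y|P(y)^{s_0-\bt_0-1},
\ee
which is the first bound.

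For the second inequality, the natural approach is again the MVT, which forces me to control $|\nabla g_\e^{(\bt_0)}(z)|$ on the segment, i.e.\ mixed partial derivatives of $g_\e$ of total order $\bt_0+1$. This is the main obstacle, because Lemma~\ref{lem:gel} as stated controls only pure $y_1$-derivatives. I would handle this by observing that the proof of Lemma~\ref{lem:gel} extends verbatim to any multi-index $m$ with $|m|\le\lceil\bt_0^+\rceil=\bt_0+1$: the Taylor-expansion identity \eqref{ge-Taylor} reads
\be\label{aux:taylor-mixed}
\pa^m g_\e(z)=\e^{-|m|}\sum_j(\pa^m_{\check y}\ik)\left(\tfrac{z-\vy^j}{\e}\right)\sum_{|\al|=M}R_\al(\vy^j,z)(\vy^j-z)^{\al},
\ee
with $M=|m|=\bt_0+1$, provided that for each $|m'|\le\bt_0$ one has $\sum_j\pa^m\ik(u-j)j^{m'}=\pa^m u^{m'}=0$, which is precisely the $|m'|\le\lceil\bt_0\rceil$ exactness of $\ik$ in IK2 differentiated $|m|=\bt_0+1$ times. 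Following the argument of Appendix~B.3 word for word, this yields $|\pa^m g_\e(z)|\le cP(z)^{s_0-\bt_0-1}$ for $|P(z)|\ge\varkappa_1\e$ and $|m|=\bt_0+1$. Applying this on the segment (where $P(z(t))\ge\varkappa_1\e$ by the first paragraph) and using the MVT exactly as in \eqref{aux:mvt-cont} gives
\be\label{aux:mvt-disc}
|g_\e^{(\bt_0)}(w)-g_\e^{(\bt_0)}(y)|\le |w-y|\sup_{t\in[0,1]}|\nabla g_\e^{(\bt_0)}(z(t))|\le c|w-y|P(y)^{s_0-\bt_0-1},
\ee
completing the proof.
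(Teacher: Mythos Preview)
Your proof is correct. The geometric setup in the first paragraph and the first inequality via the mean value theorem match the paper's argument essentially verbatim.

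For the second inequality you take a different route. The paper does \emph{not} extend Lemma~\ref{lem:gel} to mixed derivatives; instead it splits according to the size of $|w-y|$ relative to $\e$. When $|w-y|\ge c_2\e$ it writes
\[
|g_\e^{(\bt_0)}(w)-g_\e^{(\bt_0)}(y)|\le |\Delta g_\e^{(\bt_0)}(w)|+|g^{(\bt_0)}(w)-g^{(\bt_0)}(y)|+|\Delta g_\e^{(\bt_0)}(y)|,
\]
invokes Lemma~\ref{lem:delta} (estimate \eqref{F-ineq-1} with $l=\bt_0$) on the two outer terms, uses the already-proven first inequality on the middle term, and absorbs the resulting factor of $\e$ into $|w-y|$. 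When $|w-y|<c_2\e$ it Taylor-expands $g(\vy^j)$ about $y$ to order $\bt_0$; by exactness the polynomial contributions to $g_\e^{(\bt_0)}(w)$ and $g_\e^{(\bt_0)}(y)$ coincide, so only the remainder survives in the difference, and the factor $|w-y|$ comes from the Lipschitz continuity of $\pa_{\check y_1}^{\bt_0}\ik$ (available since $\ik\in C_0^{\bt_0+1}$).

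Your approach---extending the Taylor-subtraction identity \eqref{ge-Taylor} to an arbitrary multi-index of order $\bt_0+1$ and then applying the mean value theorem once along the segment---is legitimate: IK2 differentiated $\bt_0+1$ times does annihilate every monomial of degree $\le\bt_0$, and IK1 supplies the needed $C_0^{\bt_0+1}$ regularity of $\ik$. It is more uniform and avoids the case split; the paper's version has the minor advantage of not having to restate Lemma~\ref{lem:gel} for mixed derivatives, instead reusing Lemma~\ref{lem:delta} off the shelf.
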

\begin{proof} We begin by updating the choice of $\varkappa_2$. Select $\varkappa_2\ge 2\varkappa_1$ so that $P\ge \varkappa_2\e$ implies
\be\label{C3-select}
P(v)\ge c P \text{ for any }v\in\vs, |v-y|\le \e d_\ik,\ d_\ik:=\text{diam}(\text{supp}\,\ik),
\ee
for some $c>0$. 

Next we select $c_1$. First, pick any $c_1$ so that $0<c_1\le \varkappa_1$. This ensures that $P(w)\ge P-|P-P(w)|\ge \varkappa_1\e$, and \eqref{F-ineq-1} can be used to estimate the derivatives of $\Delta g_\e(w)$.
Let $c_\psi:=\max_{v\in\vs}|\psi'(v)|$. Our assumptions imply 
\be
|y_1-w_1|\le |P-P(w)|+|\psi(y_\perp)-\psi(w_\perp)|\le c_1(\e+c_\psi P). 
\ee
Let $v$ be any point on the line segment with the endpoints $w$ and $y$, i.e. $v=y+\la(w-y)$, $0\le \la\le 1$. Then 
\be\label{P-bnd}
P(v)\ge P-(|y_1-w_1|+|\psi(v_\perp)-\psi(y_\perp)|)\ge P-c_1(\e+c_\psi P)-c_\psi c_1 P.
\ee
Reducing $c_1>0$ even further, we can ensure that $P(v)\ge cP$ for some $c>0$. 
This is the value of $c_1$ that is assumed starting from \eqref{J-e1-parts}. In the rest of the proof we assume that $w,y\in\vs$ satisfy the inequalities on the last line in \eqref{delge-ineqs} with the constants $c_1$ and $\varkappa_2$ that we have just selected.
 
From \eqref{Psic-ass-g} with $|m|=\bt_0+1$, 
\be\label{delg-bnd}
|g^{(\bt_0)}(w)-g^{(\bt_0)}(y)|\le |w-y|\max_{0\le \la\le 1} |\nabla g^{(\bt_0)}(y+\la(w-y))|
\le c|w-y|P^{s_0-1-\bt_0}
\ee
for some $c>0$.

To prove the second line in \eqref{delge-ineqs}, find $c_{2,3}>0$ such that
\be\label{c2-select}
v\in\vs,|v-y|\le \e(c_2+d_\ik) \text{ implies } P(v)\ge c_3 P.
\ee
By \eqref{C3-select}, $c_{2,3}$ with the required properties do exist.

Now, assume first that $|w-y|\ge c_2\e$, where $c_2$ is the same as in \eqref{c2-select}. Clearly,
\be\label{delge-triangle}\begin{split}
|g_\e^{(\bt_0)}(w)-g_\e^{(\bt_0)}(y)|\le |\Delta g_\e^{(\bt_0)}(w)|&+|g^{(\bt_0)}(w)-g^{(\bt_0)}(y)|+|\Delta g_\e^{(\bt_0)}(y)|.
\end{split}
\ee
By construction, \eqref{F-ineq-1} applies to $\Delta g_\e^{(\bt_0)}(w)$. Applying \eqref{F-ineq-1} to the first and third terms on the right in \eqref{delge-triangle}, and \eqref{delg-bnd} -- to the second term on the right, gives
\be\label{delge-triangle-cont1}\begin{split}
|g_\e^{(\bt_0)}(w)-g_\e^{(\bt_0)}(y)|&\le c\e P(w)^{s_0-1-\bt_0}+c|w-y|P^{s_0-1-\bt_0}+c\e P^{s_0-1-\bt_0}\\
&\le c|w-y|P^{s_0-1-\bt_0},
\end{split}
\ee
because $\e\le (1/c_2)|w-y|$ and 
\be\label{PwP-ineq}
P(w)\ge P-|P-P(w)|\ge P(1-c_1(\e/P))\ge P(1-c_1/(2\varkappa_1))\ge P/2.
\ee

If $|w-y|\le c_2\e$, we argue similarly to \eqref{ge-Taylor-v2}:
\be\label{delge-Taylor}\begin{split}
g_\e^{(\bt_0)}(w)-g_\e^{(\bt_0)}(y)
&=\e^{-\bt_0}\sum_j\left((\pa_{\check y_1}^{\bt_0}\ik)\left(\frac{w-\vy^j}\e\right)-(\pa_{\check y_1}^{\bt_0}\ik)\left(\frac{y-\vy^j}\e\right)\right)\\
&\hspace{1cm}\times\sum_{|m|=\bt_0+1}R_m(\vy^j,y)(\vy^j-y)^{m}\\
|R_m(\vy^j,y)|&\le \frac1{(\bt_0+1)!}\max_{|m'|=\bt_0+1,|y-v|\le (c_2+d_\ik)\e}|\pa_v^{m'} g(v)|.
\end{split}
\ee
By \eqref{c2-select}, \eqref{Psic-ass-g} implies $|R_m(\vy^j,y)|\le cP^{s_0-1-\bt_0}$. The assertion follows because $\ik\in C_0^{\lceil\bt_0^+\rceil}(\br^n)$.
\end{proof}

Applying \eqref{ker-est-lot} with $\nu=l=0$ and \eqref{delge-ineqs} in \eqref{J1bdest} yields (cf. \eqref{J1-lot}--\eqref{J1-ethree-lot})
\be\label{J1bdest-st2}\begin{split}
|J_\e^{(1b2)}(y)|\le &cP^{s_0-1-\bt_0}\int_{\substack{|P-P(w)| \leq c_1\e\\ |y_\perp-w_\perp|\le c_1 P}}  \frac{|w-y|}{|y-w|^n} dw\\
\leq &cP^{s_0-1-\bt_0}\int_{|w_\perp|\le c_1P}\int_{|P-p| \leq c_1\e} \frac{1}{(|P-p|+|w_\perp|)^{n-1}} dpdw_\perp\\
\leq & cP^{s_0-1-\bt_0}\int_{|w_\perp|\le c_1 P} \int_{|p| \leq c_1\e} \frac{dpdw_\perp}{(|p|+|w_\perp|)^{n-1}} =c\e P^{s_0-1-\bt_0}\ln\biggl(\frac{P}\e\biggr).
\end{split}
\ee
The final major step is to estimate the integral in the definition of $J_\e^{(1b3)}$. 
\be\begin{split}\label{I-sto}
I=& \int_{|y_\perp-w_\perp|\le c_1 P}\int_{|(y_1-w_1)-(\psi(y_\perp)-\psi(w_\perp)|\le c_1\e}K(y,(y_1-w_1,y_\perp-w_\perp)) dw_1dw_\perp\\
=&\int_{|v_\perp|\le c_1 P}\int_{-c_1 \e}^{c_1 \e}K(y,(v_1+h(v_\perp),v_\perp)) dv_1dv_\perp,\ h(v_\perp):=\psi(y_\perp)-\psi(y_\perp-v_\perp).
\end{split}
\ee
Let $\tilde W(y,\eta)$ be the amplitude of $\CW_1\in S^0(\vs)$ in \eqref{B-split}. Then
\be\begin{split}\label{I-stt}
I=&c \int_{|v_\perp|\le c_1P}\int_{-c_1\e}^{c_1\e}\int_\Omega \tilde W(y,\eta) e^{-i(\eta_1 (v_1+h(v_\perp))+\eta_\perp v_\perp)}d\eta dv_1dv_\perp\\
=&c \int_\Omega \frac{\sin(c_1\e\eta_1)}{\eta_1}\int_{|v_\perp|\le c_1 P}\tilde W(y,\eta) e^{-i(\eta_1 h(v_\perp)+\eta_\perp v_\perp)}dv_\perp d\eta.
\end{split}
\ee
Our goal is to show that $I$ is uniformly bounded for all $\e>0$ sufficiently small and $P$ that satisfy $P/\e \ge \varkappa_2>0$. We can select $\CW_{1,2}$ in \eqref{B-split} so that the conic supports of their amplitudes are contained in that of $\CB$.
First, consider only the principal symbol of $\CW_1$, which we denote $\tilde W_0(y,\eta)$. 
We can assume that $\tilde W_0(y,\eta)\equiv0$ if $\eta\not\in\Omega$, $\eta\not=0$, where $\Omega\subset\br^n\setminus\{0\}$ is a small conic neighborhood of $\bp\cup(-\bp)$. This set is used in \eqref{I-stt}. The corresponding value of $I$, which is obtained by replacing $\tilde W(y,\eta)$ with $\tilde W_0(y,\eta)$ in \eqref{I-stt}, is denoted $I_0$. 

As $\tilde W_0(y,\eta)$ is positively homogeneous of degree zero in $\eta$, set
\be\label{Wpm}
\tilde W^\pm(y,u):=\tilde W(y,\eta_1(1,u))=\tilde W_0(y,\pm(1,u)),\ u=\eta_\perp/\eta_1\in\Omega_\perp,
\ee
where $\Omega_\perp$ is a small neighborhood of the origin in $\br^{n-1}$: $\Omega_\perp:=\{u\in \br^{n-1}: u=\eta_\perp/\eta_1,\eta\in\Omega\}$. The sign $'+'$ is selected if $\eta_1>0$, and $'-'$ - otherwise. By the properties of $\CW$, $\tilde W_{\pm}(y,\cdot)\in\coi(\Omega_\perp)$. Thus, \eqref{I-stt} implies
\be\begin{split}\label{I-three}
I_0=&c \int_{\br}\frac{\sin(c_1\e\eta_1)}{\eta_1}  \int_{|v_\perp|\le c_1P}\int_{\Omega_\perp}\tilde W^\pm(y,u) e^{-i\eta_1 u v_\perp}du e^{-i\eta_1 h(v_\perp)} dv_\perp |\eta_1|^{n-1}d\eta_1\\
=&c \int_{\br}\frac{\sin(c_1\e\eta_1)}{\eta_1}  \int_{|v_\perp|\le c_1P}W^\pm(y,\eta_1v_\perp) e^{-i\eta_1 h(v_\perp)} dv_\perp |\eta_1|^{n-1}d\eta_1\\
=&c \int_{\br}\frac{\sin(c_1\e\eta_1)}{\eta_1}  \int_{|w_\perp|\le c_1P|\eta_1|}W^\pm(y,w_\perp) e^{-i\eta_1 h(w_\perp/\eta_1)} dw_\perp d\eta_1\\
=&c \int_{\br}\frac{\sin(\la)}{\la}  \int_{|w_\perp|\le \frac P\e|\la|}W^\pm(y,w_\perp) \exp\left(-i\la \frac{h(c_1\e w_\perp/\la)}{c_1\e}\right) dw_\perp d\la,
\end{split}
\ee
where $W^\pm(y,w_\perp)$ is the inverse Fourier transform of $\tilde W^\pm(y,u)$ with respect to $u$. Since $P/\e$ is bounded away from zero, $h(0)=0$, and $W^\pm(y,w_\perp)$ is smooth and rapidly decreasing as a function of $w_\perp$, we have by the dominated convergence theorem
\be\begin{split}\label{I-four}
&\int_{|w_\perp|\le \frac P\e |\la|}W^\pm(y,w_\perp) \exp\left(-i\la \frac{h(c_1\e w_\perp/\la)}{c_1\e}\right) dw_\perp\\
& \to \int_{\br^{n-1}}W^\pm(y,w_\perp)e^{-i h'(0)\cdot w_\perp}dw_\perp=\tilde W^\pm(y,-\psi'(y_\perp))=\tilde W_0(y,\pm(1,-\psi'(y_\perp)))
\end{split}
\ee
as $\la\to\pm\infty$, and convergence is uniform with respect to $\e$ and $P$ that satisfy $P/\e\ge \varkappa_2$. As is seen, $\bma 1\\ -\psi'(y_\perp) \ema$ is a vector normal to $\Gamma$ at the point $\bma \psi(y_\perp)\\ y_\perp\ema$.

The remainder term in \eqref{I-four} is bounded by the expression
\be\begin{split}\label{I-four-bnd}
&\int_{|w_\perp|\le \frac P\e |\la|}|W^\pm(y,w_\perp)| \left|\exp\left(-i \frac{\la h(c_1\e w_\perp/\la)}{c_1\e}+ih'(0)\cdot w_\perp\right)-1\right| dw_\perp\\
&\qquad+\int_{|w_\perp|\ge \frac P\e |\la|}|W^\pm(y,w_\perp)| dw_\perp
\\
&\le c \frac{\e }{|\la|}\int_{\br^{n-1}}|W^\pm(y,w_\perp)| |w_\perp|^2 dw_\perp
+\int_{|w_\perp|\ge \varkappa_2 |\la|}|W^\pm(y,w_\perp)| dw_\perp=O(|\la|^{-1}).
\end{split}
\ee
Due to $\tilde W_{\pm}(y,\cdot)\in\coi(\Omega_\perp)$, the big-$O$ term on the right-hand side of \eqref{I-four-bnd} is uniform with respect to $y\in\vs$ and $0<\e\le1$. Hence 
\be\begin{split}\label{I-five}
I_0= &c \int_{\br}\frac{\sin(\la)}{\la}  \tilde W_0(y,\la(1,-\psi'(y_\perp))) d\la+O(1)\\
=&c \frac{\pi}2 \left[\tilde W_0(y,(1,-\psi'(y_\perp)))+\tilde W_0(y,-(1,-\psi'(y_\perp)))\right]+O(1),
\end{split}
\ee
where $O(1)$ is uniform with respect to $y\in\vs$ as well, which proves that $I_0$ is uniformly bounded. 

The remaining term $\Delta I=I-I_0$ comes from the subprincipal terms of the amplitude $\Delta \tilde W=\tilde W-\tilde W_0$. The corresponding $\Psi$DO is in $S^{-\nu}(\vs)$ for some $\nu>0$, so its Schwartz kernel $\Delta K(y,w)$ is smooth as long as $w\not=0$ and absolutely integrable at $w=0$. It is now obvious that   $\Delta I$ is bounded as well.

By Lemma \ref{lem:delta} (use \eqref{F-ineq-1} with $l=k=\bt_0$), $|\Delta g_\e^{(\bt_0)}(y)|\le c\e P^{s_0-1-\bt_0}$ if $P\ge \varkappa_1\e$, combining with \eqref{Je1b-parts} proves that $|J_\e^{(1b2)}(y)|\le c\e P^{s_0-1-\bt_0}$. By \eqref{Je1b-parts}, \eqref{J1b1est}, \eqref{J1bdest-st2}, we conclude $|J_\e^{(1b)}(y)|\le c\e P^{s_0-1-\bt_0}\ln(P/\e)$. Combining with \eqref{J-e1-parts} and \eqref{Je1a} we finish the proof.


\section{Some computations involving determinants}\label{sec:dets}

\subsection{Proof of Lemma~\ref{lem:detQ}}
We begin by proving \eqref{x1y1-ders}. 
Differentiating (cf. \eqref{X-ids}):
\be\label{x1pi-repeat}
X_1(\Phi^\tp(t,y),y)\equiv \Phi_1(t,y) 
\ee
with respect to $t$ and using that $\det \Phi^{\tp}_t\not=0$, $\pa\Phi_1/\pa t=0$ gives $\pa X_1/\pa\xt=0$. Differentiating \eqref{x1pi-repeat} with respect to $y_1$ gives $\pa X_1/\pa y_1=\pa \Phi_1/\pa y_1$. Also, from \eqref{y-coords} and \eqref{xcoords}, $|d\Psi|\pa \Phi_1/\pa y_1=1$, i.e. $\pa \Phi_1/\pa y_1>0$. 

We find $y=Y(\yt,x)$ by solving $x=\Phi(t,y)$ for $t$ and $\yo$. Since $\pa\Phi_1/\pa t=0$ and $\pa\Phi_1/\pa y_\perp=0$, differentiating $x_1\equiv\Phi_1(t(\yt,x),(Y^\os(\yt,x),\yt))$ with respect to $x_1$ gives $1=(\pa\Phi_1/\pa y_1)(\pa Y_1/\pa x_1$). Differentiating the same identity with respect to $x_\perp$ gives $0=({\pa \Phi_1}/{\pa y_1})({\pa Y_1}/{\pa x_\perp})$, and all the statements in \eqref{x1y1-ders} are proven.

By \eqref{YZ-diff},
\be\label{Q-compute}
|\det Q|^{1/2}=\frac{|\det C|}{|\det (\Psi\circ\Phi)_{tt}|^{1/2}}.
\ee
Further, by \eqref{xcoords}, \eqref{Bolker-matr}, \eqref{M-alt} and \eqref{M-deriv},
\be\label{Mdets}\begin{split}
|\det C|=\frac{|\det M|}{|\det M_{11}|}
=|d\Psi|^N\left|\det \Phi^\tp_t\right|^{-1}{\left|\det \bma \Phi^\tp_t & \Phi^\tp_{\yt} \\
(\Phi_1)_{tt} & (\Phi_1)_{t\yt} \ema\right|}.
\end{split}
\ee
Differentiating \eqref{x1pi-repeat} two times 
gives
\be\label{phi1tt}
(\Phi_1)_{tt}=L\Phi^\tp_t,\quad L:=(\Phi^\tp_t)^T (X_1)_{\xt\xt}.
\ee
Here we have used that $\pa X_1/\pa \xt=0$. 
Similarly,
\be\label{phi1ty}
(\Phi_1)_{ty}=L\Phi^\tp_{\yt}+(\Phi^\tp_t)^T (X_1)_{\xt\yt}.
\ee
This simplifies the last determinant in \eqref{Mdets}:
\be\label{last-det}\begin{split}
\det \bma \Phi^\tp_t & \Phi^\tp_{\yt} \\
(\Phi_1)_{tt} & (\Phi_1)_{t\yt} \ema
=\det \Phi_t^{\tp}\det \bma I & \Phi^\tp_{\yt} \\
L & L\Phi^\tp_{\yt}+(\Phi^\tp_t)^T (X_1)_{\xt\yt} \ema\\
=\det \Phi_t^{\tp}\det \bma I & \Phi^\tp_{\yt} \\
0 & (\Phi^\tp_t)^T (X_1)_{\xt\yt} \ema
=(\det \Phi_t^{\tp})^2\det\frac{\pa^2 X_1}{\pa \xt\pa\yt},
\end{split}
\ee
so 
\be\label{detC}\begin{split}
|\det C|
=\left|\frac{\pa X_1}{\pa y_1}\right|^{-N}|\det \Phi_t^{\tp}|\left|\det\frac{\pa^2 X_1}{\pa \xt\pa\yt}\right|.
\end{split}
\ee
Combining everything and using \eqref{jacob-sm} gives \eqref{Q-final}.


\subsection{Proof of Lemma~\ref{lem:chi}}
Transform the combination of the determinants in \eqref{dets-only-v2} (cf. \eqref{YZ-diff})
\be\label{dets-only}\begin{split}
\chi=\frac{(\det G^\s )^{1/2}}{|\det C|}.
\end{split}
\ee
By \eqref{xcoords} and \eqref{gram-1st}, 
\be\label{st1}
G_{jk}^\s=\frac{\pa \Phi^{\tp}(t,y)}{\pa t_j}\cdot\frac{\pa \Phi^{\tp}(t,y)}{\pa t_k},
\ee
and $(\det G^\s)^{1/2}=|\det \Phi^\tp_t|$. Using \eqref{detC} establishes \eqref{dets-only-v2}.

\bibliographystyle{plain}
\bibliography{bibliogr_A-K,bibliogr_L-Z}
\end{document}